\documentclass[a4paper,11pt]{article}
\usepackage{fullpage}
\usepackage{amsfonts}
\usepackage{amssymb,amsthm,amsmath,color}
\usepackage{array}
\usepackage{mathrsfs}
\usepackage{dsfont}
\usepackage{lmodern}
\usepackage{hyperref}
\usepackage{graphicx}
\usepackage[utf8]{inputenc}  
\usepackage[T1]{fontenc} 
\usepackage{stmaryrd}  
\numberwithin{equation}{section}

\DeclareMathAlphabet{\mathbbo}{U}{bbold}{m}{n}

\newtheorem{theorem}{Th\'eor\`eme}[section]
\newtheorem{lemma}[theorem]{Lemme}
\newtheorem{Remarque}[theorem]{Remarque}

\newtheorem{cor}[theorem]{Corollaire}

%\renewcommand{\bibname}{Bibliographie}
%\renewcommand{\chaptername}{}
%\csname @addtoreset\endcsname{section}{part}
%\csname @addtoreset\endcsname{chapter}{part}
%\def\theequation{\arabic{equation}}
%%%%%%%%%%%%%

\date{}
\title{Ensembles de petite somme}

\author{RIBLET Robin}
\begin{document}
\maketitle
\begin{abstract}
Ruzsa a démontré une minoration précise de la mesure de la somme de deux ensembles bornés de réels $A$ et $B$ faisant intervenir le ratio $\lambda(A)/\lambda(B)$. De Roton a établi un résultat de structure à propos des ensembles critiques de cette minoration. Ici, nous prouvons une généralisation du travail de de Roton en établissant un résultat dans un voisinage du cas d'égalité. 
\end{abstract}

\section{Introduction et résultat}
Si $A$ et $B$ sont deux ensembles bornés non vides de réels, il est bien connu (cf. \cite{lambda(A+B)>A+B}) que
$$\lambda(A+B)\geqslant\lambda(A)+\lambda(B).$$
Si cette inégalité est optimale pour certains ensembles $A$ et $B$ (si ce sont des intervalles par exemple), il est en revanche possible d'être plus précis en toute généralité en faisant intervenir le ratio $\lambda(A)/\lambda(B)$ et le diamètre de $B$. 

Dans la suite, $A$ et $B$ seront deux ensembles fermés bornés de réels de mesures non nulles. On notera $D_B=\sup B-\inf B$ le diamètre de $B$ et on désignera par $(K,\delta)$ l'unique couple défini par 
\begin{equation}\label{def_(K,delta)}
\begin{cases}
K\in\mathbb{N^*} \ \rm{ et } \ 0\leqslant\delta<1 \\
\frac{\lambda(A)}{\lambda(B)}=\frac{K(K-1)}{2}+K\delta 
\end{cases}
\end{equation}
Ruzsa \cite{Ruzsa_minoration_AplusB} a démontré le théorème suivant.
\begin{theorem}[Ruzsa]\label{thm_Ruzsa}
Soient $A,B\subseteq\mathbb{R}$ deux ensembles bornés non vides tels que $\lambda(B)\neq 0$. Notons $D_B$ le diamètre de $B$ et $(K,\delta)$ défini par \eqref{def_(K,delta)}. On a 
$$\lambda(A+B)\geqslant \lambda(A)+\min\big( D_B,(K+\delta)\lambda(B)\big).$$
\end{theorem}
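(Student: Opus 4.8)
Le plan consiste à se ramener à des ensembles compacts, puis à analyser la répartition de $A+B$ le long d'une partition de $\mathbb{R}$ en intervalles de longueur $D_B$. Par régularité intérieure de la mesure de Lebesgue, on peut supposer $A$ et $B$ compacts : on approche $A$ et $B$ de l'intérieur par des compacts $A'\subseteq A$, $B'\subseteq B$ de mesures arbitrairement proches, en veillant à ce que $B'$ contienne des points proches de $\inf B$ et de $\sup B$, de sorte que $D_{B'}\to D_B$ ; alors $A'+B'\subseteq A+B$, et comme le membre de droite dépend continûment du ratio $\lambda(A)/\lambda(B)$ (la fonction $\lambda(A)/\lambda(B)\mapsto(K+\delta)\lambda(B)$ étant continue, même aux valeurs $\tfrac{K(K-1)}{2}$), on conclut par passage à la limite. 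Quitte à translater $B$, on suppose $\inf B=0$ et $\sup B=D_B=:d$, si bien que $0,d\in B$ ; quitte à dilater, on normalise $\lambda(B)=1$. Il reste à établir
$$\lambda(A+B)\geqslant a+\min(d,K+\delta),\qquad a:=\lambda(A)=\tfrac{K(K-1)}{2}+K\delta,\quad d\geqslant 1.$$

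On translate également $A$ pour que $\min A=0$, et l'on pose $I_j=[jd,(j+1)d)$, $a_j=\lambda(A\cap I_j)$, $p_j=\lambda\big((A+B)\cap I_j\big)$ (la somme ne portant que sur un nombre fini d'indices, $A+B$ étant borné), de sorte que $\sum_j a_j=a$ et $\sum_j p_j=\lambda(A+B)$. Deux minorations alimentent l'argument : comme $(A\cap I_j)+B\subseteq I_j\cup I_{j+1}$, l'inégalité de Brunn--Minkowski sur $\mathbb{R}$ (i.e. $\lambda(X+B)\geqslant\lambda(X)+1$) donne $p_j+p_{j+1}\geqslant a_j+1$ lorsque $A\cap I_j\neq\varnothing$ ; et comme $(A+B)\cap I_j$ contient à la fois $A\cap I_j$ et le translaté $(A\cap I_{j-1})+d$, on a $p_j\geqslant\max(a_j,a_{j-1})$. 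On veut en tirer, par sommation, $\sum_j p_j\geqslant a+\min(d,K+\delta)$. De manière équivalente, on peut présenter l'argument comme une récurrence sur $K$ : l'étape inductive consiste à couper $A$ à un niveau $t$ tel que $\lambda\big(A\cap(-\infty,t]\big)=\tfrac{(K-1)(K-2)}{2}+(K-1)\delta$ (niveau qui existe par continuité), puis à appliquer l'hypothèse de récurrence à $A\cap(-\infty,t]$, le surplus devant provenir de $A\cap(t,+\infty)$ — dont la mesure vaut $K-1+\delta$, donc $\lambda\big((A\cap(t,+\infty))+B\big)\geqslant K+\delta$ par Brunn--Minkowski — et du fait que $\big(A\cap(-\infty,t]\big)+B$ reste confiné dans $(-\infty,t+d]$.

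Le point délicat — et, à mon avis, le principal obstacle — est de restituer \emph{exactement} la constante $\min(d,K+\delta)$ au moment de la sommation. Les ensembles $(A\cap I_j)+B$ consécutifs se recoupent dans une même fenêtre, et combiner naïvement les inégalités $p_j+p_{j+1}\geqslant a_j+1$ ne fournit que la minoration affaiblie $\lambda(A+B)\geqslant\tfrac12\big(a+\#\{j:a_j>0\}\big)$, insuffisante ; de même, dans la version par récurrence, $\big(A\cap(-\infty,t]\big)+B$ et $\big(A\cap(t,+\infty)\big)+B$ peuvent se chevaucher sur un intervalle de longueur $d$, et une majoration brutale de ce chevauchement fait perdre trop dans le régime $d\leqslant K+\delta$. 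Il faut donc analyser soigneusement les fenêtres où $A$ rencontre plusieurs « tranches » : les translatés de $B$ y sont repliés modulo $d$, et c'est une inégalité de type Kneser--Macbeath dans $\mathbb{R}/d\mathbb{Z}$ qui empêche leur recouvrement d'être trop important et permet, après une sommation télescopique et une disjonction de cas selon que $d\leqslant K+\delta$ ou non (le cas de base $K=1$ se traitant à part, de façon élémentaire mais non triviale), d'obtenir le gain annoncé. La mise en place propre de cette comptabilité des chevauchements constitue le cœur technique de la démonstration.
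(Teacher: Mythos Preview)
Your proposal remains a sketch: you outline two plausible routes (slab decomposition; induction on $K$), correctly identify the common obstruction --- controlling the overlap between consecutive pieces of $A+B$ sharply enough to recover the exact constant $K+\delta$ --- but do not close it. Invoking ``une inégalité de type Kneser--Macbeath dans $\mathbb{R}/d\mathbb{Z}$'' names the right tool without showing how the summation actually goes through, and as you note yourself, the naïve combination of $p_j+p_{j+1}\geqslant a_j+1$ loses a factor of two.

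The paper does not reprove this theorem (it is Ruzsa's, cited), but it does display the device that dissolves your overlap problem, and uses it throughout Section~2. Rather than slicing $\mathbb{R}$ into slabs $I_j$ and tracking $p_j=\lambda\big((A+B)\cap I_j\big)$, one works in $\mathbb{R}/d\mathbb{Z}$ via the multiplicity level sets
\[
\tilde{E}_k=\big\{x\in[0,d):\#\{n\in\mathbb{Z}:x+nd\in E\}\geqslant k\big\},\qquad \lambda(E)=\sum_{k\geqslant 1}\mu(\tilde{E}_k).
\]
Your two elementary observations then read $\tilde{A}_{k-1}\subseteq\tilde{S}_k$ (from $0,d\in B$) and $\tilde{A}_k+B\subseteq\tilde{S}_k$, hence $\mu(\tilde{S}_k)\geqslant\mu(\tilde{A}_{k-1})$ and, by Macbeath in the torus, $\mu(\tilde{S}_k)\geqslant\mu(\tilde{A}_k)+\mu(B)$ (one may assume $\lambda(A+B)<\lambda(A)+d$, otherwise the theorem is immediate). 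The step that replaces your telescoping is a \emph{weighted} combination: for $1\leqslant k\leqslant K_A+1$,
\[
\mu(\tilde{S}_k)\geqslant\frac{k-1}{K_A}\,\mu(\tilde{A}_{k-1})+\frac{K_A+1-k}{K_A}\big(\mu(\tilde{A}_k)+\mu(B)\big),
\]
with the weights chosen so that after summing over $k$ each $\mu(\tilde{A}_j)$ appears with total coefficient $\tfrac{K_A+1}{K_A}$. This yields the paper's inequality~\eqref{lemme_Ruzsa}, namely $\lambda(A+B)\geqslant\tfrac{K_A+1}{K_A}\lambda(A)+\tfrac{K_A+1}{2}\lambda(B)$; a direct check then shows that $\min_{m\in\mathbb{N}^*}\big(\tfrac{a}{m}+\tfrac{m+1}{2}b\big)$ is attained at $m=K$ with value $(K+\delta)b$. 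The passage from your ``horizontal'' decomposition (by position) to this ``vertical'' one (by multiplicity) is precisely what absorbs the overlaps that slab-by-slab accounting cannot track without loss.
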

La minoration du théorème \ref{thm_Ruzsa} est également optimale et de Roton a étudié l'un des cas d'égalité (cf. \cite{Anne_structure}).
\begin{theorem}[de Roton]\label{thm_Anne}
Soient $A,B\subseteq\mathbb{R}$ deux fermés bornés tels que $\lambda(A),\lambda(B)\neq 0$. Notons $D_B$ le diamètre de $B$ et $(K,\delta)$ défini par \eqref{def_(K,delta)}. Si
$$\lambda(A+B)= \lambda(A)+\left( K+\delta\right)\lambda(B)<\lambda(A)+D_B,$$
alors $B$ et $A$ sont des translatés d'ensembles $B_0$ et $A_0$ de la forme
$$B_0=\left[0,b_+\right]\cup\left[D_B-b_-,D_B\right],$$
$$A_0=\bigcup\limits_{\substack{k=1}}^K \left[(k-1)(D_B-b_-),(k-1)D_B+(K-k)b_+ +\delta \lambda(B)\right],$$
où $b_+,b_-\geqslant 0$ et $b_+ +b_-=\lambda(B)$.
\end{theorem}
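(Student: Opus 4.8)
\emph{Strategy and normalisation.} The plan is to extract the structure by making a proof of Theorem~\ref{thm_Ruzsa} rigid: under the hypothesis the bound of Theorem~\ref{thm_Ruzsa} is attained through its second term $(K+\delta)\lambda(B)$, and since $(K+\delta)\lambda(B)<D_B$ one expects every intermediate inequality in a proof of $\lambda(A+B)\geqslant\lambda(A)+(K+\delta)\lambda(B)$ to be an equality. After translating $A$ and $B$ I assume $\inf A=\inf B=0$, so $A\subseteq[0,D_A]$ and $B\subseteq[0,D_B]$ with $0,D_A\in A$ and $0,D_B\in B$ (the sets being closed), whence $0,D_A+D_B\in A+B\subseteq[0,D_A+D_B]$. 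Writing $\alpha=\lambda(A)$, $\beta=\lambda(B)$, I record the arithmetic identity $\alpha/\beta+K+\delta=K(K+1)/2+(K+1)\delta$: one application of Theorem~\ref{thm_Ruzsa} to $(A,B)$ sends the index $(K,\delta)$ of $A$ to the index $(K+1,\delta)$ of $A+B$, with the \emph{same} $\delta$; this makes an induction on $K$ natural.

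\emph{Step 1: the shape of $B$.} I first show $B=[0,b_+]\cup[D_B-b_-,D_B]$ with $b_\pm\geqslant0$ and $b_++b_-=\beta$. The idea is a rearrangement argument: replacing $B$ by the set obtained by sliding its mass towards the two endpoints of $[0,D_B]$ (keeping $\lambda(B)$ and $D_B$) does not increase $\lambda(A+B)$, and decreases it strictly unless $B$ already has this form. Here the hypothesis $\lambda(A+B)<\lambda(A)+D_B$ is used: it prevents $A+B$ from being the full interval $[0,D_A+D_B]$, which is what makes the precise shape of $B$ influence $\lambda(A+B)$. Since by hypothesis $\lambda(A+B)$ equals Ruzsa's lower bound --- the global minimum of $\lambda(A'+B')$ over all closed bounded pairs with $\lambda(A')=\alpha$, $\lambda(B')=\beta$, $D_{B'}=D_B$ --- no strict decrease is possible, so $B$ has the stated form. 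Proving the ``no decrease, with equality only for this shape'' dichotomy is a refinement of Ruzsa's own argument and does not follow from Theorem~\ref{thm_Ruzsa} as a black box; I expect this to be the main difficulty.

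\emph{Step 2: the shape of $A$, by induction on $K$.} For $K=1$ one has $\alpha=\delta\beta<\beta$ and the hypothesis reads $\lambda(A+B)=2\alpha+\beta=\alpha+\beta+\min(\alpha,\beta)$; with $B=[0,b_+]\cup[D_B-b_-,D_B]$ from Step~1 and using $\lambda(A+[0,h])=\lambda(A)+h+\sum_i\min(g_i,h)$, where the $g_i$ are the complementary gaps of $A$, together with the fact that $\lambda(A+B)<\alpha+D_B$ forces $A+B$ to be disconnected rather than nearly an interval, the real analogue of Freiman's $3k-4$ theorem at its boundary forces $A$ to be a single interval of length $\alpha=\delta\beta$; with Step~1 this gives $A_0,B_0$. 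For the inductive step $K\geqslant2$, with $B$ fixed, set $A'=\{x:x+B\subseteq A\}$, a closed bounded set. First one shows $A'+B=A$: the inclusion $A'+B\subseteq A$ is automatic, while $\lambda(A'+B)<\lambda(A)$ would contradict $\lambda(A+B)=\alpha+(K+\delta)\beta$ after bounding $\lambda(A+B)$ below in terms of $\lambda(A\setminus(A'+B))$, again using $(K+\delta)\beta<D_B$ to keep the translates $x+B$, $x\in A'$, disjoint enough. One then computes $\lambda(A')=\beta\bigl((K-1)(K-2)/2+(K-1)\delta\bigr)$ and $\lambda(A'+B)=\lambda(A')+(K-1+\delta)\beta<\lambda(A')+D_B$, so $(A',B)$ satisfies the hypothesis with index $(K-1,\delta)$; the inductive hypothesis yields the staircase for $A'$, and $A=A'+B$ shows $A$ is a translate of $(K-1)B_0+[0,\delta\beta]$, which equals $A_0$ after a routine endpoint computation. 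One checks separately, using only $(K+\delta)\beta<D_B$ and $b_++b_-=\beta$, that $A_0$ and $A_0+B_0$ are genuine disjoint unions of $K$ and $K+1$ intervals of the claimed lengths, and that the degenerate subcase $\lambda(A')=0$ (occurring for $K=2$, $\delta=0$) forces $A$ to be a translate of $B$.

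\emph{Main obstacle.} The crux is Step~1 --- proving $B$ carries no mass in its interior --- which seems to require redoing the equality analysis inside Ruzsa's argument rather than invoking Theorem~\ref{thm_Ruzsa} as a black box. Once $B$ is pinned down, the bookkeeping of Step~2 is largely forced, but it still rests on the nontrivial ``opening'' identity $A=\{x:x+B\subseteq A\}+B$, which is false for general closed bounded $A$ and must be deduced from the equality hypothesis; and the base case $K=1$ sits exactly at the boundary of the $3k-4$ regime, where the extremal family is larger than in the strict case, so one must exploit $(K+\delta)\lambda(B)<D_B$ to rule out the configurations in which $A+B$ is almost an interval.
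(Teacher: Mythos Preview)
This theorem is quoted from \cite{Anne_structure} and not reproved in the paper; however, the paper's proof of Theorem~\ref{main_result} follows de Roton's original strategy closely, so a comparison is still meaningful. That strategy is: project modulo $D_B$, apply a Kneser-type theorem in $\mathbb{T}$ to the pairs $(\tilde{A}_k,B)$, show that the resulting dilation factor equals $1$ so that $B\bmod D_B$ is an interval (hence $B$ is two intervals anchored at its endpoints), and then reconstruct $A$ from the tower $\tilde{A}_1\supseteq\cdots\supseteq\tilde{A}_K$. Your route --- endpoint rearrangement for $B$, then induction on $K$ via the opening $A'=\{x:x+B\subseteq A\}$ --- is genuinely different and attractive if it can be made to work, but as written both decisive steps are asserted rather than proved.

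For Step~1, the monotonicity claim (pushing the mass of $B$ towards $\{0,D_B\}$ does not increase $\lambda(A+B)$, strictly unless $B$ already has that shape) is not a standard rearrangement inequality, and Ruzsa's proof of Theorem~\ref{thm_Ruzsa} does not proceed this way, so ``a refinement of Ruzsa's own argument'' does not point to an existing argument. For Step~2, the opening identity $A'+B=A$ is the crux and you do not prove it: the sketch ``$\lambda(A'+B)<\lambda(A)$ would contradict $\lambda(A+B)=\alpha+(K+\delta)\beta$ after bounding $\lambda(A+B)$ below in terms of $\lambda(A\setminus(A'+B))$'' has no force, since a point $x\in A\setminus(A'+B)$ merely satisfies $x+B\not\subseteq A$, and $x+B$ may still lie entirely inside $A+B$ through other translates, so no excess measure of $A+B$ is created. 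Establishing $A'+B=A$ from the equality hypothesis alone seems to require already knowing much of the structure of $A$ --- which is exactly what the mod-$D_B$ analysis together with Kneser supplies directly in de Roton's approach. The base case $K=1$ is likewise only gestured at: invoking ``the real $3k{-}4$ theorem at its boundary'' does not by itself explain why $A$ must be a single interval rather than a more general extremal configuration.
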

\vspace{0.4cm}

$$\begin{minipage}[l]{17cm}
\includegraphics[height=13.5cm]{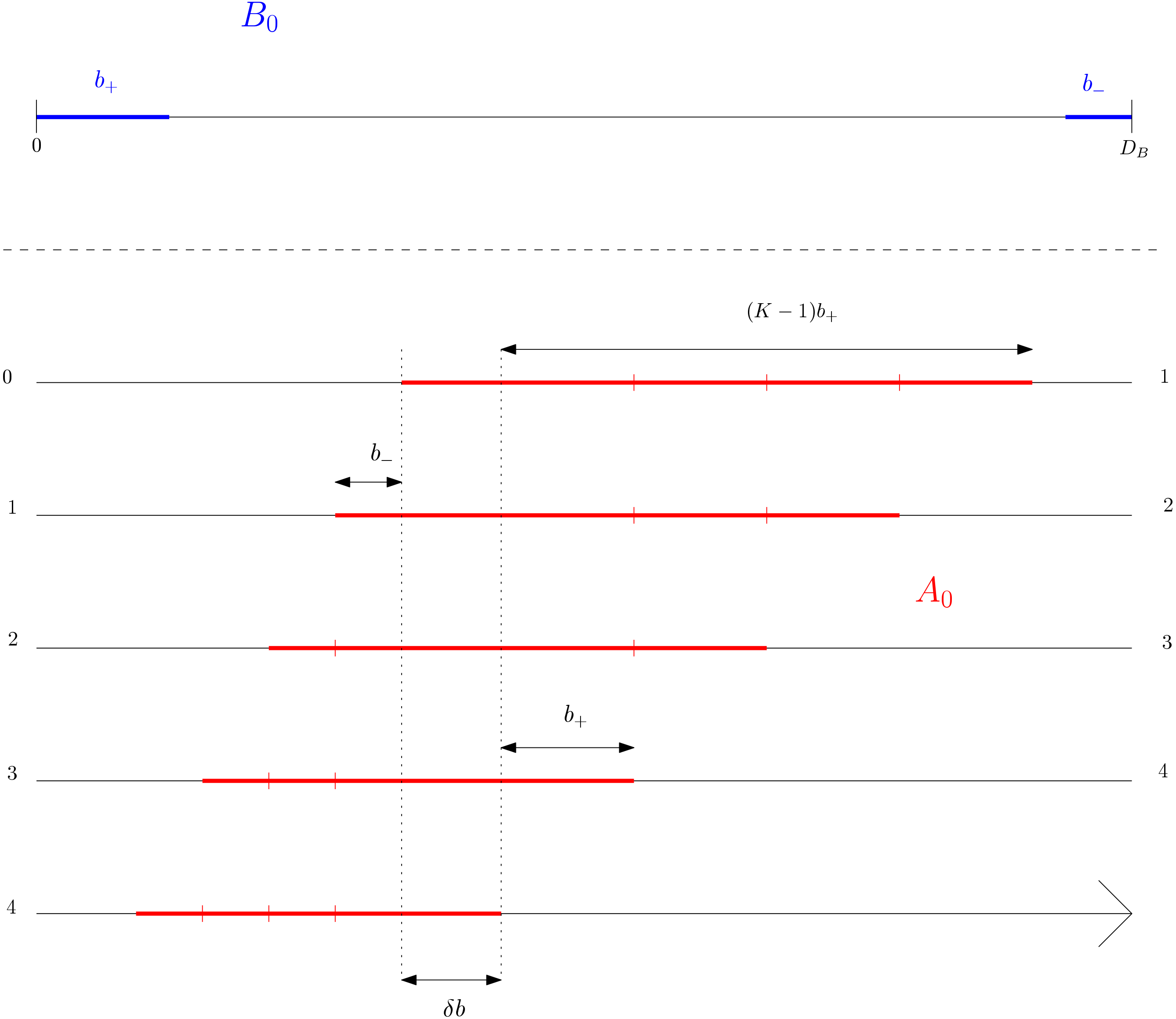}
\end{minipage}$$
\vspace{0.5cm}

Dans cet article, nous nous intéressons au voisinage de ce cas d'égalité, lorsque
$$\lambda(A+B)=\lambda(A)+(K+\delta+\varepsilon )\lambda(B),$$ 
pour un petit $\varepsilon>0$. Nous verrons alors que, si $\varepsilon$ est suffisamment petit, $A$ et $B$ sont respectivement inclus dans des translatés de voisinages de $A_0$ et $B_0$ (où $A_0$ et $B_0$ sont les ensembles définis dans le théorème \ref{thm_Anne}). Ce résultat fait l'objet du théorème \ref{main_result} ci-après.
\begin{theorem}\label{main_result}
Soient $A,B\subseteq\mathbb{R}$ deux ensembles fermés, bornés, de mesures non nulles et tels que $\lambda(B)\leqslant\lambda(A\mod D_B)$, où $D_B$ désigne le diamètre de $B$. Soit $(K,\delta)$ définis par \eqref{def_(K,delta)}.
Si $\lambda(A+B)= \lambda(A)+\left( K+\delta+\varepsilon\right)\lambda(B)$ et 
$$\big(K+\delta+ (K^2\log K+12)\varepsilon\big)\lambda(B)<D_B$$ 
pour $\varepsilon >0$ tel que
$$\varepsilon < \min\left(\left(\dfrac{\delta}{3K}\right)^3,\dfrac{1-\delta}{K^3\log K},\frac{\rho_0}{K \lambda(B)}\right),$$
où $\rho_0=3,1\times 10^{-1549}$, alors $B$ est inclus dans un translaté de $\left[0,b_+\right]\cup\left[D_B-b_-,D_B\right],$ où $b_+,b_-\geqslant 0$ et $b_+ +b_-\leqslant\lambda(B)(1+\varepsilon )$, et $A$ est inclus dans un translaté de $A_0+\left[ 0,\varepsilon \lambda(B) \right]$, où 
$$A_0=\bigcup\limits_{\substack{k=1}}^K \left[(k-1)(D_B-b_-),(k-1)D_B+(K-k)b_+ +\delta \lambda(B)\right] .$$
\end{theorem}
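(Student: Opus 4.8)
The plan is to treat the quantity $\varepsilon\lambda(B)$ as a "deficit" measuring how far $A+B$ is from the extremal configuration, and to show that this deficit controls, via quantitative stability, the distance of $A$ and $B$ from the translates of $A_0$ and $B_0$. The starting point should be a Freiman-type / Ruzsa-type analysis: one reduces modulo $D_B$, using the hypothesis $\lambda(B)\le\lambda(A\bmod D_B)$, to pass from the real line to the circle $\mathbb{R}/D_B\mathbb{Z}$, where a Kneser-type inequality is available. On the circle, $\lambda\big((A+B)\bmod D_B\big)\ge\lambda(A\bmod D_B)+\lambda(B)$ unless $A\bmod D_B$ is periodic; the near-equality case $\lambda(A+B)=\lambda(A)+(K+\delta+\varepsilon)\lambda(B)$ with $(K+\delta+O(\varepsilon))\lambda(B)<D_B$ forces the "lifted" picture to consist of essentially $K$ translated copies of $B$-like blocks, with total slack $\varepsilon\lambda(B)$. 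This is where de Roton's argument (Theorem~\ref{thm_Anne}) is run with error terms kept track of rather than discarded.

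The key steps, in order, would be: (1) \emph{Normalisation and reduction mod $D_B$.} Translate so $\inf B=0$, write $B\subseteq[0,D_B]$, and consider $\tilde A=A\bmod D_B$; establish that $\lambda(\tilde A)$ and $\lambda(A)/\lambda(B)\in\big[\tfrac{K(K-1)}2+K\delta,\ \cdot\big]$ are linked to the number of "sheets" $K$. (2) \emph{A quantitative Kneser/Ruzsa step on the circle.} Show that the near-equality hypothesis implies $B$ is within Hausdorff-type distance $O(\varepsilon\lambda(B))$ of a union of two intervals $[0,b_+]\cup[D_B-b_-,D_B]$ with $b_++b_-\le(1+\varepsilon)\lambda(B)$ — i.e. $B$'s own "sumset defect" with a suitable test set is small, so $B$ is almost an arithmetic-progression-of-two-intervals; this uses that $B+B$ (or $B$ plus a short interval) cannot be too large without inflating $\lambda(A+B)$. (3) \emph{Propagating structure to $A$.} Knowing $B\approx[0,b_+]\cup[D_B-b_-,D_B]$, analyse $A+B$: the left endpoints of $B$'s two intervals act like two "shifts" $0$ and $D_B-b_-$, so $A+B \supseteq (A)\cup(A+D_B-b_-)$ roughly, and iterating the near-minimality forces $A$ to be squeezed into $K$ slabs of the stated shape, each slab being an interval up to an error of length $\le\varepsilon\lambda(B)$ (hence the $A_0+[0,\varepsilon\lambda(B)]$ conclusion). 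Here the conditions $\varepsilon<(\delta/3K)^3$ and $\varepsilon<(1-\delta)/(K^3\log K)$ are exactly what is needed to keep the parameters $\delta$ and $1-\delta$ bounded away from the degenerate regimes where the number of blocks could jump, and the absurdly small $\rho_0$ is the price of making an iteration/compactness argument effective.

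\textbf{Main obstacle.} The hard part will be step (3): converting the approximate structure of $B$ into rigid, quantitatively-controlled structure for $A$ across all $K$ sheets simultaneously. In de Roton's exact setting one can use a clean inductive/telescoping argument because each inclusion is an equality; in the $\varepsilon$-perturbed setting the errors accumulate across the $K$ sheets and one must show they do not blow up — this is presumably why the final error for $A$ is only $\varepsilon\lambda(B)$ (not $K\varepsilon\lambda(B)$), which requires a non-trivial argument that the deficit is "shared" rather than "summed" over sheets. I expect this to rest on a careful accounting: each sheet that deviates by more than its share of the deficit would, through the addition with $B$'s two intervals, create genuinely new mass in $A+B$ exceeding the allotted $\varepsilon\lambda(B)$, a contradiction. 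Secondary technical annoyances will be handling the boundary cases $\delta$ near $0$ (a sheet nearly disappears) and $\delta$ near $1$ (a new sheet nearly appears), which the hypotheses on $\varepsilon$ are designed to exclude, and making the compactness step producing $\rho_0$ fully explicit. A final clean-up step identifies the translation constants and verifies $b_++b_-\le(1+\varepsilon)\lambda(B)$ from the measure bookkeeping.
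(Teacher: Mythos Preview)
Your three-stage architecture (reduce modulo $D_B$, apply a quantitative Kneser-type result on the circle, then propagate structure to $A$) matches the paper's, but two concrete pieces are missing or misplaced and would block the argument as you have outlined it.

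First, the constant $\rho_0$ does not arise from making your own iteration effective. It is the validity threshold of an existing result you must invoke: the Candela--de~Roton stability theorem in $\mathbb{T}$ (Theorem~\ref{Anne_Pablo} in the paper), which says that if $\mu(A'+B')\le\mu(A')+\mu(B')+\rho$ with $\rho\le\rho_0$, then there is an integer $n\ge1$ and intervals $I,J$ with $nA'\subseteq I$, $nB'\subseteq J$ and $\mu(I)\le\mu(A')+\rho$, $\mu(J)\le\mu(B')+\rho$. The paper applies this not to $B+B$ but to each layer set $\tilde A_k+B$ (where $\tilde A_k=\{x\in[0,1[:\#\{n:n+x\in A\}\ge k\}$), after first showing that the $\varepsilon$-deficit splits into nonnegative layerwise errors $\varepsilon_k^1,\varepsilon_k^2,\varepsilon_k^3$ each bounded by $K\varepsilon\le\rho_0/b$. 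Your Step~(2) as written (``show $B$'s own sumset defect with a test set is small'') does not give this, and in particular there is no direct route to $B\approx[0,b_+]\cup[D_B-b_-,D_B]$ without passing through the $\tilde A_k$.

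Second, and more seriously, the Candela--de~Roton theorem produces an undetermined dilation factor $n=m_k$ for each layer. You get only that $m_kB$ is near an interval, i.e.\ $B$ is near a union of $m_k$ equally spaced short arcs. One must then prove (i) all the $m_k$ coincide, and (ii) the common value $m$ equals $1$. Step~(ii) is the genuine main obstacle of the paper, occupying several pages: one assumes $m\ge2$, decomposes $A$ and $B$ into their pieces along $m^{-1}J$, and derives a contradiction from a careful count of how many pieces of $A$ and $B$ are ``large'' (Lemmas~\ref{min_BM}--\ref{min_LA_m>}), splitting into the regimes $m<\mu(I_K)/(\varepsilon_K^2 b)$ and $m\ge\mu(I_K)/(\varepsilon_K^2 b)$. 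This is where the cubic smallness $\varepsilon<(\delta/3K)^3$ is actually used. The error-accumulation issue you flag as the main obstacle in Step~(3) is real but comparatively routine once $m=1$ is known; the paper handles it by first showing $\lambda(A\cap A')\ge\lambda(A)-O(K^2\log K)\varepsilon b$ for an explicit candidate $A'$, then a separate ``refinement'' pass (applying Candela--de~Roton once more to each block $A_k$) recovers the sharp $\varepsilon b$ neighbourhood rather than $K\varepsilon b$.
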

\begin{Remarque}
$A_0$ et $A$ sont de même mesure donc $A_0+\left[ 0,\varepsilon \lambda(B) \right]$ est un voisinage métrique assez fin de $A$.
\end{Remarque}
La borne $\varepsilon\leqslant \frac{\rho_0}{\lambda(B)}$ provient d'un théorème de Candela et de Roton \cite{Anne_Pablo} (théorème \ref{Anne_Pablo} ci-après) dont nous allons nous servir dans ce travail, et qui étend le théorème de Kneser \cite{Kneser} aux ensembles presque critiques dans le tore $\mathbb{T}=\mathbb{R}/\mathbb{Z}$. 

On note $\mu$ la mesure de Haar intérieure sur le tore $\mathbb{T}$ ($\mathbb{T}=\mathbb{R}/\mathbb{Z}$) et pour tout ensemble $E$ et tout scalaire $n$, on pose $nE=\left\lbrace ne \ \vert \ e\in E \right\rbrace.$
\begin{theorem}[Candela et de Roton]\label{Anne_Pablo}
Soit $\rho\in [0,\rho_0]$ où $\rho_0=3,1\times10^{-1549}$. Soient $A,B\subset\mathbb{T}$ satisfaisant 
$$\mu(A+B)=\mu(A)+ \mu(B) +\rho < \frac{1}{2}\left(\mu(A)+ \mu(B) +1\right),$$ 
et $\rho < \mu(B) \leqslant\mu(A)$. Alors il existe trois intervalles $I,J,K\subseteq\mathbb{T}$, avec $I,J$ fermés et $K$ ouvert, et un entier naturel $n$ non nul tels que $nA\subseteq I$, $nB\subseteq J$, $K\subseteq n(A+B)$, et $\mu(I)\leqslant\mu(A)+\rho$, $\mu(J)\leqslant \mu(B) +\rho$, $\mu(K)\geqslant\mu(A)+ \mu(B) $.
\end{theorem}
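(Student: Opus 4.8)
The plan is to transport the whole problem to the circle $\mathbb{T}=\mathbb{R}/\mathbb{Z}$, apply the stability theorem \ref{Anne_Pablo} of Candela and de Roton there, and then lift the resulting arc-structure back to $\mathbb{R}$, refining it into the triangular profile of $A_0$. \textbf{Normalisation and reduction to the torus.} After rescaling I may assume $D_B=1$, and after translating that $\inf B=0$, $\sup B=1$, so $B\subseteq[0,1]$ with $0,1\in B$. Let $\pi:\mathbb{R}\to\mathbb{T}$ be the projection and set $\bar A=\pi(A)$, $\bar B=\pi(B)$. Since $B\subseteq[0,1]$ only the endpoints $0\equiv1$ are identified, so $\mu(\bar B)=\lambda(B)=:\beta$, while $\mu(\bar A)=\lambda(A\bmod 1)\geqslant\beta$ by the hypothesis $\lambda(B)\leqslant\lambda(A\bmod D_B)$; this is exactly the ordering $\mu(\bar B)\leqslant\mu(\bar A)$ required by Theorem \ref{Anne_Pablo}, and $\overline{A+B}=\bar A+\bar B$ because $\pi$ is a homomorphism. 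A direct computation on the extremal pair shows $\mu(\overline{A_0})=(K-1+\delta)\beta$ and $\mu(\overline{A_0}+\overline{B_0})=\mu(\overline{A_0})+\mu(\overline{B_0})=(K+\delta)\beta$, i.e. the real equality case of de Roton maps exactly onto the Kneser equality case on $\mathbb{T}$; this is what makes \ref{Anne_Pablo} the right tool.

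\textbf{Transferring the near-extremality (quantitative heart).} Writing the Lebesgue measures as integrals over $\mathbb{T}$ of the fibre-counting functions $m_A(x)=\#\{j\in\mathbb{Z}:x+j\in A\}$, so that $\lambda(A)=\int_{\mathbb{T}}m_A$ and $\mu(\bar A)=\int_{\mathbb{T}}\mathbf{1}[m_A\geqslant1]$, I would combine the hypothesis $\lambda(A+B)=\lambda(A)+(K+\delta+\varepsilon)\lambda(B)$ with Ruzsa's theorem \ref{thm_Ruzsa} applied to the stack of sheets of $A$ (this is where the coefficient $K(K-1)/2$ and the extra factor $K$ originate) to produce a torus identity $\mu(\bar A+\bar B)=\mu(\bar A)+\mu(\bar B)+\rho$ with $0\leqslant\rho$ of size at most of order $K\varepsilon\lambda(B)$. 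The bound $\varepsilon<\rho_0/(K\lambda(B))$ is calibrated precisely so that $\rho\leqslant\rho_0$; the remaining smallness of $\varepsilon$ gives $\rho<\mu(\bar B)$, and the diameter hypothesis $(K+\delta+(K^2\log K+12)\varepsilon)\lambda(B)<D_B$ forces a gap $1-\mu(\bar A)-\mu(\bar B)>0$ large enough to guarantee $\mu(\bar A+\bar B)<\tfrac12(\mu(\bar A)+\mu(\bar B)+1)$. All hypotheses of Theorem \ref{Anne_Pablo} are then met.

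\textbf{Applying Candela--de Roton and descending.} Theorem \ref{Anne_Pablo} yields $n\geqslant1$ and arcs with $n\bar A\subseteq I$, $n\bar B\subseteq J$, $\mu(I)\leqslant\mu(\bar A)+\rho$ and $\mu(J)\leqslant\beta+\rho\leqslant\beta(1+\varepsilon)$. I would first show $n=1$: a nontrivial dilation would force $\overline{A+B}$ to be invariant under the order-$n$ rotation group and $\bar B$ to lie in $n\geqslant2$ equally spaced short arcs, which is incompatible with $B$ actually realising its diameter $D_B=1$ (its extreme points $0,1$ fold to a single point while the whole folded image stays in one arc of length $<1$); this is the second place where the diameter hypothesis is used. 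With $n=1$, $\bar B\subseteq J$ lifts to $B\subseteq[0,b_+]\cup[1-b_-,1]$ with $b_++b_-=\mu(J)\leqslant\beta(1+\varepsilon)$, and $\bar A\subseteq I$ lifts to $A\subseteq\bigcup_{j}(\tilde I+j)$ for a single lifted interval $\tilde I$ of length $\mu(I)$.

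\textbf{Refinement on the line.} It remains to upgrade ``$A$ in translates of one arc'' to the precise profile $A\subseteq A_0+[0,\varepsilon\lambda(B)]$. Now that $B$ is pinned to two clusters, $A+B=(A+[0,b_+])\cup(A+[1-b_-,1])$, and I would re-run de Roton's equality argument in a stable form: the sumset identity forces the occupancy of $A$ in the $K$ consecutive slabs $[j,j+1)$, $j=0,\dots,K-1$, to be, up to an error of total size $\varepsilon\lambda(B)$, the decreasing triangular pattern defining $A_0$, the slabs beyond the $K$-th being empty. The conditions $\varepsilon<(\delta/3K)^3$ and $\varepsilon<(1-\delta)/(K^3\log K)$ keep $\delta$ bounded away from $0$ and $1$ and control the accumulation of error across the $K$ interdependent slabs, which is the source of the factor $K^2\log K$ in the diameter hypothesis. \textbf{The main obstacle is precisely this last step}: de Roton's argument is rigid, using equality slab by slab, and making it robust --- distributing the single global slack $\varepsilon\lambda(B)$ across $K$ coupled slabs while keeping the error linear in $\varepsilon$ --- is the delicate part; the transfer estimate for $\rho$ in the second step is the other point where the constants must be tracked with care.
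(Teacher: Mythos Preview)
Your proposal does not address the stated theorem. Theorem \ref{Anne_Pablo} is the Candela--de Roton stability result on $\mathbb{T}$; the paper quotes it from \cite{Anne_Pablo} and uses it as a black box, it does not prove it. What you have sketched is instead an outline of a proof of Theorem \ref{main_result}, and your outline \emph{invokes} Theorem \ref{Anne_Pablo} as a tool. As a proof of the statement you were given this is circular.

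Even read as a strategy for Theorem \ref{main_result}, there is a genuine gap at the step where you claim $n=1$. Your argument is that the endpoints $0,1\in B$ fold to a single point, so $\bar B$ ``staying in one arc'' forces $n=1$. But Theorem \ref{Anne_Pablo} only gives $n\bar B\subseteq J$, i.e.\ $\bar B$ lies in a union of $n$ equally spaced arcs, and nothing about $\operatorname{diam}(B)=1$ prevents this: a set like $B=\{0,1/n,2/n,\dots,1\}+[0,\eta]$ has diameter $1$ and satisfies $n\bar B$ in a short arc. The paper devotes an entire section (2.3) to ruling out $m\geqslant2$, via a delicate counting of the pieces $A_l,B_l$ obtained after slicing along $m^{-1}J$ and $m^{-1}I_K$, splitting into the cases $m<\mu(I_K)/(\varepsilon_K^2 b)$ and $m\geqslant\mu(I_K)/(\varepsilon_K^2 b)$, and optimising auxiliary thresholds $f(\varepsilon,K),g(\varepsilon,K)$; the bounds $\varepsilon<(\delta/3K)^3$ and $\varepsilon<(1-\delta)/(K^3\log K)$ are used precisely there. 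Your one-sentence dismissal of this step does not survive.

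There is also a structural difference worth noting. You apply Theorem \ref{Anne_Pablo} once, to the pair $(\bar A,\bar B)=(\tilde A_1,B)$. The paper applies it $K$ times, once to each layer $(\tilde A_k,B)$ for $k=1,\dots,K$, obtains integers $m_k$, shows they are all equal, and only then proves $m=1$. The layer $\tilde A_K$ (not $\tilde A_1$) is what anchors the triangular structure of $A$: it is the smallest layer, and the argument that $\Omega(\tilde A_K)$ is an interval of $K$ consecutive integers (Lemma \ref{omega(tilde(AK)}) is the seed from which $A'$ is built. Your single application to $\tilde A_1$ gives only that $\pi(A)$ sits in one arc, which is far weaker and leaves essentially all of the ``refinement on the line'' still to do; you acknowledge this is the main obstacle, but it is in fact most of the proof.
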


Toute amélioration de la borne $\rho_0$ qui provient d'un résultat dans $\mathbb{Z}_p$ (le corps fini à $p$ éléments) de Grynkiewicz, améliorera automatiquement le domaine de validité de notre résultat. On conjecture que $\rho_0$ peut atteindre $1$. 
\begin{Remarque}
Dans le théorème \ref{main_result}, nous avons supposé $A$ et $B$ fermés mais quitte à considérer des ensembles fermés $A_n\subseteq A$ et $B_n\subseteq B$ tels que $\lambda(A_n)\rightarrow\lambda(A)$, $\lambda(B_n)\rightarrow\lambda(B)$ et $D_{B_n}\rightarrow D_B$, on peut supposer $A$ et $B$ seulement Lebesgue-mesurables. 
\end{Remarque}

\section{Preuve du théorème \ref{main_result}} 

La démonstration qui reprend les idées et la structure de la preuve développée par de Roton dans le cas d'égalité \cite{Anne_structure}, comportera principalement trois étapes. La première consistera à travailler modulo le dimaètre de $B$ puis à utiliser l'inégalité de Ruzsa et le Théorème \ref{Anne_Pablo} afin de montrer que nos ensembles sont proches de progressions arithmétiques d'intervalles. La deuxième consistera à prouver que $B$ est finalement proche de l'union de deux intervalles seulement. La troisième permettra de dégager la structure de $A$. Dans le cas d'égalité, les sous-ensembles sont des intervalles, alors que nous avons des "erreurs", les sous-ensembles ne remplissent pas entièrement les intervalles et il pourra y avoir des décalages, ce qui complique chaque étape et ajoute des contraintes. Notamment pour la dernière étape, il nous faudra établir la structure de $A$ en trois temps. Tout d'abord, nous dégagerons la structure principale de $A$, qui contient la grande majorité des éléments de $A$, puis nous exhiberons des informations sur le reste des éléments de $A$. Nous finirons par affiner ces informations en retrouvant un point de vue plus global. 

\subsection{Hypothèses}

Notons tout d'abord que quitte à translater $A$ et $B$ et normaliser $B$, on peut supposer que $A$ et $B$ sont deux fermés de mesure non nulle tels que $\min B=0$, $D_B=\sup B-\inf B=1$, et $\min A\in\left[0,1\right[$. Soient $K\in\mathbb{N^*}$ et $0\leqslant\delta<1$ tels que 
\begin{equation}\label{A/B=K(K-1)2+Kdelta}
\frac{\lambda(A)}{\lambda(B)}=\frac{K(K-1)}{2}+K\delta.
\end{equation}
\begin{Remarque}\label{K>1}
Notons que $K$ est nécessairement supérieur ou égal à $2$ car $\lambda(A)\geqslant\lambda(B)$ par hypothèse. 
\end{Remarque}
On suppose que $\lambda(A+B)= \lambda(A)+\left( K+\delta+\varepsilon\right)\lambda(B),$ où $\varepsilon$ est un réel positif tel que
\begin{equation}\label{hyp_1}
\varepsilon < \left(\dfrac{\delta}{3K}\right)^3,
\end{equation}
\begin{equation}\label{hyp_2}
\varepsilon <\dfrac{1-\delta}{K^3\log K},
\end{equation}
\begin{equation}\label{hyp_3}
\varepsilon <\frac{\rho_0}{K \lambda(B)},
\end{equation}
et $\big(K+\delta+ (K^2\log K+12)\varepsilon\big)\lambda(B)<D_B=1$. En fait nous n'aurons besoin que de l'hypothèse plus faible suivante
\begin{equation}\label{hyp_4}
\Big(K+\delta+ \big(K^2\log (K)-K\big(K(1+\log 4)-\log K-7+\log 4\big)+8\big)\varepsilon\Big)\lambda(B)<1,
\end{equation}
ce qui entraine en particulier
\begin{equation}\label{hyp_0}
(K+\delta+2\varepsilon )\lambda(B)<1.
\end{equation}

\subsection{Mesures modulo le diamètre de $B$}

Posons $S=A+B$. Pour tout entier positif $k$ et tout sous-ensemble $E$ de $\mathbb{R_+}$ on définit
$$\tilde{E}_k=\left\lbrace x\in\left[ 0,1\right[ \ \vert \ \#\left\lbrace n\in\mathbb{N} \ \vert \ n+x\in E \right\rbrace \geqslant k \right\rbrace \ \rm{ et } \ K_E=\sup\left\lbrace k\in\mathbb{N} \ \vert \ \tilde{E}_k\neq\varnothing \right\rbrace.$$
Notons que $\tilde{E}_{k+1}\subseteq \tilde{E_k}$ et $\tilde{E_1}=\pi(E)$, où $\pi$ désigne la projection de $\mathbb{R}$ dans $\mathbb{T}$. Tout au long de la preuve du théorème \ref{main_result}, nous allons utiliser le fait que pour tout ensemble borné $E\subset\mathbb{R}_+$, on a
\begin{equation}\label{lambda(E)=somme_mu(Ek)}
\lambda(E)=\sum\limits_{\substack{k=1}}^{K_E}\mu\big( \tilde{E}_k\big).
\end{equation}
Ruzsa (cf. \cite{Ruzsa_minoration_AplusB} ou le lemme 1 dans \cite{Anne_structure}) a utilisé cette égalité afin de prouver l'inégalité suivante. Si $\lambda(A+B)<\lambda(A)+D_B$, on a
\begin{equation}\label{lemme_Ruzsa}
\lambda(A+B)\geqslant\dfrac{K_A+1}{K_A}\lambda(A)+\dfrac{K_A+1}{2}\lambda(B).
\end{equation}
\begin{Remarque}
Comme $D_B=1$, on utilisera indifféremment $\lambda(B)$ et $ \mu(B) $ pour désigner la mesure de $B$. On pose 
$$b=\lambda(B) =  \mu(B) .$$
\end{Remarque}
Nous sommes désormais prêts à entamer la première étape. Tout d'abord, montrons que
\begin{equation}\label{K=K_A}
K_A=K,
\end{equation}
où $K$ est défini par \eqref{A/B=K(K-1)2+Kdelta}.
\begin{proof}
D'une part comme $\tilde{A}_k+B\subseteq\tilde{S}_k$ pour tout $k\in\left\lbrace 1,...,K_A \right\rbrace$, on a $\mu\big( \tilde{S}_k\big) \geqslant \mu\big( \tilde{A}_k\big)+ b$ et donc
$$\lambda(A+B)=\lambda(S)=\sum\limits_{\substack{k=1}}^{K_S}\mu\big( \tilde{S}_k\big) \geqslant \sum\limits_{\substack{k=1}}^{K_A}\mu\big( \tilde{S}_k\big) \geqslant \sum\limits_{\substack{k=1}}^{K_A}\big(\mu\big( \tilde{A}_k\big)+ b \big) \geqslant \lambda(A)+K_A b,$$
d'où $K_A\leqslant K+\delta+\varepsilon$. Or par l'hypothèse \eqref{hyp_1} on a $\varepsilon < 1-\delta$ et donc $ K_A\leqslant K.$
D'autre part, par hypothèse et par \eqref{A/B=K(K-1)2+Kdelta}, on a
$$\lambda(A+B)=\lambda(A)+(K+\delta+\varepsilon )b=\dfrac{K+1}{K}\lambda(A)+\dfrac{K+1}{2}b+\varepsilon b.$$
Ainsi par la minoration \eqref{lemme_Ruzsa}, on a
$$\dfrac{K+1}{K}\lambda(A)+\dfrac{K+1}{2}b+ \varepsilon b\geqslant\dfrac{K_A+1}{K_A}\lambda(A)+\dfrac{K_A+1}{2}b,$$
ce qui entraîne
$$\varepsilon\geqslant \dfrac{K-K_A}{2K_A}\left( K-K_A-1+2\delta\right),$$
or l'hypothèse \eqref{hyp_1} implique $\varepsilon\leqslant \dfrac{\delta}{K}\leqslant \dfrac{\delta}{K_A} ,$ et donc $K_A>K-1$. 
\end{proof}
Cette première égalité va nous permettre, en reprenant la preuve de l'inégalité \eqref{lemme_Ruzsa} de Ruzsa, d'obtenir des contrôles sur les inégalités entre les mesures d'ensembles modulo $D_B$. Ainsi nous saurons que ces inégalités sont proches du cas d'égalité, ce qui nous permettra d'utiliser le théorème \ref{Anne_Pablo} et ainsi obtenir des informations de structure modulo $D_B$ sur nos ensembles.
\subsubsection{Structures de $A$ et $B$ modulo $D_B$, premières informations}

Par l'hypothèse \eqref{A/B=K(K-1)2+Kdelta} et d'après \eqref{K=K_A}, on a
$$\dfrac{K_A+1}{K_A}\lambda(A)+\dfrac{K_A+1}{2}b= \lambda(A)+(K+\delta )b.$$
Définissons $\varepsilon^1_k$, $\varepsilon^2_k$ et $\varepsilon^3_k$ par
\begin{equation}\label{presque_égalités}
\left\lbrace \begin{array}{lll} \mu (\tilde{S_k})= \mu (\tilde{A}_{k-1})+\varepsilon^1_kb \ \ \ (2\leqslant k\leqslant K+1) \\ \mu (\tilde{S_k})= \mu (\tilde{A_k})+\mu (B)+\varepsilon^2_kb \ \ \ (1\leqslant k \leqslant K) \\ \mu (\tilde{S_k})= \varepsilon^3_kb \ \ \ (k\geqslant K +2) \end{array} \right. ,
\end{equation}
Comme $\tilde{A}_{k-1}\subseteq\tilde{S_k}$ (car $0,1\in B$) et comme $\tilde{A_k}+B\subseteq\tilde{S_k}$ on a $\varepsilon^i_k\geqslant 0$ pour tout $i$ et $k$. De plus en utilisant $K_A=K$
\begin{align*}
\lambda(A+B) & = \sum\limits_{\substack{k=1}}^{K_S}\mu(\tilde{S}_k) \\ & = \sum\limits_{\substack{k=1}}^{K+1}\left[\frac{k-1}{K_A}\left(\mu(\tilde{A}_{k-1})+\varepsilon^1_kb\right)+\frac{K_A-k+1}{K_A}\left(\mu(\tilde{A}_{k})+ b +\varepsilon^2_kb\right)\right]+\sum\limits_{\substack{k=K+2}}^{K_S}\varepsilon^3_kb \\ & = \frac{K_A+1}{K_A}\lambda(A)+\frac{K_A+1}{2}b+\left[ \sum_{k\geqslant K+2}\varepsilon_k^3+\sum_{k=0}^K \frac{k}{K}\left(\varepsilon_{k+1}^1+\varepsilon_{K+1-k}^2\right)\right]b \\ & =\lambda(A)+(K+\delta )b +\left[ \sum_{k\geqslant K+2}\varepsilon_k^3+ \sum_{k=0}^K \frac{k}{K}\left(\varepsilon_{k+1}^1+\varepsilon_{K+1-k}^2\right)\right]b.
\end{align*}
Ainsi nécessairement
\begin{equation}\label{eps=somme}
\varepsilon=\sum_{k\geqslant K+2}\varepsilon_k^3+\sum_{k=0}^K \frac{k}{K}\left(\varepsilon_{k+1}^1+\varepsilon_{K+1-k}^2\right).
\end{equation}
donc en particulier
\begin{equation}\label{maj_Somme_eps_3}
\sum_{k\geqslant K+2}\varepsilon_k^3\leqslant\varepsilon
\end{equation}
et donc pour tout $k\geqslant K+2$
\begin{equation}\label{maj_eps_3}
\mu(\tilde{S}_k)=\varepsilon^3_kb\leqslant \varepsilon b .
\end{equation}
Pour tout $k\in\left\{ 2,...,K+1\right\}$, on a également
\begin{equation}\label{maj_eps_1}
\varepsilon^1_k\leqslant \dfrac{K}{k-1}\varepsilon
\end{equation}
et pour tout $k\in\left\{ 1,...,K\right\}$ on a
\begin{equation}\label{maj_eps_2}
\varepsilon^2_k\leqslant \dfrac{K}{K+1-k}\varepsilon.
\end{equation}
Enfin plus brutalement, on a aussi
\begin{equation}\label{1ere_condition}
\varepsilon^i_k\leqslant K\varepsilon\leqslant K\frac{\rho_0}{K b}\leqslant\frac{ \rho_0}{ b},
\end{equation}
où la deuxième inégalité provient de l'hypothèse \eqref{hyp_3}.
Comme pour tout $k\in\left\lbrace 1,...,K \right\rbrace$, $\tilde{A_k}+B\subseteq \tilde{S}_k$, on a d'après la deuxième ligne de \eqref{presque_égalités}
$$\mu ( \tilde{A_k}+B)\leqslant\mu ( \tilde{S_k})\leqslant \mu ( \tilde{A_k})+b+\varepsilon^2_kb.$$ 
Grâce aux hypothèses \eqref{hyp_1}, \eqref{hyp_2}, \eqref{hyp_3} et \eqref{hyp_0}, nous pouvons appliquer le théorème \ref{Anne_Pablo} et ainsi, pour tout $k\in\left\lbrace 1,...,K \right\rbrace$, il existe un entier $m_k>0$ et deux intervalles $I_k$ et $J_k$ dans $\mathbb{T}$ tels que $m_k\tilde{A_k}\subseteq I_k$, $m_k B\subseteq J_k$ avec 
$$
\left\lbrace \begin{array}{ll} \mu (I_k)\leqslant \mu (\tilde{A}_{k})+\varepsilon^2_kb \\ \mu (J_k)\leqslant b+\varepsilon^2_kb \end{array} \right. . 
$$
On choisira pour $I_k$ l'enveloppe convexe de $m_k\tilde{A}_k$ et pour $J_k$ celle de $m_k B$ dans $\mathbb{T}$. Ceci termine la première étape de la preuve. Dans la suite, nous allons montrer que $m_k=1$ pour tout $k$, ce qui implique que $B\mod 1$ est contenu dans un intervalle de $\mathbb{T}$ qu'il remplit presque.

\subsection{$B$ modulo $D_B$ est proche d'un intervalle}

Commençons par prouver que $m_k$ ne dépend pas de $k$. C'est à dire, montrons que $m_{l}=m_k$ pour tous $k$ et $l$ appartenant à $\left\lbrace 1,...,K \right\rbrace$.
\begin{lemma}\label{tildes-presque-intervalles}
Il existe un entier $m>0$ tel que pour tout $k\in\left\{1,...,K\right\}$ on ait $m\tilde{A_k}\subseteq I_k$, $m B\subseteq J_k$ où $I_k$ et $J_k$ sont des intervalles tels que 
$$
\left\lbrace \begin{array}{ll} \mu (I_k)\leqslant \mu (\tilde{A}_{k})+\varepsilon^2_kb \\ \mu (J_k)\leqslant \mu (B)+\varepsilon^2_kb \end{array} \right. . 
$$
\end{lemma}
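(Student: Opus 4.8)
\emph{Plan.} It suffices to prove that $m_k$ does not depend on $k$: the rest of the statement then merely records, with the common value $m$, what was already obtained in the previous subsection. We follow de Roton's argument for the equality case, where one observes that $\tilde A_{k-1}$ is simultaneously $\tfrac1{m_k}$-periodic — there $\tilde A_{k-1}=\tilde A_k+B$ and $m_k(\tilde A_k+B)=I_k+J_k$ is an interval, so $\tilde A_{k-1}$ is the full preimage of $I_k+J_k$ under $x\mapsto m_kx$ — and $\tfrac1{m_{k-1}}$-periodic, being the preimage of the interval $I_{k-1}$ under $x\mapsto m_{k-1}x$; since the preimage of a proper subinterval of $\mathbb T$ has period group exactly $\tfrac1m\mathbb Z/\mathbb Z$, this forces $m_k\mid m_{k-1}$ and $m_{k-1}\mid m_k$, hence $m_k=m_{k-1}$. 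In our near-equality situation these periodicities survive only up to sets of small measure, and the whole point is to carry the errors through; this is precisely what the numerology \eqref{hyp_1}--\eqref{hyp_3} is designed for.

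The first point is that, as $x\mapsto m_kx$ preserves $\mu$ on $\mathbb T$, the inclusion $m_kB\subseteq J_k$ yields $B\subseteq Q_k:=\{x\in\mathbb T:\,m_kx\in J_k\}=\bigcup_{j=0}^{m_k-1}\bigl(B+\tfrac j{m_k}\bigr)$, which is $\tfrac1{m_k}$-periodic with $\mu(Q_k)=\mu(J_k)\le b(1+\varepsilon^2_k)$; since $\mu(B)=b$, the excess is small: $\mu(Q_k\setminus B)\le\varepsilon^2_kb$, and likewise $\mu\bigl(\{x:\,m_kx\in I_k\}\setminus\tilde A_k\bigr)\le\varepsilon^2_kb$. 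Consequently, for any $k,l\in\{1,\dots,K\}$ the two periodic sets $Q_k$ (period $\tfrac1{m_k}$) and $Q_l$ (period $\tfrac1{m_l}$) are close: $\mu(Q_k\mathbin{\triangle}Q_l)\le\mu(Q_k\setminus B)+\mu(Q_l\setminus B)\le(\varepsilon^2_k+\varepsilon^2_l)b$, which by \eqref{1ere_condition} is at most a fixed multiple of $K\varepsilon b\le\rho_0$. Moreover $\mu(Q_k)=\mu(J_k)<\tfrac12$, because $b<\tfrac12$ — the latter following from $K\ge2$ (Remarque~\ref{K>1}) together with \eqref{hyp_0}.

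It remains to deduce $m_k=m_l$ from this closeness. Suppose $m_k\ne m_l$; since they cannot both divide one another, we may pick an ordered pair $(p,q)$ drawn from $\{m_k,m_l\}$ with $q\nmid p$. Then $Q_p$ is close to the $\tfrac1q$-periodic set $Q_q$, hence almost invariant under $+\tfrac1q$: $\mu\bigl(Q_p\mathbin{\triangle}(Q_p+\tfrac1q)\bigr)\le CK\varepsilon b$ for an absolute constant $C$. On the other hand, writing $L=\operatorname{lcm}(m_k,m_l)$ and $t=\tfrac1q\bmod\tfrac1p\in(0,\tfrac1p)$ (nonzero as $q\nmid p$), one has $t\ge\tfrac1L$ and $\tfrac1p-t\ge\tfrac1L$; and since $Q_p$ restricted to each of its $p$ fundamental domains is an arc of length $\mu(J)/p$ (for the interval $J\in\{J_k,J_l\}$ corresponding to $p$), with $\mu(J)\in[\,b,\,b(1+K\varepsilon)\,]\subset[\,b,\tfrac12)$, a direct computation of the symmetric difference of an arc with its shift gives $\mu\bigl(Q_p\mathbin{\triangle}(Q_p+\tfrac1q)\bigr)\ge 2p\min\bigl(\tfrac1L,\tfrac bp\bigr)=2\min\bigl(\tfrac pL,b\bigr)$. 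Comparing the two bounds, $\min\bigl(\tfrac pL,b\bigr)\le\tfrac C2K\varepsilon b$. If the minimum is $b$, this forces $1\le\tfrac C2K\varepsilon$, contradicting \eqref{hyp_1}; if it is $\tfrac pL$, then $L\ge\tfrac{2p}{CK\varepsilon b}$, and since $L\le m_km_l$ this forces one of $m_k,m_l$ to be at least $\tfrac2{CK\varepsilon b}>\tfrac2{C\rho_0}$ by \eqref{hyp_3} — an astronomically large number. Hence $m_k=m_l$ unless some $m_k$ is astronomically large; the latter is ruled out by a separate, softer argument (a $B$ so uniformly spread at the tiny scale $\tfrac1{m_k}$ in $\mathbb T$ cannot satisfy $\lambda(A+B)=\lambda(A)+(K+\delta+\varepsilon)b$ under the hypothesis $(K+\delta+\cdots)b<D_B$). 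Chaining over $k$ yields $m_1=\cdots=m_K=:m$, which is the assertion of the lemma.

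The main obstacle is exactly this quantitative bookkeeping: each dilation $x\mapsto m_kx$ can multiply the measure of an error set by $m_k$, and passing to least common multiples inflates it further, so the defects and the sizes of the $m_k$ must be controlled together — which is why \eqref{hyp_3}, and the extravagantly small value of $\rho_0$, are needed; and the one genuinely new difficulty compared with the equality case is the exclusion of enormous $m_k$, which does not arise there.
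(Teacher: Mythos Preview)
Your setup matches the paper's: writing $Q_k=\{x\in\mathbb T:\,m_kx\in J_k\}$, both $Q_k$ and $Q_l$ contain $B$ and have measure $\le (1+\varepsilon^2_\bullet)b$, hence $\mu(Q_k\mathbin{\triangle}Q_l)\le(\varepsilon^2_k+\varepsilon^2_l)b$, and $\mu(Q_k)<\tfrac12$. (Minor slip: $Q_k$ is not equal to $\bigcup_j(B+j/m_k)$; it is the union of $m_k$ arcs of length $\mu(J_k)/m_k$, and $B$ is merely contained in it. This does not affect what follows.)

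The divergence, and the gap, is in how you extract a contradiction from $m_k\ne m_l$. Your shift argument gives $\mu(Q_p\mathbin{\triangle}(Q_p+1/q))\ge 2\min(p/L,b)$ with $L=\operatorname{lcm}(m_k,m_l)$, and this degenerates when $L$ is large relative to $p$; you then defer the ``astronomically large $m_k$'' case to a ``separate, softer argument'' that you do not give. That deferral is a genuine hole. The paper's later sections 2.3.2--2.3.4, which do rule out large $m$, use throughout that \emph{all} the $m_k$ coincide (e.g.\ the sets $\tilde A_K^l$, the definition of $\mathcal L_A$ via $\mu(I_K)$, and the bound \eqref{LA<=Km} all require the common $m$), so you cannot simply invoke them for a single large $m_k$. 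And the parenthetical intuition --- that $B$ spread at scale $1/m_k$ is incompatible with $\lambda(A+B)=\lambda(A)+(K+\delta+\varepsilon)b$ --- is not obvious: knowing $m_kB\subseteq J_k$ for one index $k$ does not by itself constrain $A$ enough to force a contradiction.

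The paper avoids this case split entirely. Assuming only $m_k<m_l$, it compares directly the $m_l$ equally spaced arc-centres of $Q_l$ with the $m_k$ arcs of $Q_k$: since $\mu(Q_k)<\tfrac12$, the gaps of $Q_k$ are longer than its arcs, and a combinatorial argument (two cases, according to whether two consecutive $Q_l$-centres lie in the same arc of $Q_k$ or in consecutive ones) shows that at least a fixed proportion of the $m_l$ centres fall outside $Q_k$; as each such centre contributes at least half of its arc to $Q_l\setminus Q_k$, one gets $\mu(Q_k\mathbin{\triangle}Q_l)\ge b/8$ uniformly in $m_k,m_l$. This immediately contradicts the upper bound via \eqref{hyp_1}, with no residual large-$m$ case. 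Replace your shift computation by this centre-counting argument and the proof closes.
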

\begin{proof}
Il s'agit de prouver que $m_k=m_l$ pour tout $k,l\in\left\lbrace 1,...,K \right\rbrace$. Tout d'abord par \eqref{maj_eps_2} puis par \eqref{hyp_0}, on a 
\begin{equation}\label{mu(Jk)<1/2}
\mu (J_k)\leqslant b+\varepsilon^2_kb  \leqslant (1+K\varepsilon) b < \dfrac{1+K\varepsilon}{K+\delta+2\varepsilon}\leqslant\dfrac{1}{2}.
\end{equation}
On va supposer qu'il existe des entiers $k$ et $l$ tels que $m_k<m_l$. On sait que $m_k B\subseteq J_k$ et $m_l B\subseteq J_l$ avec 
$$
\left\lbrace \begin{array}{ll} \mu (J_k)\leqslant \mu (B)+\varepsilon^2_kb \\ \mu (J_l)\leqslant \mu (B)+\varepsilon^2_lb \end{array} \right. .
$$
Comme $m_k^{-1}J_k$ et $m_l^{-1}J_l$ contiennent tous deux $B$, on a
$$\mu\left(m_k^{-1}J_k\cap m_l^{-1}J_l\right)\geqslant  b .$$
Ainsi d'une part, on a
\begin{equation}\label{maj_J_kDeltaJ_l}
\mu\left( \left( m_k^{-1}J_k\right)\Delta \left( m_l^{-1}J_l\right)\right) \leqslant \varepsilon^2_kb+\varepsilon^2_lb.
\end{equation}
D'autre part dans $\mathbb{T}$, $m_k^{-1}J_k$ est composé de $m_k$ intervalles de taille $\mu(J_k)/m_k$ uniformément espacés (deux centres consécutifs sont distants de $1/m_k$). De même $m_l^{-1}J_l$ est composé de $m_l$ intervalles plus petits (de taille $\mu(J_l)/m_l$) dont les centres sont distants de $1/m_l$ ($<1/m_k$). Pour obtenir une absurdité, on montre qu'une proportion importante des centres des intervalles composant $m_l^{-1}J_l$ ne sont pas dans $m_k^{-1}J_k$, ce qui provient du fait que la taille de $\mathbb{T}\setminus m_k^{-1}J_k$ est supérieure à celle de $m_k^{-1}J_k$ (cf. \eqref{mu(Jk)<1/2}). On utilise ensuite que si un centre d'un intervalle $I$ de $m_l^{-1}J_l$ est dans $\mathbb{T}\setminus m_k^{-1}J_k$, alors $\mu\left( I\setminus m_k^{-1}J_k\right)\geqslant\mu(I)/2$, ce qui provient du fait que la distance entre deux intervalles consécutifs de $m_k^{-1}J_k$ est supérieure à la taille d'un intervalle $I$ de $m_l^{-1}J_l$. On obtient ainsi l'inégalité (non optimale mais suffisante) $\mu\left( \left( m_k^{-1}J_k\right)\Delta \left( m_l^{-1}J_l\right)\right) \geqslant b/8$, ce qui contredit \eqref{maj_J_kDeltaJ_l} (par l'hypothèse \eqref{hyp_1}).

La première étape est moins immédiate que les suivantes. Deux cas sont en effet à distinguer. Si deux centres consécutifs d'intervalles de $m_l^{-1}J_l$ sont dans $m_k^{-1}J_k$ (sinon, la moitié au moins des centres est dans $m_k^{-1}J_k$), alors soit ces deux centres sont dans le même intervalle de $m_k^{-1}J_k$, auquel cas un ensemble de centres consécutifs dans $\mathbb{T}\setminus m_k^{-1}J_k$ succède à un ensemble de centres consécutifs dans $m_k^{-1}J_k$
$$\begin{minipage}[l]{17cm}
\includegraphics[height=3.1cm]{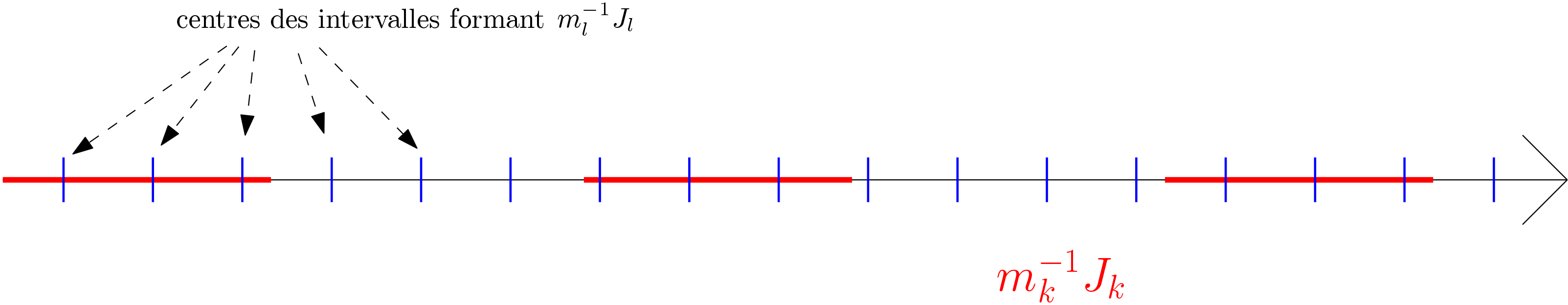}
\end{minipage}$$
soit ces deux centres sont dans deux intervalles consécutifs de $m_k^{-1}J_k$, auquel cas les centres vont se décaler vers le bord droit des intervalles de $m_k^{-1}J_k$ jusqu'à sortir de $m_k^{-1}J_k$ et il faudra alors au moins presque autant de décalages (donc de centres consécutifs) hors de $m_k^{-1}J_k$ pour atteindre à nouveau $m_k^{-1}J_k$.
$$\begin{minipage}[l]{17cm}
\includegraphics[height=5.78cm]{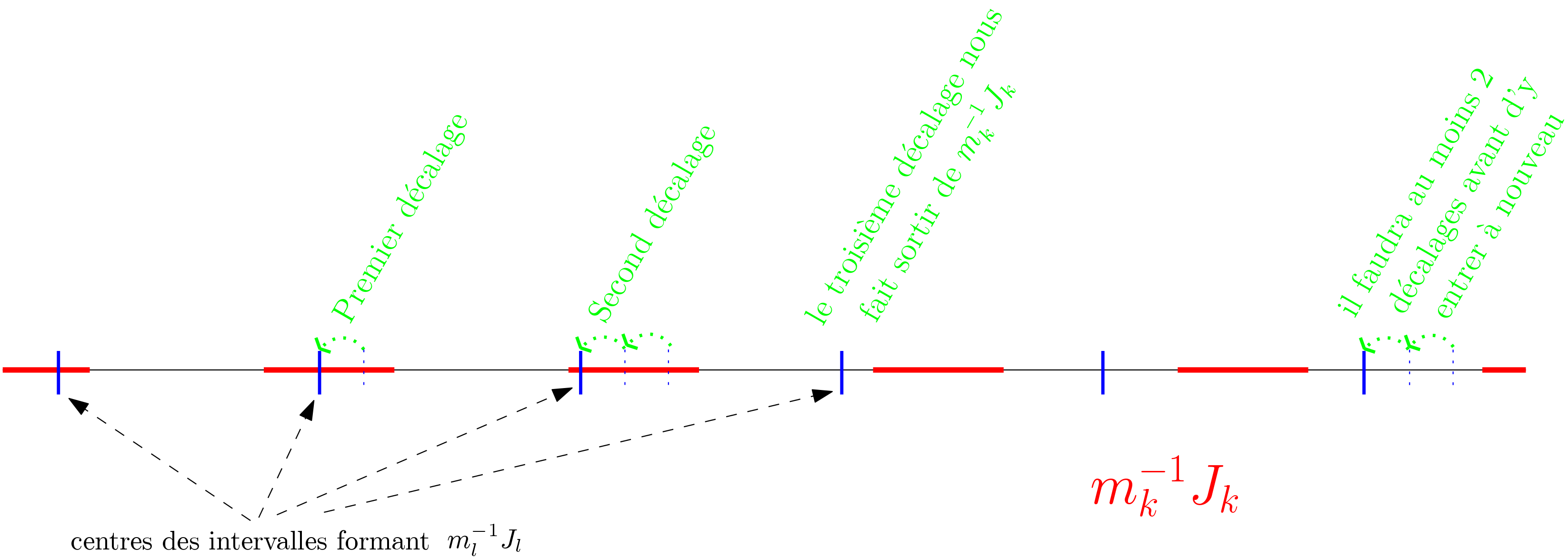}
\end{minipage}$$
\end{proof}
On montre ainsi que $m$ ne dépend pas de $k$\footnote{Le lecteur intéressé pourra consulter https://hal.univ-lorraine.fr/tel-03368154v1 où les détails de cette étape sont donnés.}. 
Commençons par estimer $\mu(J)$, $\mu(\tilde{A}_k)$ et $\mu(I_k)$. Posons $J=\bigcap\limits_{\substack{k=1}}^K J_k$. Par construction $m^{-1}J$ contient $B$, et de plus on a
$$\mu(J)\leqslant b+\min\limits_{\substack{i=1,...,K}}\varepsilon^2_i b, $$
d'où par \eqref{maj_eps_2}
\begin{equation}\label{maj_mu(J)}
\mu ( J)< \left(1+\varepsilon\right)b.
\end{equation}
Enfin comme $\mu(J_k)<1/2$ (cf. preuve du lemme \ref{tildes-presque-intervalles}), on a également
\begin{equation}\label{mu(J)<1/2}
\mu ( J)\leqslant \frac{1}{2}.
\end{equation}
\begin{lemma}\label{mu(A_K)}
On a 
$$\mu\left( \tilde{A}_K\right)=\delta b +\dfrac{1}{K}\sum\limits_{\substack{k=1}}^{K-1} k\left(\varepsilon^1_{k+1} -\varepsilon^2_{k+1}\right) b .$$
\end{lemma}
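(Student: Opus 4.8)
The plan is to exploit the two distinct expressions for $\mu(\tilde{S}_k)$ that are simultaneously available in the overlapping range $2\leqslant k\leqslant K$. Since $\varepsilon^1_k$ is defined for $2\leqslant k\leqslant K+1$ and $\varepsilon^2_k$ for $1\leqslant k\leqslant K$, the first two lines of \eqref{presque_égalités} both hold for each $k\in\{2,\dots,K\}$, so that
$$\mu(\tilde{A}_{k-1})+\varepsilon^1_k b=\mu(\tilde{S}_k)=\mu(\tilde{A}_k)+b+\varepsilon^2_k b,$$
hence $\mu(\tilde{A}_{k-1})-\mu(\tilde{A}_k)=b+(\varepsilon^2_k-\varepsilon^1_k)b$ for $2\leqslant k\leqslant K$. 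This is a telescoping relation for the (decreasing) sequence $\mu(\tilde{A}_1),\dots,\mu(\tilde{A}_K)$.

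Next I would solve this recursion downward: summing it from $k=j+1$ to $k=K$ gives, for each $j\in\{1,\dots,K\}$,
$$\mu(\tilde{A}_j)=\mu(\tilde{A}_K)+(K-j)b+b\sum_{k=j+1}^{K}\big(\varepsilon^2_k-\varepsilon^1_k\big).$$
Then I would sum over $j=1,\dots,K$. Here I use $\sum_{j=1}^{K}\mu(\tilde{A}_j)=\lambda(A)$, which is \eqref{lambda(E)=somme_mu(Ek)} applied to $A$ together with $K_A=K$ from \eqref{K=K_A}; the elementary identity $\sum_{j=1}^{K}(K-j)=\tfrac{K(K-1)}{2}$; and the rearrangement $\sum_{j=1}^{K}\sum_{k=j+1}^{K}(\varepsilon^2_k-\varepsilon^1_k)=\sum_{k=2}^{K}(k-1)(\varepsilon^2_k-\varepsilon^1_k)$ obtained by swapping the order of summation (for fixed $k$, the index $j$ runs over $k-1$ values).

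Finally I would substitute $\lambda(A)=\big(\tfrac{K(K-1)}{2}+K\delta\big)b$ from \eqref{A/B=K(K-1)2+Kdelta}: the $\tfrac{K(K-1)}{2}b$ terms cancel, leaving $K\delta b=K\mu(\tilde{A}_K)+b\sum_{k=2}^{K}(k-1)(\varepsilon^2_k-\varepsilon^1_k)$, so that dividing by $K$ and re-indexing $k\mapsto k+1$ yields exactly $\mu(\tilde{A}_K)=\delta b+\tfrac{1}{K}\sum_{k=1}^{K-1}k(\varepsilon^1_{k+1}-\varepsilon^2_{k+1})b$. There is no genuine obstacle in this lemma: it is pure bookkeeping, and the only points deserving attention are checking that both lines of \eqref{presque_égalités} are valid on the whole range $2\leqslant k\leqslant K$ and carrying out the summation swap correctly.
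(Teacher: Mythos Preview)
Your proof is correct and follows essentially the same route as the paper: equate the two expressions for $\mu(\tilde{S}_k)$ from \eqref{presque_égalités} on the range $2\leqslant k\leqslant K$, telescope down to express each $\mu(\tilde{A}_j)$ in terms of $\mu(\tilde{A}_K)$, sum over $j$ using $\sum_j\mu(\tilde{A}_j)=\lambda(A)$ and \eqref{A/B=K(K-1)2+Kdelta}, then solve for $\mu(\tilde{A}_K)$. The only cosmetic difference is that you spell out the summation swap explicitly, whereas the paper writes the double sum directly in re-indexed form.
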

\begin{proof}
D'après \eqref{presque_égalités}, on a pour tout $k=2,...,K$
$$\mu\left(\tilde{A}_{k-1}\right)+\varepsilon^1_k b = \mu\left(\tilde{S}_{k}\right)=\mu\left(\tilde{A}_{k}\right)+b+\varepsilon^2_k b,$$
donc
$$\mu\left(\tilde{A}_{k-1}\right)=\mu\left(\tilde{A}_{k}\right)+b+\varepsilon^2_k b-\varepsilon^1_k b,$$
d'où en itérant,
\begin{equation}\label{Ak_en_fct_de_AK}
\mu\left(\tilde{A}_{k}\right)=\mu\left(\tilde{A}_{K}\right)+(K-k)b+\sum\limits_{\substack{i=k+1}}^K \left(\varepsilon^2_i -\varepsilon^1_i\right) b.
\end{equation}
On obtient finalement
\begin{align*}
\lambda(A) & = \sum\limits_{\substack{k=1}}^K\mu\left(\tilde{A}_{k}\right)  =\sum\limits_{\substack{k=1}}^K \left[ \mu\left(\tilde{A}_{K}\right)+(K-k)b+\sum\limits_{\substack{i=k+1}}^K \left(\varepsilon^2_i -\varepsilon^1_i\right) b\right] \\ & = K\mu\left(\tilde{A}_{K}\right)+\dfrac{K(K-1)}{2}b+\sum\limits_{\substack{k=1}}^{K-1} k\left(\varepsilon^2_{k+1} -\varepsilon^1_{k+1}\right) b,
\end{align*}
ainsi
\begin{align*}
\mu\left(\tilde{A}_{K}\right) & = \dfrac{\lambda(A)}{K}-\dfrac{K-1}{2}b-\dfrac{1}{K}\sum\limits_{\substack{k=1}}^{K-1} k\left(\varepsilon^2_{k+1} -\varepsilon^1_{k+1}\right) b \\ & = \delta b +\dfrac{1}{K}\sum\limits_{\substack{k=1}}^{K-1} k\left(\varepsilon^1_{k+1} -\varepsilon^2_{k+1}\right) b.
\end{align*}
\end{proof}
Un encadrement précis de la mesure de $I_K$ découle directement de ce lemme et fait l'objet du corollaire suivant.
\begin{cor}\label{mu(I_K)}
On a 
$$\delta b -K\left(\log K -1\right)\varepsilon b \leqslant \mu\left(I_K\right)\leqslant \delta b +\dfrac{1}{K}\sum\limits_{\substack{k=1}}^{K-1} k\left(\varepsilon^1_{k+1} -\varepsilon^2_{k+1}\right) b + \varepsilon^2_K b ,$$
et donc en particulier
$$ \mu\left(I_K\right)\leqslant \delta b +2 \varepsilon b .$$
\end{cor}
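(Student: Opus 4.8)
Le plan consiste à déduire les deux inégalités du Lemme~\ref{mu(A_K)} combiné à la majoration $\mu(I_K)\leqslant\mu(\tilde A_K)+\varepsilon^2_K b$ fournie par l'application du Théorème~\ref{Anne_Pablo} faite à la fin de la sous-section précédente. En substituant dans cette majoration la valeur exacte de $\mu(\tilde A_K)$ donnée par le Lemme~\ref{mu(A_K)}, on obtient immédiatement l'inégalité de droite du corollaire ; il ne reste donc qu'à établir les deux bornes restantes.

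Pour la majoration simplifiée $\mu(I_K)\leqslant\delta b+2\varepsilon b$, on commence par regrouper les termes en $\varepsilon^2_K$ : dans $-\tfrac1K\sum_{k=1}^{K-1}k\varepsilon^2_{k+1}b$, le terme d'indice $k=K-1$ vaut $-\tfrac{K-1}{K}\varepsilon^2_Kb$, et ajouté au terme supplémentaire $+\varepsilon^2_Kb$ il ne reste que $\tfrac1K\varepsilon^2_Kb$, les autres termes négatifs étant simplement supprimés. Il suffit alors de majorer $\tfrac1K\sum_{k=1}^{K-1}k\varepsilon^1_{k+1}\leqslant\varepsilon$, ce qui est immédiat par \eqref{eps=somme} puisque $\tfrac1K\sum_{k=0}^{K}k\varepsilon^1_{k+1}$ y apparaît comme un bloc d'une somme de termes positifs valant $\varepsilon$, et de majorer $\tfrac1K\varepsilon^2_K\leqslant\varepsilon$ via \eqref{maj_eps_2} appliquée avec $k=K$. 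En ajoutant ces deux contributions, chacune égale à $\varepsilon b$, à $\delta b$, on conclut.

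Pour la minoration, on utilise que $I_K$ contient $m\tilde A_K$ et que la dilatation $x\mapsto mx$ préserve la mesure de Haar sur $\mathbb T$, de sorte que $\mu(mE)\geqslant\mu(E)$ pour tout $E$ (car $E$ est contenu dans l'image réciproque de $mE$), d'où $\mu(I_K)\geqslant\mu(m\tilde A_K)\geqslant\mu(\tilde A_K)$. En supprimant les termes positifs $\varepsilon^1_{k+1}$ dans le Lemme~\ref{mu(A_K)}, on obtient $\mu(I_K)\geqslant\delta b-\tfrac1K\sum_{k=1}^{K-1}k\varepsilon^2_{k+1}b$, et il reste à montrer que ce terme d'erreur est au plus $K(\log K-1)\varepsilon b$. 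En injectant \eqref{maj_eps_2}, on le ramène à $\varepsilon b$ fois une somme de la forme $\sum_{k=1}^{K-1}\frac{k}{K-k}$ ; le changement d'indice $j=K-k$ la transforme en une somme de type harmonique que l'on compare à $\log K$. Cette dernière étape — ramener cette somme harmonique à la constante annoncée $K(\log K-1)$, éventuellement en traitant à part les indices $k$ proches de $K$ (par exemple en majorant $\varepsilon^2_K$ à l'aide de l'identité globale \eqref{eps=somme} plutôt que de la majoration ponctuelle \eqref{maj_eps_2}) — est le seul point véritablement calculatoire ; tout le reste n'est qu'un jeu d'écriture avec les inégalités \eqref{eps=somme}, \eqref{maj_eps_1} et \eqref{maj_eps_2} déjà à notre disposition.
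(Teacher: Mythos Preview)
Your proposal is correct and follows exactly the paper's route: the upper bounds via $\mu(I_K)\leqslant\mu(\tilde A_K)+\varepsilon^2_Kb$ combined with Lemma~\ref{mu(A_K)} and then \eqref{eps=somme} and \eqref{maj_eps_2}, and the lower bound via $\mu(I_K)\geqslant\mu(\tilde A_K)$, dropping the $\varepsilon^1$ terms, applying \eqref{maj_eps_2}, and the substitution $j=K-k$ to reach the harmonic sum. Your care about justifying $\mu(I_K)\geqslant\mu(\tilde A_K)$ through $m\tilde A_K\subseteq I_K$ (the paper simply writes ``$\tilde A_K\subseteq I_K$'') and your caution about the final comparison of $\sum_{k=1}^{K-1}\tfrac{k}{K-k}$ to $K(\log K-1)$ are both well-placed; the paper asserts these steps without further comment.
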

\begin{proof}
La minoration provient du fait que $\tilde{A}_{K}\subseteq I_K$, donc par le lemme \ref{mu(A_K)}
$$\mu(I_K)  \geqslant \mu\left(\tilde{A}_{K}\right) \geqslant \delta b -\dfrac{b}{K}\sum\limits_{\substack{k=1}}^{K-1} k\varepsilon^2_{k+1}   \geqslant \delta b -\dfrac{b}{K}\sum\limits_{\substack{k=1}}^{K-1} k\dfrac{K}{K+1-(k+1)}\varepsilon,$$
où la dernière inégalité est obtenue grâce à \eqref{maj_eps_2}. Ainsi
$$\mu(I_K) \geqslant \delta b -b\varepsilon\sum\limits_{\substack{k=1}}^{K-1} \dfrac{k}{K-k}  \geqslant \delta b -b\varepsilon\sum\limits_{\substack{k=1}}^{K-1}\left( \dfrac{K}{k}-1\right)  \geqslant \delta b -b\varepsilon K\left(\log K -1\right).$$
La majoration est immédiate par le lemme \ref{mu(A_K)} puisque $\mu\left(I_K\right)\leqslant \mu\left(\tilde{A}_{K}\right)+ \varepsilon^2_K b$. Ainsi on a par \eqref{eps=somme} et \eqref{maj_eps_2}
\begin{align*}
\mu\left(I_K\right) & \leqslant \delta b +\dfrac{1}{K}\sum\limits_{\substack{k=1}}^{K-1} k\left(\varepsilon^1_{k+1} -\varepsilon^2_{k+1}\right) b + \varepsilon^2_K b \leqslant \delta b +\dfrac{1}{K}\sum\limits_{\substack{k=1}}^{K-1}\varepsilon^1_{k+1} b -\frac{K-1}{K}\varepsilon^2_K b + \varepsilon^2_K b \\ & \leqslant \delta b +\dfrac{1}{K}\sum\limits_{\substack{k=1}}^{K-1}\varepsilon^1_{k+1} b +\frac{1}{K}\varepsilon^2_K b \leqslant \delta b +2 \varepsilon b.
\end{align*}
\end{proof}
À présent afin de simplifier l'argumentation, on va translater $A$ afin qu'il soit suffisamment loin de $0$. Maintenant que nous savons que $m$ ne dépend pas de $k$, et comme $\tilde{A}_{k+1}\subseteq\tilde{A}_{k}$ quel que soit $k\in\left\lbrace 1,...,K-1 \right\rbrace$, on a
$$m\tilde{A}_K\subseteq m\tilde{A}_{K-1}\subseteq ...\subseteq m\tilde{A}_2\subseteq m\tilde{A}_1.$$
\label{def_I_k}
Ainsi, comme $I_k$ est l'enveloppe convexe de $m\tilde{A}_k$, quel que soit $k\in\left\lbrace 2,...,K \right\rbrace$ on a
\begin{equation}\label{I_k_dans_I_k-1}
I_{k}\subseteq I_{k-1}.
\end{equation}
Par \eqref{Ak_en_fct_de_AK}, le lemme \ref{mu(A_K)} et l'hypothèse \eqref{hyp_0}, on a
$$ \mu ( \tilde{A_1})+\varepsilon^2_1 b< 1-b-\varepsilon^2_1 b ,  $$
donc d'après le lemme \ref{tildes-presque-intervalles}, on a
$$\mu(I_1)  < 1-b-\varepsilon^2_1 b  < 1-b.$$
Ainsi il existe $a\in\mathbb{T}$ tel que $d\left( 0, I_1+a\right)>\dfrac{b}{2}$ et donc, quitte à translater $A$ par $\dfrac{a}{m}$, on peut supposer que $d\left( 0, I_1\right)>\dfrac{b}{2}$. En effet, quel que soit $k\in\left\lbrace 1,...,K \right\rbrace$, on a
$$m\widetilde{\left( A+\frac{a}{m}\right)}_k=m \left(\frac{a}{m}+\tilde{A}_k\right)=a+m\tilde{A}_k\subseteq a+I_k.$$
De cette manière, et par \eqref{I_k_dans_I_k-1}, on peut supposer que
\begin{equation}\label{I_k-loin-de-0}
d\left( 0, I_k\right)>\dfrac{b}{2},
\end{equation}
quel que soit $k\in\left\lbrace 1,...,K \right\rbrace$. En particulier, $0\notin I_1$ et $0\notin \tilde{A}_K$. 

Nous allons désormais montrer que $m=1$.

\subsubsection{Premières informations et début de la stratégie pour montrer que $m=1$}

Rappelons que $\pi$ désigne la projection de $\mathbb{R}$ sur $\mathbb{T}$. Pour plus de lisibilité, posons $J'=\pi^{-1}\left( J\right)\cap\left[0,1\right]$ et $I_1'=\pi^{-1}\left( I_1\right)\cap\left[0,1\right].$
Écrivons $J'=J_+\sqcup J_-$ où $J_+$ est un intervalle fermé de $[0,1/2[$ et $J_-$ un intervalle fermé de $[1/2,1]$ (bien définis car $0\in B$ par hypothèse, $B\subseteq J'$ et $\lambda(J')=\mu(J)<\frac{1}{2}$ d'après \eqref{mu(J)<1/2}). On pose $b_+=\mu(J_+)$ et $b_-=\mu(J_-)$. Supposons que $m>1$. Comme $0\in B$ \\
$B=\bigcup\limits_{\substack{l=0}}^m B_l$ où $B_0=B\cap \dfrac{J_+}{m}$, $B_m=B\cap\dfrac{J_-+m-1}{m}$ et $B_l=B\cap \dfrac{l+\big( J_+\cup (J_--1)\big)}{m}$ ($0<l<m$). 
Par \eqref{I_k-loin-de-0}, $0\notin I_1$ et donc $I_1'$ est un intervalle de $\mathbb{R}$. 
\label{def_A_i}
On a donc simplement $A=\bigsqcup\limits_{\substack{l=0}}^L A_l$ où $A_l=A\cap \dfrac{l+I_1'}{m}$ et $L=\max\left\lbrace l \ \vert \ A_l\neq\varnothing \right\rbrace$. 

Écrivons $A+B=S=\bigcup\limits_{\substack{l=0}}^{L+m}S_l$ où $S_l=(A+B)\cap\dfrac{l+\big( J_+\cup (J_--1)\big)+I_1'}{m}$. Remarquons que pour tout $(i,j)\in\left[ 0,m\right]^2$ tel que $A_i\neq\varnothing$, $B_j\neq\varnothing$ et $i+j=l$, on a $A_i+B_j\subseteq S_{l}$. Enfin posons $\mathscr{L}_A=\left\lbrace l\geqslant 0 \ \vert \ A_l\neq\varnothing \right\rbrace$ et $\mathscr{L}_B=\left\lbrace l\geqslant 0 \ \vert \ B_l\neq\varnothing \right\rbrace$. On rappelle que $\max\mathscr{L}_A=L$, que $\max\mathscr{L}_B=m$ et que $\min\mathscr{L}_B=0$. 
$$ $$
$$\begin{minipage}[l]{17cm}
\includegraphics[height=3.8cm]{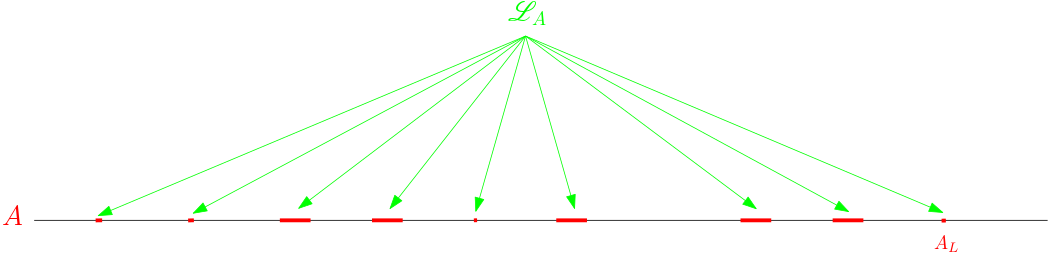}
\end{minipage}$$
$$ $$
$\mathscr{L}_A$ contient tous les morceaux non vides de $A$ mais donc également ceux de très petite mesure. Nous allons également avoir besoin de nous concentrer sur les morceaux offrant une certaine contribution au niveau de la mesure. Pour celà, on pose
$$\mathcal{L}_A=\left\lbrace l\geqslant 0 \ \vert \ \lambda(A_l)\geqslant\dfrac{\mu(I_{K})}{m}-\dfrac{f\left(\varepsilon,K\right)}{m}b \right\rbrace,$$
où $f\left(\varepsilon,K\right)$ est une fonction qu'on optimisera,
$i_A=\min\mathcal{L}_A$, $I_A=\max\mathcal{L}_A$ et $B_M$ est le plus gros "morceau" de $B$, c'est à dire tel que $\lambda(B_M)=\max\left\lbrace \lambda(B_i) \ \vert \ i\in\mathscr{L}_B \right\rbrace$. 
$$ $$
$$\begin{minipage}[l]{17cm}
\includegraphics[height=3.8cm]{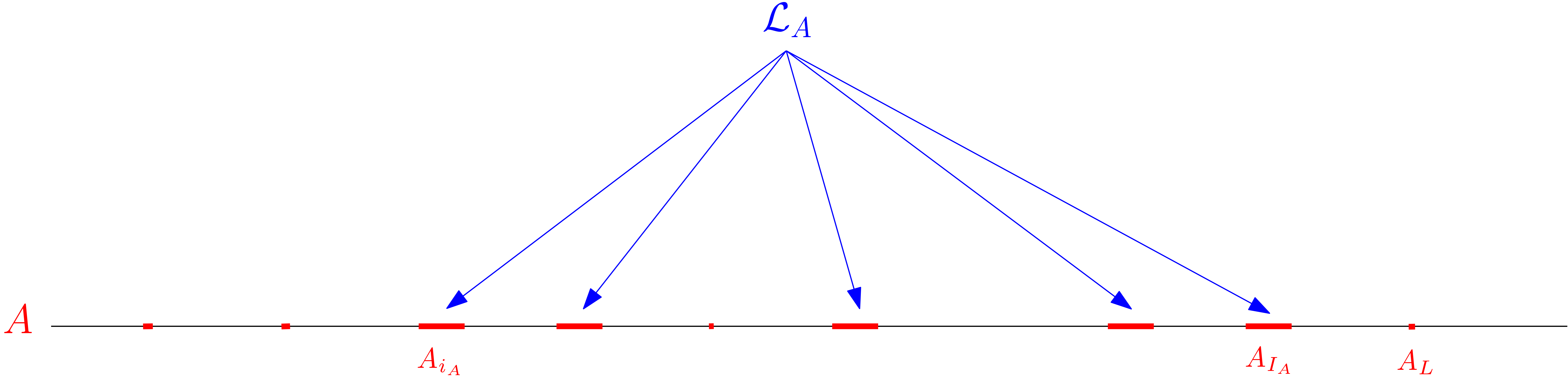}
\end{minipage}$$
$$ $$
\begin{Remarque}\label{pasdepbpourlechoixdef}
Pour que $A_l$ soit de mesure strictement positive quel que soit $l\in\mathcal{L}_A$, il faut que ${\mu(I_K)-f\left(\varepsilon,K\right)b>0}$. Or rappelons que par le corollaire \ref{mu(I_K)}
$$\mu(I_K)-f\left(\varepsilon,K\right)b\geqslant\left(\delta- K\left(\log(K)-1\right)\varepsilon-f\left(\varepsilon,K\right)\right)b.$$
Il suffit donc que $\left(\delta-K\left( \log(K)-1\right)\varepsilon-f\left(\varepsilon,K\right)\right)$ soit strictement positif. On sera amenés à choisir soit $f\left(\varepsilon,K\right)=K\varepsilon^{1/2}$ soit $f\left(\varepsilon,K\right)=K\varepsilon^{1/3}$ et on imposera donc
$$\delta-K\left( \log(K)-1\right)\varepsilon-K\varepsilon^{1/3}>0 ,$$
ce qui est assuré par l'hypothèse \eqref{hyp_1}, $\varepsilon<\left(\dfrac{\delta}{3K}\right)^3$.
\end{Remarque}
Ainsi grâce à cette hypothèse, on a bien \label{hyp} $\dfrac{\mu(I_{K})}{m}-\dfrac{f\left(\varepsilon,K\right)}{m}b>0.$
Finalement, quel que soit $i\in\mathcal{L}_A$, on a bien $\lambda(A_i)>0$, donc $\lambda(S_{i+j})\geqslant\lambda(A_i)+\lambda(B_j)$ pour tout $j\in\mathscr{L}_B $. Ainsi on a
\begin{align*}
\lambda(A+B) & =\lambda(A)+\left(K+\delta+\varepsilon\right)b =\sum\limits_{\substack{l=0}}^{L+m}\lambda(S_l) \\ & \geqslant \sum\limits_{\substack{l=0}}^{i_A}\lambda(S_l) + \sum\limits_{\substack{l=i_A+1}}^{i_A+M}\lambda(S_l) + \sum\limits_{\substack{l=i_A+M+1}}^{I_A+M}\lambda(S_l) + \sum\limits_{\substack{l=I_A+M+1}}^{I_A+m}\lambda(S_l)+\sum\limits_{\substack{l=I_A+m+1}}^{L+m}\lambda(S_l) \\ & \geqslant \sum\limits_{\substack{l=0 \\ l\in\mathscr{L}_A}}^{i_A}\lambda(B_0+A_l) + \sum\limits_{\substack{l=1 \\ l\in\mathscr{L}_B}}^{M}\lambda(A_{i_A}+B_l) + \sum\limits_{\substack{l=i_A+1 \\ l\in\mathscr{L}_A}}^{I_A}\lambda(B_{M}+A_l) \\ & \ \ \ + \sum\limits_{\substack{l=M+1 \\ l\in\mathscr{L}_B}}^{m}\lambda(A_{I_A}+B_l)+\sum\limits_{\substack{l=I_A+1 \\ l\in\mathscr{L}_A}}^{L}\lambda(B_m+A_l) \\ & \geqslant \sum\limits_{\substack{l=0 \\ l\in\mathscr{L}_A}}^{i_A}\left(\lambda(B_0)+\lambda(A_l)\right) + \sum\limits_{\substack{l=1 \\ l\in\mathscr{L}_B}}^{M}\left(\lambda(A_{i_A})+\lambda(B_l)\right) + \sum\limits_{\substack{l=i_A+1 \\ l\in\mathscr{L}_A}}^{I_A}\left(\lambda(B_{M})+\lambda(A_l)\right) \\ & \ \ \ + \sum\limits_{\substack{l=M+1 \\ l\in \mathscr{L}_B}}^{m}\left(\lambda(A_{I_A})+\lambda(B_l)\right) +\sum\limits_{\substack{l=I_A+1 \\ l\in\mathscr{L}_A}}^{L}\left(\lambda(B_m)+\lambda(A_l)\right) \\ & \geqslant \lambda(A)+b + \lambda(B_M)\times\#\left(\mathscr{L}_A\cap \left\llbracket i_A+1, I_A \right\rrbracket\right)+\lambda(A_{i_A})\times\#\left(\mathscr{L}_B\cap \left\llbracket 1, M \right\rrbracket\right)\\ & \ \ \ +\lambda(A_{I_A})\times\#\left(\mathscr{L}_B\cap \left\llbracket M+1, m \right\rrbracket\right) \\ & \geqslant \lambda(A)+b + \#\left(\mathscr{L}_A\cap \left\llbracket i_A+1, I_A \right\rrbracket\right)\lambda(B_M)+\left(\#\mathscr{L}_B-1\right)\left(\dfrac{\mu(I_{K})}{m}-\dfrac{f\left(\varepsilon,K\right)}{m}b\right) \\ & \geqslant \lambda(A)+b + \left(\#\mathcal{L}_A-1\right)\lambda(B_M)+\left(\#\mathscr{L}_B-1\right)\left(\dfrac{\mu(I_{K})}{m}-\dfrac{f\left(\varepsilon,K\right)}{m}b\right) .
\end{align*}
Ainsi, on a
\begin{equation}\label{debut_m=1}
\lambda(A+B)\geqslant \lambda(A)+b + \left(\#\mathcal{L}_A-1\right)\lambda(B_M)+\left(\#\mathscr{L}_B-1\right)\left(\dfrac{\mu(I_{K})}{m}-\dfrac{f\left(\varepsilon,K\right)}{m}b\right) .
\end{equation}
Pour pouvoir conclure avec cette stratégie, nous avons besoin d'une minoration de $\lambda(B_M)$, de $\#\mathscr{L}_B$ et de $\#\mathcal{L}_A$. Ce qui nous amène aux lemmes de la partie suivante.

\subsubsection{Minoration de $\lambda(B_M)$, $\#\mathscr{L}_B$ et $\#\mathcal{L}_A$}

Les minorations de $\lambda(B_M)$ et $\#\mathscr{L}_B$ font chacune l'objet de l'un des deux lemmes suivants mais pour obtenir le moins de contraintes possible, nous différencierons deux cas pour la minoration de $\#\mathcal{L}_A$. Ainsi nous allons établir deux minorations de $\#\mathcal{L}_A$, une valable quand $m$ est grand et l'autre valable quand $m$ est petit. Elles feront l'objet des deux derniers lemmes de cette partie. Nous n'aurons alors qu'à appliquer la minoration correspondante suivant la taille de $m$ afin d'obtenir une absurdité dans le cas $m\geqslant 2$ en ayant le moins de contraintes possible.
\begin{lemma}\label{min_BM}
Si $m\geqslant 2$ et $B_M$ est le plus gros "morceau" de $B$, c'est à dire tel que \\ ${\lambda(B_M)=\max\left\lbrace \mu(B_i) \ \vert \ i\in\mathscr{L}_B \right\rbrace}$, on a 
$$\lambda(B_M)\geqslant\dfrac{1-\varepsilon}{m}b.$$
\end{lemma}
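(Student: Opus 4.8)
The plan is to show that if the largest piece $B_M$ were too small, then the decomposition $B=\bigcup_{l=0}^m B_l$ would force $B$ to be spread over too many small intervals, contradicting the near-optimality of the sumset. Recall from the setup that $mB\subseteq J$ with $\mu(J)<(1+\varepsilon)b$ (inequality \eqref{maj_mu(J)}), so $B$ is contained in $m^{-1}J$, which in $\mathbb{T}$ is a union of $m$ intervals each of length $\mu(J)/m$, with consecutive centres at distance $1/m$. Each nonempty piece $B_l$ lives in one such small interval (after lifting to $[0,1]$, the pieces $B_0$ and $B_m$ being the two halves $J_+/m$ and $(J_-+m-1)/m$ coming from the single translate class, and $B_l$ for $0<l<m$ sitting inside $l+(J_+\cup(J_--1))$ scaled by $1/m$). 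The first step is therefore to note $b=\lambda(B)=\sum_{l\in\mathscr{L}_B}\lambda(B_l)\leqslant \#\mathscr{L}_B\cdot\lambda(B_M)$, which already gives $\lambda(B_M)\geqslant b/\#\mathscr{L}_B\geqslant b/(m+1)$; this is close but not quite the claimed $\tfrac{1-\varepsilon}{m}b$, so a sharper accounting is needed.

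The sharper argument: I would run the same five-block telescoping sum that produced \eqref{debut_m=1}, but now tailored to extract information about $\#\mathscr{L}_B$ and $\lambda(B_M)$ directly. Concretely, using that $0,1\in B$ forces $0\in\mathscr{L}_B$ and $m\in\mathscr{L}_B$, and that $A_i+B_j\subseteq S_{i+j}$ whenever both pieces are nonempty, one gets
$$\lambda(A+B)=\sum_l\lambda(S_l)\geqslant \lambda(A)+b+(\#\mathscr{L}_B-1)\lambda(A_{M'})$$
for a suitable large piece $A_{M'}$ of $A$, or symmetrically a bound involving $\lambda(B_M)$ and $\#\mathscr{L}_A$. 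Combining with $\lambda(A+B)=\lambda(A)+(K+\delta+\varepsilon)b$ gives a bound of the shape $(\#\mathscr{L}_B-1)\cdot(\text{large-piece size})\leqslant (K+\delta+\varepsilon-1)b$; since $\#\mathscr{L}_A\approx Km$ (there are about $K$ nonempty $A$-pieces per period and $m$ periods, because $I_1$ has measure just under $1-b$ and $\tilde A_K$ contributes $K$ layers), and since $\lambda(A_{M'})\gtrsim \mu(I_K)/m\geqslant (\delta-o(1))b/m$, one can close the loop. The cleanest route is probably to write $\sum_{l\in\mathscr{L}_B}\lambda(B_l)=b$ and separately bound $b-\lambda(B_M)=\sum_{l\neq M}\lambda(B_l)$ from above by $\varepsilon b/(\text{something})$, using that each "missing" $B_l$-mass would have to be compensated by extra sumset mass beyond the critical value $(K+\delta)b$, and the total such surplus is exactly $\varepsilon b$ by \eqref{eps=somme}.

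The step I expect to be the main obstacle is making precise the claim that a deficiency in $\lambda(B_M)$ relative to $b/m$ is \emph{charged} to the error budget $\varepsilon b$ rather than being absorbable for free. The subtlety is that when $m\geqslant 2$ the sumset $A+B$ genuinely splits into $L+m+1$ pieces $S_l$, and one must verify that the pieces $S_l$ for $l$ near the two ends ($l\leqslant i_A$ and $l\geqslant I_A+m$) together with the "interior" pieces really do sum to at least $\lambda(A)+b+(\#\mathscr{L}_B-1)\lambda(B_M)\cdot(\text{effective count})$ \emph{without double-counting} — the five-block decomposition in \eqref{debut_m=1} is exactly the device for this, so I would reuse it verbatim but swap the roles so that $\#\mathscr{L}_B$ and $\lambda(B_M)$ appear where $\#\mathcal{L}_A$ and $\lambda(B_M)$ currently appear. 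One then reads off $(\#\mathscr{L}_B-1)\lambda(B_M)\geqslant$ a quantity that, after using $\#\mathscr{L}_B\leqslant m+1$ and the lower bound on the $A$-piece sizes, forces $\lambda(B_M)\geqslant\frac{1-\varepsilon}{m}b$; the factor $1-\varepsilon$ (rather than $1$) is precisely the slack coming from $\varepsilon b$ in \eqref{eps=somme} and from $\mu(I_K)\geqslant(\delta-K(\log K-1)\varepsilon)b$ in Corollary \ref{mu(I_K)}, so the hypotheses \eqref{hyp_1}--\eqref{hyp_2} on $\varepsilon$ are what keep the estimate self-consistent.
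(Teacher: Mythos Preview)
Your proposal is heading in the wrong direction: the lemma has nothing to do with the sumset or the error budget \eqref{eps=somme}. The paper's proof is a three-line pigeonhole argument using only the containment $B\subset m^{-1}J$ with $\mu(J)\leqslant(1+\varepsilon)b$, and you already stated the key structural fact in your first paragraph without exploiting it.

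The point you noted but did not use is that $B_0$ and $B_m$ are the two halves $J_+/m$ and $(J_-+m-1)/m$ of a \emph{single} translate of $J/m$, so $\lambda(B_0)+\lambda(B_m)\leqslant\mu(J)/m\leqslant(1+\varepsilon)b/m$. Hence the $m-1$ interior pieces $B_1,\dots,B_{m-1}$ carry total mass at least $b-(1+\varepsilon)b/m$, and the maximum among them satisfies
\[
\lambda(B_M)\;\geqslant\;\frac{b-(1+\varepsilon)b/m}{m-1}\;=\;\frac{b}{m}\Bigl(1-\frac{\varepsilon}{m-1}\Bigr)\;\geqslant\;\frac{1-\varepsilon}{m}\,b.
\]
That is the entire argument. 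Your naive bound $\lambda(B_M)\geqslant b/(m+1)$ failed only because you counted $m+1$ slots; recognising that the two endpoint pieces share one slot brings the count down to $m$ and the refinement above does the rest. The elaborate five-block decomposition, the appeal to \eqref{debut_m=1}, and the discussion of $\#\mathscr{L}_A\approx Km$ are all unnecessary here (and, as written, your plan never actually isolates $\lambda(B_M)$ on one side of an inequality).
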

\begin{proof}
Rappelons qu'on a supposé $m\geqslant 2$. De plus, $B\subset m^{-1}J$ et $\lambda(J)\leqslant (1+\varepsilon)b$. D'où 
\begin{align*}
\lambda(B_M) & = \max\left\lbrace \mu(B_i) \ \vert \ i\in\mathscr{L}_B \right\rbrace \geqslant \max\left\lbrace \mu(B_i) \ \vert \ i\in\mathscr{L}_B\setminus \left\lbrace 0,m \right\rbrace\right\rbrace \\ & \geqslant \dfrac{b-\lambda(J)/m}{m-1} \geqslant \dfrac{b-\dfrac{1+\varepsilon}{m}b}{m-1} \geqslant \dfrac{b}{m}\left(1-\dfrac{\varepsilon}{m-1}\right) \geqslant \dfrac{1-\varepsilon}{m}b.
\end{align*}
\end{proof}
\begin{lemma}\label{min_LB}
Si $m\geqslant 2$ et $\mathscr{L}_B=\left\lbrace l\geqslant 0 \ \vert \ B_l\neq\varnothing \right\rbrace$, on a 
$$\#\mathscr{L}_B\geqslant 1+m\left(1-\dfrac{\varepsilon}{1+\varepsilon}\right).$$
\end{lemma}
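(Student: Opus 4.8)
The plan is to bound $\#\mathscr{L}_B$ from below by a mass-counting argument: the pieces $B_l$ live in pairwise disjoint ``cells'' of controlled length, so if there were few nonempty pieces then $B$ could not carry its total mass $b$. Concretely, recall that $B\subseteq m^{-1}J$ and that, lifted to $[0,1]$, the set $m^{-1}J$ is the disjoint union of the $m-1$ middle cells $\frac{l+(J_+\cup(J_--1))}{m}$ ($0<l<m$), each of length $\mu(J)/m$, together with the two end cells $J_+/m$ and $\frac{J_-+m-1}{m}$, of respective lengths $b_+/m$ and $b_-/m$. Hence $\lambda(B_l)\leqslant\mu(J)/m$ for every $l\in\llbracket 1,m-1\rrbracket$, while the two boundary pieces obey $\lambda(B_0)+\lambda(B_m)\leqslant(b_++b_-)/m=\mu(J)/m$.

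First I would record that both endpoints lie in $\mathscr{L}_B$: $\min B=0$ forces $B_0\neq\varnothing$, and since $D_B=1$ and $\min B=0$ we have $\max B=1\in B$ ($B$ being closed), so $B_m\neq\varnothing$ as well. Splitting the total mass accordingly,
$$b=\lambda(B_0)+\lambda(B_m)+\sum_{\substack{l\in\mathscr{L}_B\\ 0<l<m}}\lambda(B_l)\leqslant\frac{\mu(J)}{m}\Big(1+\#\big(\mathscr{L}_B\cap\llbracket 1,m-1\rrbracket\big)\Big),$$
and then invoking $\mu(J)<(1+\varepsilon)b$ from \eqref{maj_mu(J)} gives
$$\#\big(\mathscr{L}_B\cap\llbracket 1,m-1\rrbracket\big)\geqslant\frac{mb}{\mu(J)}-1>\frac{m}{1+\varepsilon}-1.$$
Adding back the two indices $0$ and $m$, which are distinct from $\llbracket 1,m-1\rrbracket$ and from each other since $m\geqslant 2$, yields $\#\mathscr{L}_B\geqslant 1+\frac{m}{1+\varepsilon}=1+m\big(1-\frac{\varepsilon}{1+\varepsilon}\big)$, as claimed.

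The argument is short, and the only genuinely delicate point is the bookkeeping for the two boundary pieces: $B_0$ and $B_m$ must be treated jointly as a single cell's worth of mass, otherwise bounding each of them by $\mu(J)/m$ separately loses the ``$+1$'' and only delivers $\#\mathscr{L}_B\geqslant m/(1+\varepsilon)$, which would be too weak when this estimate is later fed into \eqref{debut_m=1}. It is also worth checking that the cells are genuinely pairwise disjoint, which holds because $\mu(J)<1$ (indeed $\mu(J)\leqslant\frac12$ by \eqref{mu(J)<1/2}), so that $b=\sum_{l\in\mathscr{L}_B}\lambda(B_l)$ is an exact identity rather than merely an inequality.
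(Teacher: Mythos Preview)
Your proof is correct and follows essentially the same argument as the paper: both bound the mass of each interior piece $B_l$ by $\mu(J)/m\leqslant(1+\varepsilon)b/m$, treat $B_0$ and $B_m$ jointly as one cell's worth of mass, and then add back the two endpoint indices $0,m\in\mathscr{L}_B$ (guaranteed by $0,1\in B$) to recover the ``$+1$''. Your additional commentary on why the boundary pieces must be paired, and on disjointness of the cells, makes the bookkeeping slightly more explicit than in the paper but does not change the strategy.
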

\begin{proof}
On a $B\subseteq m^{-1}J$, $\mu(J)\leqslant b+\varepsilon b$ et $B_l=B\cap \dfrac{l+J}{m}$ ($0<l<m$), donc $\lambda(B_0)+\lambda(B_m)\leqslant\dfrac{1+\varepsilon}{m}b$ et $\lambda(B_l)\leqslant\dfrac{1+\varepsilon}{m}b$ pour tout $l\in\left\lbrace 1,...,m-1 \right\rbrace$ et ainsi
$$b = \sum\limits_{\substack{l\in\mathscr{L}_B}} \lambda(B_l) = \lambda(B_0)+\lambda(B_m)+\sum\limits_{\substack{l\in\mathscr{L}_B \\ l \neq 0,m}}^{}\lambda(B_l)  \leqslant \dfrac{1+\varepsilon}{m}b+\sum\limits_{\substack{l\in\mathscr{L}_B \\ l \neq 0,m}}\dfrac{1+\varepsilon}{m}b,$$
d'où $\#\left(\mathscr{L}_B\setminus \left\{0,m\right\}\right)+1\geqslant \dfrac{m}{1+\varepsilon}$. Comme $0,1\in B$, on a $0,m\in\mathscr{L}_B$ et donc finalement 
$$\#\mathscr{L}_B\geqslant \dfrac{m}{1+\varepsilon}+1 \geqslant1+m\left(1-\dfrac{\varepsilon}{1+\varepsilon}\right).$$
\end{proof}
Pour établir les deux minorations de $\#\mathcal{L}_A$ (suivant la taille de $m$), nous allons avoir besoin de quelques estimations. Commençons par quelques définitions. On pose
\begin{itemize}
\item $\tilde{A}^l_K=\tilde{A}_K\cap \dfrac{l+I_K'}{m}$ ($0\leqslant l<m$), où $ I_K'=\pi^{-1}(I_K)\cap\left[ 0,1\right] $;
\item $\mathscr{L}=\left\lbrace l\geqslant 0 \ \vert \ \tilde{A}^l_K\neq\varnothing \right\rbrace$ et $ N= \# \mathscr{L} $;
\item $L_f=\left\lbrace l\geqslant 0 \ \vert \ \mu\left(\tilde{A}^l_K\right)\geqslant \dfrac{\mu(I_K)}{m}-\dfrac{f\left(\varepsilon,K\right)}{m}b \right\rbrace$ et $n_f = \# L_f;$
\item $\Omega\left(\tilde{A}_K^l\right)=\left\lbrace i\in\mathscr{L}_A \ \vert \ \left( A_i\mod 1 \right)\cap \tilde{A}_K^l \neq\varnothing\right\rbrace, $ pour tout $l\in \mathscr{L}$;
\item $\sigma_1\left(\tilde{A}_K^l\right)=\#\Omega\left(\tilde{A}_K^l\right)$.
\end{itemize}
$$ $$
$$\begin{minipage}[l]{17cm}
\includegraphics[height=15cm]{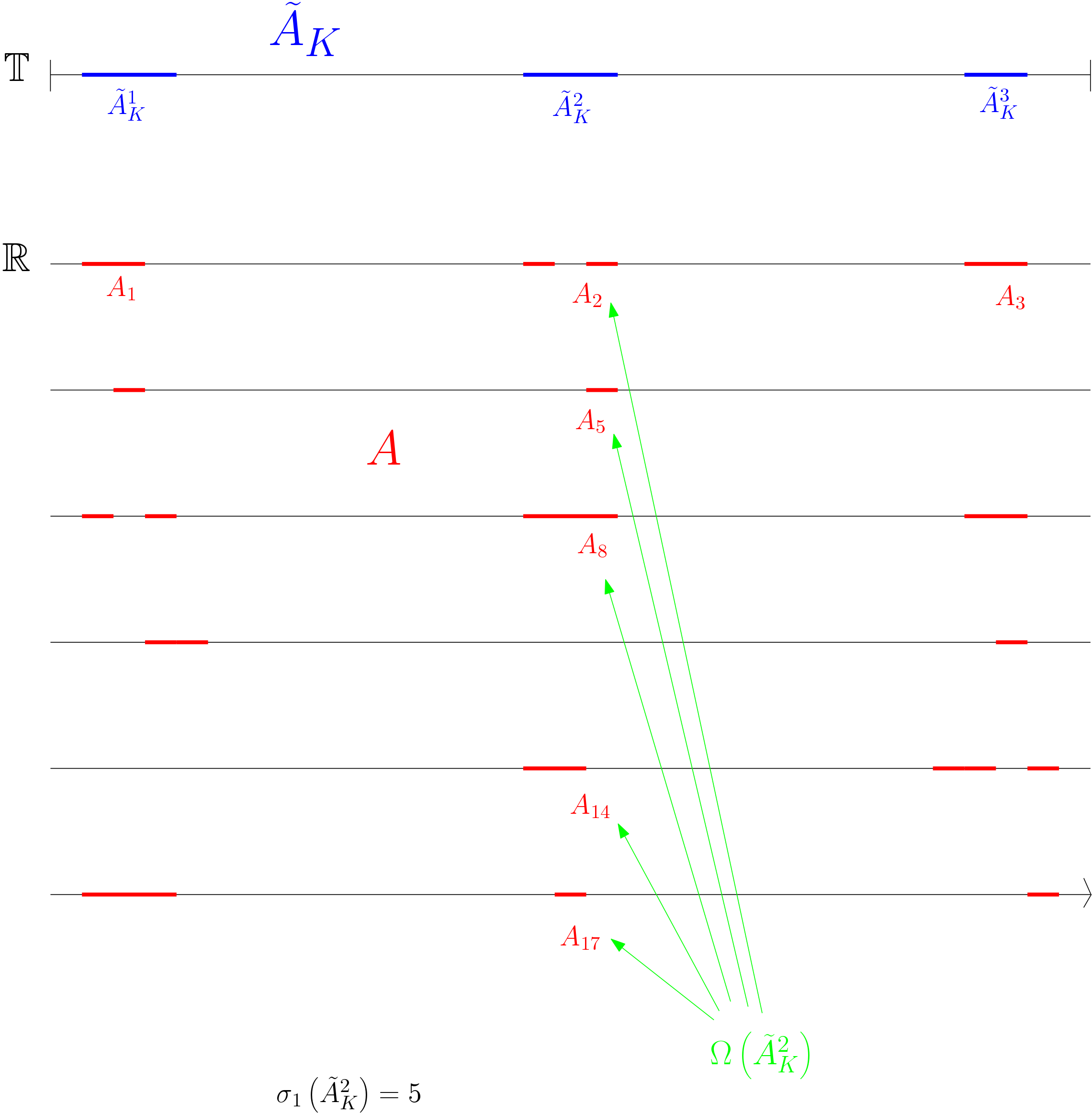}
\end{minipage}$$
$$ $$
Nous allons donner une minoration de $\sigma_1\left(\tilde{A}_K^l\right)$, de $N$ puis de $n_f$. Par définition de $\tilde{A}_K$, pour tout $l\in \mathscr{L}$, on a 
\begin{equation}\label{min_sigma1}
\sigma_1\left(\tilde{A}_K^l\right)\geqslant K.
\end{equation}
D'autre part
$$ \mu(I_K)-\varepsilon^2_Kb \leqslant \mu\left(\tilde{A}_K\right)\leqslant N\frac{\mu(I_K)}{m},$$
d'où
\begin{equation}\label{min_N}
N\geqslant m\left( 1-\frac{\varepsilon^2_K b}{\mu(I_K)}\right) .
\end{equation}
De plus
\begin{align*}
\mu(I_K)-\varepsilon^2_Kb & \leqslant \mu\left(\tilde{A}_K\right) = \sum\limits_{\substack{l=0}}^{m-1}\mu\left(\tilde{A}_K^l\right) \\ & \leqslant n_f\dfrac{\mu(I_K)}{m}+(m-n_f)\left(\dfrac{\mu(I_K)}{m}-\frac{f\left(\varepsilon,K\right)}{m}b\right) \\ & \leqslant n_f\frac{f\left(\varepsilon,K\right)}{m}b+\mu(I_K)-f\left(\varepsilon,K\right)b,
\end{align*}
d'où $n_f\geqslant m-\dfrac{\varepsilon^2_K b }{f\left(\varepsilon,K\right)b/m} ,$ et donc finalement
\begin{equation}\label{min_nf}
n_f\geqslant m\left(1-\dfrac{\varepsilon^2_K}{f\left(\varepsilon,K\right)}\right).
\end{equation}
Nous allons également avoir recourt à la majoration de $\#\mathscr{L}_A$ suivante.
\begin{equation}\label{LA<=Km}
\#\mathscr{L}_A\leqslant Km.
\end{equation}
En effet, on a
\begin{align*}
& \lambda(A)+\left(K+\delta+\varepsilon\right)b = \lambda(A+B) \\ & \geqslant \lambda(B_0)+...+\lambda(B_{M-1})+\#\mathscr{L}_A\times \lambda(B_{M})+ \lambda(A)+\lambda(B_{M+1})+...+\lambda(B_m) \\ & \geqslant \lambda(A)+b-\lambda(B_{M})+\#\mathscr{L}_A\times\lambda(B_{M}).
\end{align*}
Or $\lambda(B_{M})\geqslant\dfrac{1-\varepsilon}{m}b,$ donc
$$\#\mathscr{L}_A \leqslant \dfrac{m\left( K-1+\delta+\varepsilon\right)}{1-\varepsilon}+1  \leqslant Km+\dfrac{m\left( \varepsilon (K+1)+\delta-1\right)}{1-\varepsilon}+1 .$$
Mais $\varepsilon<\dfrac{1-\delta}{K+1}$ car par hypothèse $\varepsilon<\left(\dfrac{1-\delta}{K}\right)^2$ et $K\geqslant 2$ (cf. remarque \ref{K>1}). Ainsi $\varepsilon (K+1)+\delta-1<0$ et donc $\#\mathscr{L}_A< Km +1$. D'où finalement $\#\mathscr{L}_A\leqslant Km.$

Nous pouvons désormais établir les deux minorations de $\#\mathcal{L}_A$. Le lemme suivant va nous donner une bonne minoration de $\#\mathcal{L}_A$ quand $m$ est assez petit.
\begin{lemma}\label{min_LA_m<}
Si $2\leqslant m<\dfrac{\mu(I_K)}{\varepsilon^2_K b}$ et $\mathcal{L}_A=\left\lbrace l\geqslant 0 \ \vert \ \lambda(A_l)\geqslant\dfrac{\mu(I_{K})}{m}-\dfrac{f\left(\varepsilon,K\right)}{m}b \right\rbrace$, on a 
$$\#\mathcal{L}_A \geqslant Km\left(1-\dfrac{\varepsilon^2_K}{f\left(\varepsilon,K\right)}\right).$$
\end{lemma}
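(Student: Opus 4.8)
\emph{Plan.} The idea is that the hypothesis $m<\mu(I_K)/(\varepsilon^2_K b)$ is strong enough to pin down the whole ``piece'' combinatorics, after which $\#\mathcal L_A$ is read off directly. First I would note that this hypothesis forces $N=m$, i.e. that \emph{every} window $\tilde A_K^l$ ($0\le l\le m-1$) is nonempty. Indeed, \eqref{min_N} gives $N\ge m\bigl(1-\varepsilon^2_K b/\mu(I_K)\bigr)>m-1$ under the hypothesis, while $N\le m$ since $\tilde A_K\subseteq[0,1[$ is covered by the $m$ windows $(l+I_K')/m$; as $N\in\mathbb N$, $N=m$. In particular $\mathscr L=\{0,\dots,m-1\}$ and \eqref{min_sigma1} applies to each $l$: $\sigma_1(\tilde A_K^l)\ge K$ for all $l$.

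Next I would establish that the sets $\Omega(\tilde A_K^l)$, $l=0,\dots,m-1$, are pairwise disjoint. The point is that each $A_i\subseteq(i+I_1')/m$ lies, modulo $1$, inside a single arc of $\mathbb T$ of length $\mu(I_1)/m$; since $\mu(I_1)<1-b$ (the bound obtained just before \eqref{I_k-loin-de-0}) and $\mu(I_K)\le(\delta+2\varepsilon)b<b$ by Corollaire~\ref{mu(I_K)} and \eqref{hyp_2}, this length is $<(1-\mu(I_K))/m$, which is precisely the cyclic gap separating the $m$ disjoint windows $(l+I_K')/m$. Hence $A_i\bmod 1$ cannot meet two distinct windows, so it cannot meet both $\tilde A_K^l$ and $\tilde A_K^{l'}$ with $l\ne l'$, i.e. $i$ cannot lie in $\Omega(\tilde A_K^l)\cap\Omega(\tilde A_K^{l'})$. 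Combining this disjointness with $\sigma_1(\tilde A_K^l)\ge K$ and the upper bound $\#\mathscr L_A\le Km$ of \eqref{LA<=Km} yields
$$Km\le\sum_{l=0}^{m-1}\sigma_1(\tilde A_K^l)=\#\Bigl(\bigsqcup_{l=0}^{m-1}\Omega(\tilde A_K^l)\Bigr)\le\#\mathscr L_A\le Km,$$
so equality holds throughout: $\sigma_1(\tilde A_K^l)=K$ for every $l$, and $\mathscr L_A=\bigsqcup_{l=0}^{m-1}\Omega(\tilde A_K^l)$.

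Then I would fix $l\in L_f$ and show $\Omega(\tilde A_K^l)\subseteq\mathcal L_A$. Because $\tilde A_{K+1}=\varnothing$, every $y\in\tilde A_K^l$ has \emph{exactly} $K$ lifts in $A$, and these lie in $K$ \emph{distinct} pieces $A_i$ (an interval $(i+I_1')/m$ has length $\le 1/m<1$, so it contains at most one point of $y+\mathbb Z$); all these indices belong to $\Omega(\tilde A_K^l)$, which has exactly $K$ elements, so the set of pieces hit by the lifts of $y$ is \emph{all} of $\Omega(\tilde A_K^l)$. As this holds for every $y\in\tilde A_K^l$, we get $\tilde A_K^l\subseteq(A_i\bmod 1)$ for every $i\in\Omega(\tilde A_K^l)$; since reduction mod $1$ is injective on $A_i$, $\lambda(A_i)=\mu(A_i\bmod 1)\ge\mu(\tilde A_K^l)\ge\mu(I_K)/m-f(\varepsilon,K)b/m$, i.e. $i\in\mathcal L_A$. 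Summing over $l\in L_f$, using the disjointness of the $\Omega(\tilde A_K^l)$ and \eqref{min_nf},
$$\#\mathcal L_A\ \ge\ \sum_{l\in L_f}\#\Omega(\tilde A_K^l)\ =\ K\,n_f\ \ge\ Km\Bigl(1-\frac{\varepsilon^2_K}{f(\varepsilon,K)}\Bigr),$$
which is the assertion.

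The step I expect to be the main obstacle is the pairwise disjointness of the $\Omega(\tilde A_K^l)$: it is where one must argue geometrically that an arc carrying $A_i\bmod 1$ is too short to reach two of the windows $(l+I_K')/m$, and it is the only place where the a priori bounds $\mu(I_1)<1-b$ and $\mu(I_K)<b$ (hence the smallness of $\varepsilon$) are really used. Everything else is either a direct application of the already-proved estimates \eqref{min_N}, \eqref{min_sigma1}, \eqref{LA<=Km}, \eqref{min_nf}, or the elementary ``exactly $K$ lifts occupying exactly $K$ pieces'' bookkeeping.
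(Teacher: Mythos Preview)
Your proof is correct and follows the same route as the paper's: show $N=m$ from \eqref{min_N}, combine $\sum_l\sigma_1(\tilde A_K^l)\ge Km$ with \eqref{LA<=Km} to force $\sigma_1(\tilde A_K^l)=K$ for every $l$, deduce $\lambda(A_i)\ge\mu(\tilde A_K^l)$ for all $i\in\Omega(\tilde A_K^l)$, and sum over $l\in L_f$ using \eqref{min_nf}. The paper leaves the disjointness of the $\Omega(\tilde A_K^l)$ implicit, and your explicit justification is welcome; note however that it is simpler than your arc-length comparison: since $I_K'\subseteq I_1'$ (by \eqref{I_k_dans_I_k-1}) and the intervals $\frac{l+I_1'}{m}$, $0\le l<m$, are pairwise disjoint in $[0,1)$, the inclusion $A_i\bmod 1\subseteq\frac{(i\bmod m)+I_1'}{m}$ already forces $l=i\bmod m$ whenever $i\in\Omega(\tilde A_K^l)$.
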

\begin{proof}
$\tilde{A}_K\subseteq m^{-1}I_K$ et $\mu(I_K)\leqslant \mu(\tilde{A}_K)+\varepsilon_K^2 b$. Par \eqref{min_N}, on a $ N \geqslant m\left( 1-\frac{\varepsilon^2_K b}{\mu(I_K)}\right),$ et donc $N>m-1$ car  $m<\dfrac{\mu(I_K)}{\varepsilon^2_K b}$ par hypothèse du lemme. De plus, comme $N$ est un entier inférieur ou égal à $m$ par définition, on a
\begin{equation}\label{N=m}
N=m.
\end{equation}
Ainsi les $m$ morceaux de $\tilde{A}_K$ sont non vides et ils résultent chacun d'une contribution d'au moins $K$ morceaux de $A$ (par définition de $\tilde{A}_K$) et donc $\#\mathscr{L}_A$ va contenir au moins $Km$ éléments. En effet, par \eqref{min_sigma1}, on a
$$ \#\mathscr{L}_A \geqslant \sum\limits_{\substack{l\in \mathscr{L}}}\sigma_1\left(\tilde{A}_K^l\right) \geqslant N K,$$
d'où, d'après \eqref{N=m}
\begin{equation}\label{L_A>=Km}
\#\mathscr{L}_A\geqslant Km.
\end{equation}
Ainsi, d'après \eqref{L_A>=Km} et \eqref{LA<=Km}, on a
\begin{equation}\label{LA=Km}
\#\mathscr{L}_A= Km.
\end{equation}
De cette manière, en reprenant nos calculs, on a $Km=\#\mathscr{L}_A=\sum\limits_{\substack{l\in \mathscr{L}}}\sigma_1\left(\tilde{A}_K^l\right),$ et donc nécessairement, comme $N=m$ d'après \eqref{N=m}, pour tout $l\in\mathscr{L}$ on a $ \sigma_1\left(\tilde{A}_K^l\right)=K$, ce qui implique finalement, pour tout $i\in\Omega\left(\tilde{A}_K^l\right)$
\begin{equation}
\lambda(A_i)\geqslant\mu\left( \tilde{A}_K^l\right).
\end{equation}
En particulier, pour tout $l\in L_f$, on a pour tout $i\in\Omega\left(\tilde{A}_K^l\right)$
$$\lambda(A_i)\geqslant \dfrac{\mu(I_{K})}{m}-\dfrac{f\left(\varepsilon,K\right)}{m}b,$$
et donc $i\in\mathcal{L}_A$. De cette manière $ \#\mathcal{L}_A \geqslant \sum\limits_{\substack{l\in L_f}}\sigma_1\left(\tilde{A}_K^l\right)$, d'où
\begin{equation}\label{L_A>Kn_f}
\#\mathcal{L}_A\geqslant Kn_f .
\end{equation} 
Finalement, avec \eqref{min_nf}, on obtient la minoration $\#\mathcal{L}_A\geqslant Km\left(1-\dfrac{\varepsilon^2_K}{f\left(\varepsilon,K\right)}\right).$
\end{proof}
Nous pourrons utiliser cette minoration de $\#\mathcal{L}_A$ si $m$ est suffisamment petit et il nous restera donc à traiter le cas où $m$ est grand. Pour cela, nous utiliserons une autre minoration de $\#\mathcal{L}_A$ qui fait l'objet du lemme suivant.
\begin{lemma}\label{min_LA_m>}
Si $ m\geqslant\dfrac{\mu(I_K)}{\varepsilon^2_K b}$ et $\mathcal{L}_A=\left\lbrace l\geqslant 0 \ \vert \ \lambda(A_l)\geqslant\dfrac{\mu(I_{K})}{m}-\dfrac{f\left(\varepsilon,K\right)}{m}b \right\rbrace$, on a 
$$\#\mathcal{L}_A \geqslant Km\left(1-\dfrac{g\left(\varepsilon,K\right)}{f\left(\varepsilon,K\right)}-K\varepsilon\left( \dfrac{1}{g\left(\varepsilon,K\right)}-\frac{b}{\mu(I_K)}\right)\right),$$
où $g$ est telle que $0<g(\varepsilon,K)<f(\varepsilon,K)$ et est à optimiser.
\end{lemma}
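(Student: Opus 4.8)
L'idée est de reprendre la preuve du lemme~\ref{min_LA_m<}, la différence étant que pour $m$ grand on ne peut plus imposer $N=m$, donc plus forcer $\sigma_1(\tilde A_K^l)=K$ pour tout $l$~; pour absorber cette perte on travaille avec un seuil un peu plus strict que celui qui découpe $\mathcal{L}_A$. Je poserais
$$L_g=\Bigl\{l\geqslant 0 : \mu(\tilde A_K^l)\geqslant\frac{\mu(I_K)-g(\varepsilon,K)b}{m}\Bigr\},\qquad n_g=\# L_g .$$
En reprenant mot pour mot le calcul ayant conduit à \eqref{min_nf}, avec $g$ à la place de $f$, on obtient $n_g\geqslant m\bigl(1-\varepsilon^2_K/g\bigr)$ (ceci suppose $gb<\mu(I_K)$, ce qui résulte de $g<f$ et de la contrainte rappelée dans la remarque~\ref{pasdepbpourlechoixdef} via le corollaire~\ref{mu(I_K)}).

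\emph{Estimation locale.} Fixons $l\in L_g$~; pour $i\in\Omega(\tilde A_K^l)$, notons $a_i$ le translaté de $A_i$ ramené dans la tranche $\tfrac{l+I_1'}{m}\subseteq[0,1]$. Par définition de $\tilde A_K$, chaque $x\in\tilde A_K^l$ possède au moins $K$ translatés entiers dans $A$, répartis dans au moins $K$ pièces $A_i$ distinctes avec $i\equiv l\pmod{m}$, dont les translatés $a_i$ contiennent $x$~; autrement dit les $a_i$, $i\in\Omega(\tilde A_K^l)$, recouvrent $\tilde A_K^l$ avec multiplicité au moins $K$, d'où
$$\sum_{i\in\Omega(\tilde A_K^l)}\mu(a_i\cap\tilde A_K^l)\ \geqslant\ K\,\mu(\tilde A_K^l)\ \geqslant\ K\cdot\frac{\mu(I_K)-gb}{m} .$$
Écrivons $\sigma_1(\tilde A_K^l)=K+t_l$, $t_l\geqslant 0$ (par \eqref{min_sigma1}), et posons $q_l=\#\bigl(\Omega(\tilde A_K^l)\setminus\mathcal{L}_A\bigr)$, de sorte que $p_l:=\#\bigl(\Omega(\tilde A_K^l)\cap\mathcal{L}_A\bigr)=K+t_l-q_l$. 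Pour $i\in\mathcal{L}_A\cap\Omega(\tilde A_K^l)$ on majore $\mu(a_i\cap\tilde A_K^l)\leqslant\mu(\tilde A_K^l)\leqslant\tfrac{\mu(I_K)}{m}$ (car $\tilde A_K^l\subseteq\tfrac{l+I_K'}{m}$), et pour $i\notin\mathcal{L}_A$ on a $\mu(a_i\cap\tilde A_K^l)\leqslant\lambda(A_i)<\tfrac{\mu(I_K)-fb}{m}$. En injectant cela dans l'inégalité précédente, en multipliant par $m$ et en utilisant $p_l+q_l=K+t_l$, on obtient $K(\mu(I_K)-gb)\leqslant (K+t_l)\mu(I_K)-q_lfb$, soit
$$q_l\leqslant\frac{Kg}{f}+\frac{t_l\,\mu(I_K)}{fb},\qquad\text{et donc}\qquad p_l\geqslant K-\frac{Kg}{f}-t_l\cdot\frac{\mu(I_K)-fb}{fb} .$$

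\emph{Globalisation.} Les ensembles $\Omega(\tilde A_K^l)$, $l=0,\dots,m-1$, sont deux à deux disjoints (leurs éléments ont des résidus distincts modulo $m$) et inclus dans $\mathscr{L}_A$~; en sommant $\sigma_1(\tilde A_K^l)=K+t_l$ sur $l\in\mathscr{L}$ et en utilisant \eqref{LA<=Km} on a $NK+\sum_{l\in\mathscr{L}}t_l\leqslant\#\mathscr{L}_A\leqslant Km$, d'où $\sum_{l\in\mathscr{L}}t_l\leqslant K(m-N)$~; par \eqref{min_N}, $m-N\leqslant m\varepsilon^2_Kb/\mu(I_K)$, donc $\sum_{l\in\mathscr{L}}t_l\leqslant Km\varepsilon^2_Kb/\mu(I_K)$. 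Je sommerais alors la minoration locale sur $l\in L_g$, en gardant $Kn_g(1-g/f)$ sous forme de produit~:
$$\#\mathcal{L}_A\ \geqslant\ \sum_{l\in L_g}p_l\ \geqslant\ Kn_g\Bigl(1-\frac gf\Bigr)-\frac{\mu(I_K)-fb}{fb}\sum_{l\in L_g}t_l\ \geqslant\ Km\Bigl(1-\frac{\varepsilon^2_K}{g}\Bigr)\Bigl(1-\frac gf\Bigr)-\frac{\mu(I_K)-fb}{fb}\cdot\frac{Km\varepsilon^2_Kb}{\mu(I_K)} .$$
En développant $\bigl(1-\tfrac{\varepsilon^2_K}{g}\bigr)\bigl(1-\tfrac gf\bigr)=1-\tfrac gf-\tfrac{\varepsilon^2_K}{g}+\tfrac{\varepsilon^2_K}{f}$ et $\tfrac{\mu(I_K)-fb}{fb}\cdot\tfrac{Km\varepsilon^2_Kb}{\mu(I_K)}=\tfrac{Km\varepsilon^2_K}{f}-\tfrac{Km\varepsilon^2_Kb}{\mu(I_K)}$, les deux contributions en $\varepsilon^2_K/f$ se compensent et il reste
$$\#\mathcal{L}_A\ \geqslant\ Km\Bigl(1-\frac gf-\varepsilon^2_K\Bigl(\frac1g-\frac{b}{\mu(I_K)}\Bigr)\Bigr) .$$
Comme $g<f\leqslant\mu(I_K)/b$ (corollaire~\ref{mu(I_K)} et hypothèse~\eqref{hyp_1}), la quantité $\tfrac1g-\tfrac{b}{\mu(I_K)}$ est strictement positive, si bien que \eqref{maj_eps_2} ($\varepsilon^2_K\leqslant K\varepsilon$) permet de remplacer $\varepsilon^2_K$ par $K\varepsilon$ et fournit exactement l'inégalité annoncée.

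L'étape délicate est la comptabilité ci-dessus. Contrairement au lemme~\ref{min_LA_m<}, où $N=m$ forçait chaque $\Omega(\tilde A_K^l)$ à avoir exactement $K$ éléments, ici chacun en porte $t_l\geqslant 0$ de plus qui diluent le recouvrement, et il ne faut surtout pas abandonner le terme $+t_l$ en minorant $p_l$~: c'est précisément son interaction (via le $-fb$ de $\mu(I_K)-fb$) avec la majoration $m-N\leqslant m\varepsilon^2_Kb/\mu(I_K)$ qui, à la fois, annule les pertes en $\varepsilon^2_K/f$ et produit la correction favorable $+\,\varepsilon^2_Kb/\mu(I_K)$. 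Le paramètre libre $g<f$ sert à ménager l'écart $\tfrac{gb}{m}$ entre le seuil de $L_g$ et celui de $\mathcal{L}_A$, au prix du terme $-g/f$, à équilibrer ensuite contre le gain $n_g\approx m$.
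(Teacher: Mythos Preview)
Your proof is correct and follows essentially the same approach as the paper's. The only cosmetic difference is that you work directly with $p_l=\#\bigl(\Omega(\tilde A_K^l)\cap\mathcal{L}_A\bigr)$, whereas the paper introduces the auxiliary count $\sigma_2(\tilde A_K^l)$ based on the slightly stronger local condition $\lambda(A_i\cap\tilde A_K^l)\geqslant(\mu(I_K)-fb)/m$; the covering inequality, the global bound $\sum t_l\leqslant Km\varepsilon^2_Kb/\mu(I_K)$ via \eqref{LA<=Km} and \eqref{min_N}, and the final expansion with the cancellation of the $\varepsilon^2_K/f$ terms are identical.
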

\begin{proof}
On a $\tilde{A}_K\subseteq m^{-1}I_K$ et $\mu(I_K)\leqslant \mu(\tilde{A}_K)+\varepsilon_K^2 b$. Nous allons avoir besoin de définir des objets similaires à ceux apparaissant dans la preuve du lemme précédent. On pose $\tilde{A}^l_K=\tilde{A}_K\cap \dfrac{l+I_K'}{m}$ ($0\leqslant l<m$), où $ I_K'=\pi^{-1}(I_K)\cap\left[ 0,1\right] $. Soit $g(\varepsilon,K)$ une fonction positive telle que $g(\varepsilon,K)<f(\varepsilon,K)$ que nous déterminerons plus tard. Rappelons que $n_g=\#L_g$ et que par \eqref{min_sigma1}, pour tout $l\in L_g$, on a $\sigma_1\left(\tilde{A}_K^l\right)\geqslant K.$ Ainsi pour tout $l\in\ L_g$, on définit $t_l=\sigma_1\left(\tilde{A}_K^l\right)- K.$ Par \eqref{LA<=Km}, on a
\begin{align*}
Km & \geqslant \#\mathscr{L}_A  \geqslant \sum\limits_{\substack{l\in L_g}}\sigma_1\left(\tilde{A}_K^l\right)+ K\left( N-n_g\right) \\ & \geqslant \sum\limits_{\substack{l\in L_g}} \left( K+t_l\right) + K\left( N-n_g\right) \geqslant Kn_g+\sum\limits_{\substack{l\in L_g}} t_l + K\left( N-n_g\right) \geqslant KN+\sum\limits_{\substack{l\in L_g}} t_l .
\end{align*}
Ainsi $\sum\limits_{\substack{l\in L_g}} t_l\leqslant K\left( m-N\right)$ et, en utilisant \eqref{min_N}, on obtient 
\begin{equation}\label{somme_tl}
\sum\limits_{\substack{l\in L_g}} t_l\leqslant Km\frac{\varepsilon^2_K b}{\mu(I_K)}.
\end{equation}
Enfin on pose pour tout $l\in L_g$
$$\sigma_2\left(\tilde{A}_K^l\right)=\#\left\lbrace A_i \ \vert \ \left( A_i\mod 1\right) \cap \tilde{A}_K^l\neq\varnothing \ \text{ et } \ \lambda\left( A_i\cap \tilde{A}_K^l\right)\geqslant\dfrac{\mu(I_{K})}{m}-\dfrac{f\left(\varepsilon,K\right)}{m}b  \right\rbrace.$$
Ainsi pour tout $l\in L_g$ on a
\begin{align*}
& K\left(\dfrac{\mu(I_K)}{m}-\dfrac{g\left(\varepsilon,K\right)}{m}b\right) \leqslant \sum\limits_{\substack{\ell\in\Omega\left(\tilde{A}_K^l\right)}}\lambda\left( A_\ell\cap \tilde{A}_K^l\right) \\ & \leqslant \left(\sigma_1\left(\tilde{A}_K^l\right)-\sigma_2\left(\tilde{A}_K^l\right)\right)\left(\dfrac{\mu(I_K)}{m}-\dfrac{f\left(\varepsilon,K\right)}{m}b\right)+\sigma_2\left(\tilde{A}_K^l\right)\dfrac{\mu(I_K)}{m} \\ & \leqslant \left(K+t_l-\sigma_2\left(\tilde{A}_K^l\right)\right)\left(\dfrac{\mu(I_K)}{m}-\dfrac{f\left(\varepsilon,K\right)}{m}b\right) +\sigma_2\left(\tilde{A}_K^l\right)\dfrac{\mu(I_K)}{m},
\end{align*}
d'où
$$\sigma_2\left(\tilde{A}_K^l\right)\geqslant K\dfrac{f\left(\varepsilon,K\right)-g\left(\varepsilon,K\right)}{f\left(\varepsilon,K\right)}-\dfrac{t_l\big(\mu(I_K)-f\left(\varepsilon,K\right)b\big)}{f\left(\varepsilon,K\right)b}.$$
Et finalement 
\begin{align*}
\#\mathcal{L}_A & \geqslant \sum\limits_{\substack{l\in L_g}}\sigma_2\left(\tilde{A}_K^l\right) \geqslant \sum\limits_{\substack{l\in L_g}} \left( K\dfrac{f\left(\varepsilon,K\right)-g\left(\varepsilon,K\right)}{f\left(\varepsilon,K\right)}-\dfrac{t_l\big(\mu(I_K)-f\left(\varepsilon,K\right)b\big)}{f\left(\varepsilon,K\right)b}\right) \\ & \geqslant m\left(1-\dfrac{\varepsilon^2_K}{g\left(\varepsilon,K\right)}\right)K\dfrac{f\left(\varepsilon,K\right)-g\left(\varepsilon,K\right)}{f\left(\varepsilon,K\right)}-\dfrac{\mu(I_K)-f\left(\varepsilon,K\right)b}{f\left(\varepsilon,K\right)b}\sum\limits_{\substack{l\in L_g}}t_l \\ & \geqslant m\left(1-\dfrac{\varepsilon^2_K}{g\left(\varepsilon,K\right)}\right)K\dfrac{f\left(\varepsilon,K\right)-g\left(\varepsilon,K\right)}{f\left(\varepsilon,K\right)}-\dfrac{\mu(I_K)-f\left(\varepsilon,K\right)b}{f\left(\varepsilon,K\right)b}Km\dfrac{\varepsilon^2_K b}{\mu(I_K)} \\ & \geqslant m\left(1-\dfrac{\varepsilon^2_K}{g\left(\varepsilon,K\right)}\right)K\left(1-\dfrac{g\left(\varepsilon,K\right)}{f\left(\varepsilon,K\right)}\right)-\left(\dfrac{1}{f\left(\varepsilon,K\right)}-\frac{b}{\mu(I_K)}\right)Km\varepsilon^2_K ,
\end{align*}
où la troisième inégalité utilise \eqref{min_nf} et la quatrième \eqref{somme_tl}. Ainsi 
\begin{align*}
\#\mathcal{L}_A & \geqslant Km\left(1-\dfrac{g\left(\varepsilon,K\right)}{f\left(\varepsilon,K\right)}-\varepsilon^2_K\left( \dfrac{1}{g\left(\varepsilon,K\right)}-\dfrac{1}{f\left(\varepsilon,K\right)}+\dfrac{1}{f\left(\varepsilon,K\right)}-\frac{b}{\mu(I_K)}\right)\right) \\ & \geqslant Km\left(1-\dfrac{g\left(\varepsilon,K\right)}{f\left(\varepsilon,K\right)}-\varepsilon^2_K\left( \dfrac{1}{g\left(\varepsilon,K\right)}-\frac{b}{\mu(I_K)}\right)\right) ,
\end{align*} 
Enfin on conclut en utilisant \eqref{maj_eps_2}
$$\#\mathcal{L}_A \geqslant Km\left(1-\dfrac{g\left(\varepsilon,K\right)}{f\left(\varepsilon,K\right)}-K\varepsilon\left( \dfrac{1}{g\left(\varepsilon,K\right)}-\frac{b}{\mu(I_K)}\right)\right).$$
\end{proof}
Cette minoration de $\#\mathcal{L}_A$ est moins bonne que la précédente mais nous ne l'utiliserons que lorsque $m$ est supérieur à $\dfrac{\mu(I_K)}{b\varepsilon^2_K}$ et nous pourrons donc utiliser l'inégalité $m\geqslant \dfrac{\mu(I_K)}{b\varepsilon^2_K}$ pour pallier cette perte. 

Toutes nos minorations sont établies, nous sommes désormais en mesure de conclure. Comme nous l'avons expliqué dans la partie précédente, nous allons distinguer deux cas suivant la taille de $m$. 

\subsubsection{Preuve de $m=1$ lorsque $m<\dfrac{\mu(I_K)}{\varepsilon^2_K b}$}

Nous allons raisonner par l'absurde et on suppose donc dans cette partie que $2\leqslant m<\dfrac{\mu(I_K)}{\varepsilon^2_K b}$ et nous pourrons donc utiliser le lemme \ref{min_LA_m<}. Rappelons l'inégalité \eqref{debut_m=1}
$$\lambda(A+B)\geqslant \lambda(A)+b + \left(\#\mathcal{L}_A-1\right)\lambda(B_M)+\left(\#\mathscr{L}_B-1\right)\left(\dfrac{\mu(I_{K})}{m}-\dfrac{f\left(\varepsilon,K\right)}{m}b\right),$$
et donc avec les lemmes \ref{min_BM}, \ref{min_LB}, \ref{min_LA_m<}, et l'hypothèse $\lambda(A+B)=\lambda(A)+(K+\delta+\varepsilon)b$, on obtient
\begin{align*}
(K+\delta+\varepsilon)b & \geqslant b + Km\left(1-\dfrac{\varepsilon^2_K}{f\left(\varepsilon,K\right)}\right)\dfrac{1-\varepsilon}{m}b-\dfrac{1-\varepsilon}{m}b \\ & \ \ \ +m\left(1-\dfrac{\varepsilon}{1+\varepsilon}\right)\left(\dfrac{\mu(I_{K})}{m}-\dfrac{f\left(\varepsilon,K\right)}{m}b\right).
\end{align*}
Ainsi en utilisant le corollaire \ref{mu(I_K)}, on a 
\begin{align*}
(K+\delta+\varepsilon) & \geqslant 1 + K\left( 1-\dfrac{\varepsilon^2_K}{f\left(\varepsilon,K\right)}-\varepsilon\left( 1-\dfrac{\varepsilon^2_K}{f\left(\varepsilon,K\right)}\right)\right)-\dfrac{1-\varepsilon}{m}  \\ & \ \ \ +m\left(1-\dfrac{\varepsilon}{1+\varepsilon}\right)\left(\dfrac{\delta-K\left( \log(K)-1\right)\varepsilon}{m}-\dfrac{f\left(\varepsilon,K\right)}{m}\right)\\ & \geqslant 1 + K\left( 1-\dfrac{\varepsilon^2_K}{f\left(\varepsilon,K\right)}-\varepsilon\left( 1-\dfrac{\varepsilon^2_K}{f\left(\varepsilon,K\right)}\right)\right)-\dfrac{1-\varepsilon}{m}  \\ & \ \ \ +\left(1-\dfrac{\varepsilon}{1+\varepsilon}\right)\left(\delta-K\left( \log(K)-1\right)\varepsilon-f\left(\varepsilon,K\right)\right).
\end{align*}
Finalement, on a donc
\begin{align*}
0 & \geqslant  1 - K\left( \dfrac{\varepsilon^2_K}{f\left(\varepsilon,K\right)}+\varepsilon\left( 1-\dfrac{\varepsilon^2_K}{f\left(\varepsilon,K\right)}\right)\right)-\dfrac{1-\varepsilon}{m}  \\ & \ \ \ -\left(1-\dfrac{\varepsilon}{1+\varepsilon}\right)\left(K\left( \log(K)-1\right)\varepsilon+f\left(\varepsilon,K\right)\right)-\delta\dfrac{\varepsilon}{1+\varepsilon}-\varepsilon \\ & \geqslant  1 - K\left( \dfrac{\varepsilon^2_K}{f\left(\varepsilon,K\right)}+\varepsilon\right)-\dfrac{1-\varepsilon}{m} -\big(K\left( \log(K)-1\right)\varepsilon+f\left(\varepsilon,K\right)\big)-\delta\varepsilon-\varepsilon.
\end{align*}
Ainsi par \eqref{maj_eps_2}
$$K\left( \dfrac{K\varepsilon}{f\left(\varepsilon,K\right)}+\varepsilon\right)+K\left( \log(K)-1\right)\varepsilon+f\left(\varepsilon,K\right)+\delta\varepsilon+\varepsilon \geqslant  1-\frac{1}{m},$$
et comme on a supposé $m\geqslant2$
$$K\left( \dfrac{K\varepsilon}{f\left(\varepsilon,K\right)}+\varepsilon\right)+K\left( \log(K)-1\right)\varepsilon+f\left(\varepsilon,K\right)+\delta\varepsilon+\varepsilon \geqslant \frac{1}{2}.$$
Pour aboutir à une absurdité, on voudrait que le terme de gauche soit le plus petit possible, ce qui nous conduit à choisir $f\left(\varepsilon,K\right)=\sqrt{\varepsilon}K$ (ce qui ne pose pas de souci, cf. remarque \ref{pasdepbpourlechoixdef}). On obtient alors
\begin{equation}\label{contradiction_m=1_m<}
\varepsilon\left(K\log K+\delta+1\right)+2K\sqrt{\varepsilon} \geqslant  \frac{1}{2}.
\end{equation}
Or d'autre part, $\varepsilon<\left(\dfrac{\delta}{3K}\right)^3$ par l'hypothèse \eqref{hyp_1}, donc
\begin{align*}
\varepsilon\left(K\log K+\delta+1\right)+2K\sqrt{\varepsilon} & < \left(\dfrac{\delta}{3K}\right)^3\left(K\log K+\delta+1\right)+2K\left(\dfrac{\delta}{3K}\right)^{3/2} \\ & < \dfrac{K\log K+2}{27K^3}+\frac{2}{3\sqrt{3K}} <\frac{1}{2} \ \ \ \text{quel que soit $K\geqslant2$},
\end{align*}
ce qui contredit \eqref{contradiction_m=1_m<} et nous donne donc l'absurdité. On ne peut donc pas avoir $m\geqslant 2$ et donc nécessairement $m=1$.

\subsubsection{Le cas $m\geqslant\dfrac{\mu(I_K)}{\varepsilon^2_K b}$ est impossible}

On suppose donc ici que $m$ est supérieur ou égal à $\dfrac{\mu(I_K)}{\varepsilon^2_K b}$ et on pourra donc utiliser le lemme \ref{min_LA_m>}. Rappelons l'inégalité \eqref{debut_m=1} 
$$\lambda(A+B)\geqslant \lambda(A)+b + \left(\#\mathcal{L}_A-1\right)\lambda(B_M)+\left(\#\mathscr{L}_B-1\right)\left(\dfrac{\mu(I_{K})}{m}-\dfrac{f\left(\varepsilon,K\right)}{m}b\right),$$
donc avec les lemmes \ref{min_BM}, \ref{min_LB}, \ref{min_LA_m>}, et l'hypothèse $\lambda(A+B)=\lambda(A)+(K+\delta+\varepsilon)b$, on obtient
\begin{align*}
(K+\delta+\varepsilon)b & \geqslant b + Km\left(1-\dfrac{g\left(\varepsilon,K\right)}{f\left(\varepsilon,K\right)}-K\varepsilon\left( \dfrac{1}{g\left(\varepsilon,K\right)}-\frac{b}{\mu(I_K)}\right)\right)\dfrac{1-\varepsilon}{m}b-\dfrac{1-\varepsilon}{m}b \\ & \ \ \ +m\left(1-\dfrac{\varepsilon}{1+\varepsilon}\right)\left(\dfrac{\mu(I_{K})}{m}-\dfrac{f\left(\varepsilon,K\right)}{m}b\right).
\end{align*}
Ainsi en utilisant le corollaire \ref{mu(I_K)}, on a 
\begin{align*}
(K+\delta+\varepsilon) & \geqslant 1 + K(1-\varepsilon)\left( 1-\dfrac{g\left(\varepsilon,K\right)}{f\left(\varepsilon,K\right)}-\dfrac{K\varepsilon}{g\left(\varepsilon,K\right)}+\dfrac{K\varepsilon b}{\mu(I_{K})}\right)-\dfrac{1-\varepsilon}{m}  \\ & \ \ \ +m\left(1-\dfrac{\varepsilon}{1+\varepsilon}\right)\left(\dfrac{\delta-K\left( \log(K)-1\right)\varepsilon}{m}-\dfrac{f\left(\varepsilon,K\right)}{m}\right)\\ & \geqslant 1 + K(1-\varepsilon)\left( 1-\dfrac{g\left(\varepsilon,K\right)}{f\left(\varepsilon,K\right)}-\dfrac{K\varepsilon}{g\left(\varepsilon,K\right)}+\dfrac{K\varepsilon}{\delta+2\varepsilon}\right)-\dfrac{1-\varepsilon}{m}  \\ & \ \ \ +\left(1-\dfrac{\varepsilon}{1+\varepsilon}\right)\big(\delta-K\left( \log(K)-1\right)\varepsilon-f\left(\varepsilon,K\right)\big).
\end{align*}
Finalement, on a donc
\begin{align*}
0 & \geqslant  1 -K(1-\varepsilon)\left( \dfrac{g\left(\varepsilon,K\right)}{f\left(\varepsilon,K\right)}+\dfrac{K\varepsilon}{g\left(\varepsilon,K\right)}-\dfrac{K\varepsilon}{\delta+2\varepsilon}\right)-\dfrac{1-\varepsilon}{m}  \\ & \ \ \ -\left(1-\dfrac{\varepsilon}{1+\varepsilon}\right)\big(K\left( \log(K)-1\right)\varepsilon+f\left(\varepsilon,K\right)\big)-\delta\dfrac{\varepsilon}{1+\varepsilon}-\varepsilon \\ & \geqslant  1 - K(1-\varepsilon)\left( \dfrac{g\left(\varepsilon,K\right)}{f\left(\varepsilon,K\right)}+\dfrac{K\varepsilon}{g\left(\varepsilon,K\right)}-\dfrac{K\varepsilon}{\delta+2\varepsilon}\right)-\dfrac{1-\varepsilon}{m}  \\ & \ \ \ -\big(K\left( \log(K)-1\right)\varepsilon+f\left(\varepsilon,K\big)\right)-\delta\varepsilon-\varepsilon .
\end{align*}
Ainsi 
$$K(1-\varepsilon)\left( \dfrac{g\left(\varepsilon,K\right)}{f\left(\varepsilon,K\right)}+\dfrac{K\varepsilon}{g\left(\varepsilon,K\right)}-\dfrac{K\varepsilon}{\delta+2\varepsilon}\right)+K\left( \log(K)-1\right)\varepsilon+f\left(\varepsilon,K\right)+\delta\varepsilon+\varepsilon \geqslant  1-\frac{1-\varepsilon}{m},$$
et comme on a supposé $m\geqslant \dfrac{\mu(I_K)}{\varepsilon^2_K b}$, on a
$$1-\frac{1-\varepsilon}{m}\geqslant 1-(1-\varepsilon)\frac{\varepsilon^2_K b}{\mu(I_K)}\geqslant 1-(1-\varepsilon)\frac{K\varepsilon}{\delta+2\varepsilon},$$
où la dernière inégalité provient de \eqref{maj_eps_2} et du corollaire \ref{mu(I_K)}. Ainsi
$$K(1-\varepsilon)\left( \dfrac{g\left(\varepsilon,K\right)}{f\left(\varepsilon,K\right)}+\dfrac{K\varepsilon}{g\left(\varepsilon,K\right)}\right)-(K-1)(1-\varepsilon)\dfrac{K\varepsilon}{\delta+2\varepsilon}+K\left( \log(K)-1\right)\varepsilon+f\left(\varepsilon,K\right)+\delta\varepsilon+\varepsilon \geqslant  1.$$
Pour aboutir à une absurdité, on voudrait que le terme de gauche soit le plus petit possible, ce qui nous conduit à choisir $f\left(\varepsilon,K\right)=K\varepsilon^{1/3}$ et $g\left(\varepsilon,K\right)=K\varepsilon^{2/3}$.
\begin{Remarque}
On peut faire ce choix car on a bien pour tout $K\geqslant 2$ et tout $\varepsilon > 0$ 
\begin{itemize}
\item $g\left(\varepsilon,K\right) <f\left(\varepsilon,K\right)$. 
\item $\mu(I_K)-f\left(\varepsilon,K\right)b>0 $, cf. remarque \ref{pasdepbpourlechoixdef}.
\end{itemize}
\end{Remarque} 
De cette manière on a 
$$3K\varepsilon^{1/3}(1-\varepsilon)-(K-1)(1-\varepsilon)\dfrac{K\varepsilon}{\delta+2\varepsilon}+K\left( \log(K)-1\right)\varepsilon+\delta\varepsilon+\varepsilon \geqslant  1,$$
or ceci est absurde par la remarque \ref{K>1} et les hypothèses \eqref{hyp_1} et \eqref{hyp_2}.

Ainsi $m=1$, et donc $B$ est inclus dans un intervalle de $\mathbb{T}$. Comme il contient $0$, nous connaissons la structure de $B$ dans $\mathbb{R}$ :
$$ B=B_0\sqcup B_1,$$
où $B_0\subseteq \left[0,b_+\right]$, $B_1\subseteq \left[1-b_-,1\right]$, $\left\lbrace b_+,1-b_- \right\rbrace\subset B$ et $b_+ +b_-\leqslant b+\min\limits_{\substack{i=1,...,K}}\varepsilon^2_ib$.
$$ $$
Voici un exemple de ce à quoi peut ressembler $B$ : \\
\\
$$\begin{minipage}[l]{16cm}
\includegraphics[height=1.47cm]{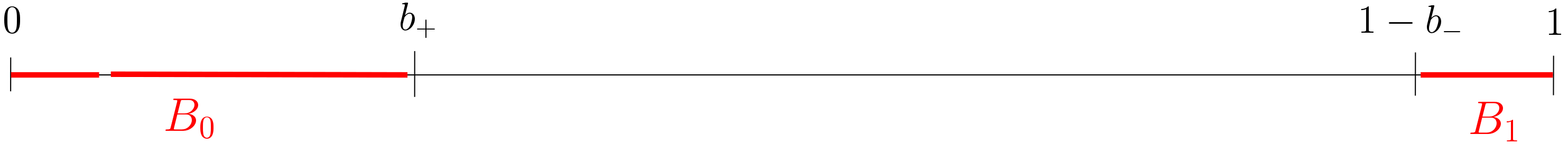}
\end{minipage}$$
$$ $$
D'après \eqref{maj_eps_2}, en particulier on a $\min\limits_{\substack{i=1,...,K}}\varepsilon^2_ib\leqslant\varepsilon b$ et donc
\begin{equation}\label{maj_b++b-}
b_+ +b_-\leqslant b+\varepsilon b
\end{equation}
\begin{equation}\label{maj_b+}
b_+\leqslant \lambda(B_0)+\varepsilon b
\end{equation}
et
\begin{equation}\label{maj_b-}
b_-\leqslant \lambda(B_1)+\varepsilon b.
\end{equation}

\subsection{Structure principale de A}

Comme conclu précédemment, on a $ B=B_0\sqcup B_1,$ où $B_0\subseteq \left[0,b_+\right]$, $B_1\subseteq \left[1-b_-,1\right]$, $b_+,1-b_-\in B$ et $b_+ +b_-\leqslant b+\min\limits_{\substack{i=1,...,K}}\varepsilon^2_ib$.
De même, $\tilde{A}_K$ est inclus dans un intervalle de $\mathbb{T}$, qui lui, ne contient pas $0$ (cf. \eqref{I_k-loin-de-0}). Ainsi $ \tilde{A}_K\subseteq\left[i_K^-,i_K^+\right],$ où $\left\lbrace i^-_K,i^+_K \right\rbrace\subset\tilde{A}_K $, et $i_K^+ -i_K^- =\mu(I_K)\leqslant\mu\left(\tilde{A}_K\right)+\varepsilon^2_Kb$. On rappelle que $\mathscr{L}_A=\left\lbrace l\geqslant 0 \ \vert \ A_l\neq\varnothing \right\rbrace$, que pour tout $x\in \tilde{A}_k$, $\mathscr{L}_x = \left\lbrace n\in\mathbb{N} \ \vert \ n+x\in A \right\rbrace$ et que pour tout $k\in\left\lbrace 1,...,K \right\rbrace$, on a défini $\Omega\left(\tilde{A}_k\right)$ par
$$\Omega\left(\tilde{A}_k\right)=\left\lbrace i\in\mathscr{L}_A \ \vert \ \left( A_i \mod 1 \right)\cap \tilde{A}_k \neq \varnothing \right\rbrace .$$
Nous utiliserons fréquemment la remarque suivante. Pour tout ensemble $E\subset\mathbb{R}$ mesurable et borné, on a 
\begin{equation}\label{lemme+(0,1)}
\lambda\big( E+\left\lbrace 0,1 \right\rbrace\big)\geqslant\lambda(E)+\mu\big(\pi(E)\big).
\end{equation}

Nous allons commencer par déterminer l'emplacement des éléments de $A$ se projetant modulo $1$ sur $\tilde{A}_K$. Par définition, chaque élément de $\tilde{A}_K$ s'exprime dans $K$ étages différents de $A$. Ici, on appelle "étage", tout segment entre deux entiers consécutifs. Il s'agit de déterminer ces étages en premier lieu.

Dans un second temps, nous résoudrons la même question pour tous les ensembles $\tilde{A}_k$ jusque $k=1$ afin d'obtenir la structure principale de $A$. 

\subsubsection{ Étape 0 : contribution de $\tilde{A}_K$ dans $A$}

\begin{lemma}\label{omega(tilde(AK)}
Il existe $a\in\mathbb{N}$ tel que $\Omega\left(\tilde{A}_K\right)=\left\llbracket a,a+K-1 \right\rrbracket.$
\end{lemma}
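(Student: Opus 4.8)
The plan is to argue by contradiction: suppose $\Omega(\tilde{A}_K)$ is not a block of $K$ consecutive integers. We know from \eqref{min_sigma1} that $\#\Omega(\tilde{A}_K)=\sigma_1(\tilde{A}_K)\geqslant K$, since every element of $\tilde{A}_K$ lifts to at least $K$ distinct floors of $A$. If $\Omega(\tilde{A}_K)$ has more than $K$ elements, or exactly $K$ elements but spread over an interval of integers of length $>K-1$, then the integer set $\Omega(\tilde{A}_K)$ (more precisely its convex hull) is ``too long''. First I would translate this length information into a lower bound for $\lambda(A+B)$ that beats the hypothesis $\lambda(A+B)=\lambda(A)+(K+\delta+\varepsilon)b$. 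The mechanism is the one already used repeatedly above: since $0,1\in B$, adding $B$ to $A$ forces, via \eqref{lemme+(0,1)}, an extra contribution of $\mu(\pi(A))=\mu(\tilde A_1)$ plus, on the floors where $\tilde A_K$ lives, the full ``width'' $\mu(I_K)\approx\delta b$ of that piece; and the spread of $\Omega(\tilde A_K)$ over more than $K$ floors creates an additional $S_l$ that must contain a translate of $\tilde A_K$ (of measure $\approx\delta b$) or at least of $B$, whence an extra $\approx\min(\delta,1)\,b$ beyond what the critical configuration allows.

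The key steps, in order, would be: (i) record that $m=1$ has just been proved, so $A=\bigsqcup_{l\in\mathscr{L}_A}A_l$ with $A_l=A\cap(l+I_1')$, $B=B_0\sqcup B_1$, and for $l\in\mathscr{L}_A$ one has $A_l+B_0\subseteq S_l$ and $A_l+B_1\subseteq S_{l+1}$; (ii) show $\Omega(\tilde{A}_K)$ is an interval of integers — if there were a gap, i.e. $l,l+2\in\Omega(\tilde A_K)$ but $l+1\notin\Omega(\tilde A_K)$, then $\tilde A_K^{\,l}$ and $\tilde A_K^{\,l+2}$ are nonempty but the corresponding floor $l+1$ of $A$ receives nothing from $\tilde A_K$, and I would derive a contradiction by comparing the measure of $A$ reconstructed floor-by-floor (using \eqref{Ak_en_fct_de_AK} and Lemma \ref{mu(A_K)}, which pin down $\sum_k\mu(\tilde A_k)=\lambda(A)$ very tightly) against what the gap forces — a gap means some floor carries less than $\mu(\tilde A_K)\approx\delta b$ worth of ``$\tilde A_K$-mass'' while an adjacent floor must carry the slack, pushing $\mu(I_1)$ or some $\mu(I_k)$ above the budget allowed by \eqref{hyp_0}/\eqref{hyp_4}; (iii) once $\Omega(\tilde A_K)=\llbracket a, a+N-1\rrbracket$ is an interval, show $N=K$: the lower bound $N\geqslant K$ is \eqref{min_sigma1}, and for the upper bound, if $N\geqslant K+1$ then running the telescoping sum-of-measures estimate (exactly as in the derivation of \eqref{debut_m=1}, now with $m=1$) over the floors $a,\dots,a+N-1$ and their images under $+\{0,1\}$ yields $\lambda(A+B)\geqslant\lambda(A)+b+(\text{something like }\mu(I_K)-O(\varepsilon)b)$ on top of the generic count, i.e. an extra $\delta b - O(\varepsilon)b$, contradicting $\lambda(A+B)-\lambda(A)-b=(K-1+\delta+\varepsilon)b$ once one checks the arithmetic against \eqref{hyp_1}.

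The main obstacle I anticipate is step (ii)/(iii) done carefully: ruling out a \emph{gap} is more delicate than ruling out excess length, because a gap does not immediately produce an obviously oversized interval $I_j$; one has to feed the missing $\tilde A_K$-contribution on the gap floor into the identities \eqref{Ak_en_fct_de_AK}–Lemma \ref{mu(A_K)} and track how it inflates one of the $\mu(I_k)$ or forces $\mathscr{L}_A$ to spread, then close the loop via \eqref{LA<=Km} and the sum \eqref{eps=somme} of the $\varepsilon^i_k$. I would handle this by first proving the statement for the convex hull (an interval of integers of the right length $K$) using the sharp measure identities, and only then noting that an interval of $\mathbb Z$ of cardinality $\geqslant K$ whose convex hull has length $K-1$ must be exactly $\llbracket a,a+K-1\rrbracket$. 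The bookkeeping of the error terms $\varepsilon^1_k,\varepsilon^2_k,\varepsilon^3_k$ against the three smallness hypotheses \eqref{hyp_1}–\eqref{hyp_3} is routine but must be done; I expect the decisive inequality to reduce, as in \eqref{contradiction_m=1_m<}, to something of the shape $\delta\leqslant O(K\varepsilon^{1/3})$, contradicted by \eqref{hyp_1}.
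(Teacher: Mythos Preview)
Your proposal has a genuine gap: you never confront the possibility that different points $x\in\tilde{A}_K$ have different lift sets $\mathscr{L}_x$. By definition $\Omega(\tilde{A}_K)=\bigcup_{x\in\tilde{A}_K}\mathscr{L}_x$, and each $\mathscr{L}_x$ has exactly $K$ elements (since $K_A=K$), but the union may well be larger or fail to be an interval if the $\mathscr{L}_x$ vary with $x$. Your step~(iii) tacitly assumes that an extra floor $l\in\Omega(\tilde{A}_K)$ contributes a full translate of $\tilde{A}_K$ (hence $\approx\delta b$ of measure) to $A+B$; but a priori floor $l$ may meet $\pi_A^{-1}(\tilde{A}_K)$ only in a set of tiny measure, so no ``extra $\delta b$'' is forced. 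Likewise, in step~(ii) a gap at $l+1$ says nothing about $\mu(I_k)$ for any $k$: the identities \eqref{Ak_en_fct_de_AK} and Lemma~\ref{mu(A_K)} involve only the measures $\mu(\tilde{A}_k)$, not how $\pi_A^{-1}(\tilde{A}_K)$ is distributed across floors. Finally, invoking \eqref{LA<=Km} with $m=1$ does not give $\#\mathscr{L}_A\leqslant K$ (that bound relied on $\lambda(B_M)\geqslant(1-\varepsilon)b/m$, which for $m=1$ only yields $\lambda(B_M)\geqslant b/2$ and hence $\#\mathscr{L}_A\lesssim 2K$).

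The paper's argument is quite different and supplies exactly the missing ingredient. It first shows that \emph{all} $\mathscr{L}_x$ coincide, using the fact that $\tilde{A}_K$ sits in an interval $I_K$ with only $\varepsilon_K^2 b$ of slack: any $x\in\tilde{A}_K$ has a neighbour $y\in\tilde{A}_K$ with $|x-y|\leqslant\varepsilon_K^2 b$, and for such a pair one checks that every floor $l\in\mathscr{L}_x\cup\mathscr{L}_y$ contributes at least $b-2|x-y|$ to $\tilde{S}_l\cup\tilde{S}_{l+1}$, whence $\mu\big(\tilde{S}_{\#(\mathscr{L}_x\cup\mathscr{L}_y)}\big)\geqslant b-2\varepsilon_K^2 b$. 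Comparing with the known value $\mu(\tilde{S}_{K+1})\approx\delta b$ (via \eqref{presque_égalités} and Lemma~\ref{mu(A_K)}) forces $\#(\mathscr{L}_x\cup\mathscr{L}_y)\leqslant K$, hence $\mathscr{L}_x=\mathscr{L}_y$; one then propagates along $\tilde{A}_K$. Only \emph{after} establishing $\mathscr{L}_x\equiv\mathscr{L}$ does the paper prove $\mathscr{L}$ is an interval, by the one-line observation that $\mu\big(\tilde{S}_{\#(\mathscr{L}\cup(\mathscr{L}+1))}\big)\geqslant\mu(\tilde{A}_K)>\varepsilon_{K+2}^3 b=\mu(\tilde{S}_{K+2})$, so $\#(\mathscr{L}\cup(\mathscr{L}+1))\leqslant K+1$. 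The controlling quantities throughout are the levels $\mu(\tilde{S}_n)$, not the floor-by-floor sums you propose.
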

\begin{proof}
On a $ \tilde{A}_K\subseteq\left[i_K^-,i_K^+\right],$
où $i_K^+ -i_K^- =\mu(I_K)\leqslant\mu\left(\tilde{A}_K\right)+\varepsilon^2_Kb$. Donc pour tout $x\in\tilde{A}_K$, il existe $y\in\tilde{A}_K$ tel que $y\neq x$ et $\left| x-y \right|\leqslant\varepsilon^2_Kb$. Soit un tel $y$ et supposons sans perdre en généralité que $x<y$. Soit $l\in\left( \mathscr{L}_x\cup \mathscr{L}_y\right)$, on a d'un côté 
$$\mu\left(\left(l+\left[y,x+b_+\right]\right)\cap S_l\right)\geqslant -(y-x)+\mu(B_0), $$
où on rappelle que $S_l=(A+B)\cap\frac{l+\big( J_+\cup (J_--1)\big)+I_1'}{m}$. De l'autre côté,
$$\mu\left(\left(1+l+\left[y-b_-,x\right]\right)\cap S_{l+1}\right)\geqslant -(y-x)+\mu(B_1). $$
Notons que le fait que les segments puissent être vides ou que les membres de droite puissent être négatifs ne pose pas de problème pour la suite de l'argumentation. Ainsi
$$\mu\left(\tilde{S}_{\#\left( \mathscr{L}_x\cup \mathscr{L}_y\right)}\right)\geqslant -(y-x)+\mu(B_0)-(y-x)+\mu(B_1)\geqslant b-2(y-x)\geqslant b-2\varepsilon^2_Kb.$$
Or d'après \eqref{presque_égalités} et le lemme \ref{mu(A_K)}, on a 
$$ \mu\left(\tilde{S}_{K+1}\right)= \mu\left(\tilde{A}_K\right)+\varepsilon^1_K b =\delta b +\dfrac{1}{K}\sum\limits_{\substack{k=1}}^{K-1} k\left(\varepsilon^1_{k+1} -\varepsilon^2_{k+1}\right) b + \varepsilon^1_K b.$$
Ainsi 
$$
\mu\left(\tilde{S}_{\#\left( \mathscr{L}_x\cup \mathscr{L}_y\right)}\right)-\mu\left(\tilde{S}_{K+1}\right)\geqslant \Big(1-\delta-\frac{K+1}{K}\varepsilon^2_K-\varepsilon^1_K- \dfrac{1}{K}\sum\limits_{\substack{k=1}}^{K-1}k\varepsilon^1_{k+1}\Big)b>0,
$$
par \eqref{eps=somme}, \eqref{maj_eps_1}, \eqref{maj_eps_2} et l'hypothèse \eqref{hyp_2}. Ainsi $\#\left( \mathscr{L}_x\cup \mathscr{L}_y\right)\leqslant K$. Mais $x,y\in\tilde{A}_K$ donc $\#\mathscr{L}_x\geqslant K$ et $\#\mathscr{L}_y\geqslant K$. Donc
$$\mathscr{L}_x=\mathscr{L}_y=:\mathscr{L},$$
et de proche en proche, on obtient $\mathscr{L}_x=\mathscr{L}$ quel que soit $x\in\tilde{A}_K$. Comme $0,1\in B$, $l+\tilde{A}_K\subseteq S_l$ et $l+1+\tilde{A}_K\subseteq S_{l+1}$ quel que soit $l\in\mathscr{L}$, alors 
$$\mu\left(\tilde{S}_{\#\left(\mathscr{L}\cup\left(\mathscr{L}+1\right)\right)}\right)\geqslant\mu\left(\tilde{A}_K\right) .$$
D'autre part $\mu\left(\tilde{S}_{K+2}\right)=\varepsilon^3_{K+2}b<\mu\left(\tilde{A}_K\right),$ d'où $\#\left(\mathscr{L}\cup\left(\mathscr{L}+1\right)\right)\leqslant K+1=\#\mathscr{L}+1$, ce qui implique que $\mathscr{L}$ est composé d'entiers consécutifs. Et donc finalement, il existe $a\in\mathbb{N}$ tel que $\Omega\left(\tilde{A}_K\right)=\left\llbracket a,a+K-1 \right\rrbracket.$
\end{proof}
Nous venons de montrer que $\left\llbracket a,a+K-1 \right\rrbracket+\tilde{A}_K\subset A$.

Afin de traduire sur $A$ et $S$ les informations dont nous disposons sur $\left\lbrace \tilde{A}_k\right\rbrace_{k=1}^K$ et $\left\lbrace\tilde{S}_k\right\rbrace_{k=1}^{K_S}$, nous définissons les "projetés inverses" $\pi^{-1}_A$ et $\pi^{-1}_S$ comme suit : Pour tout sous-ensemble $E$ de $\mathbb{T}$ on pose
$$\pi^{-1}_A\left( E\right) =\left\lbrace x\in A \ \vert \ x\mod 1\in E \right\rbrace , $$
et $\pi^{-1}_S\left( E\right) =\left\lbrace x\in S \ \vert \ x\mod 1\in E \right\rbrace$. Avec ces définitions, on a directement 
\begin{equation}\label{pi^-1(tilde(A_K))}
\lambda\left(\pi^{-1}_A\left( \tilde{A}_K\right)\right)=K\mu\left(\tilde{A}_K\right),
\end{equation}
et avec le lemme \ref{omega(tilde(AK)} 
\begin{equation}\label{pi^-1(tilde(A_K)+(0,1))}
\lambda\left(\pi^{-1}_A\left( \tilde{A}_K\right)+\left\lbrace 0,1 \right\rbrace\right)=(K+1)\mu\left(\tilde{A}_K\right).
\end{equation}
Ces égalités sont claires car $K=K_A$ mais pour tout entier $k\in\left\lbrace 1,...,K-1 \right\rbrace$, l'égalité devient
\begin{equation}\label{pi^-1(tilde(A)}
\lambda\left(\pi^{-1}_A\left( \tilde{A}_k\right)\right)=k\mu\left(\tilde{A}_k\right)+\sum\limits_{\substack{l=k+1}}^{K}\mu\left( \tilde{A}_l \right).
\end{equation}
En effet $\pi^{-1}_A\left( \tilde{A}_k\right)$ est l'ensemble des éléments de $A$ qui se répètent au moins $k$ fois modulo $1$, c'est donc la réunion disjointe, pour $l$ variant de $k$ à $K_A$, des ensembles des éléments de $A$ qui se répètent exactement $l$ fois modulo $1$. La mesure de l'ensemble des éléments de $A$ qui se répètent exactement $l$ fois modulo $1$ est égale à $l\mu\big(\tilde{A}_l\setminus\tilde{A}_{l+1}\big)$, et ainsi on a
$$\lambda\left(\pi^{-1}_A\left( \tilde{A}_k\right)\right)  = \sum\limits_{\substack{l=k}}^{K_A}l\mu\left( \tilde{A}_l\setminus \tilde{A}_{l+1} \right) = \sum\limits_{\substack{l=k}}^{K_A}l\mu\left( \tilde{A}_l \right)-\sum\limits_{\substack{l=k}}^{K_A}l\mu\left( \tilde{A}_{l+1} \right)  =k\mu(\tilde{A}_k)+ \sum\limits_{\substack{l=k+1}}^{K_A}\mu\left( \tilde{A}_l \right), $$
ce qui prouve la formule car $K_A=K$. 

De même, pour tout entier $k\geqslant 1$, on a la formule
\begin{equation}\label{pi^-1(tilde(S)}
\lambda\left(\pi^{-1}_S\left( \tilde{S}_k\right)\right)=k\mu\left(\tilde{S}_k\right)+\sum\limits_{\substack{i=k+1}}^{K+1}\mu\left(\tilde{S}_{i}\right)+\sum\limits_{\substack{i\geqslant K+2}}\varepsilon^3_i b.
\end{equation}
\begin{proof}
Il suffit de suivre le même procédé puis d'utiliser $\eqref{maj_eps_3}$
$$
\lambda\left(\pi^{-1}_S\left( \tilde{S}_k\right)\right)= k\mu\left(\tilde{S}_k\right)+\sum\limits_{\substack{i=k+1}}^{K_S}\mu\left(\tilde{S}_{i}\right) =k\mu\left(\tilde{S}_k\right)+\sum\limits_{\substack{i=k+1}}^{K+1}\mu\left(\tilde{S}_{i}\right)+\sum\limits_{\substack{i\geqslant K+2}}\varepsilon^3_i b.$$
\end{proof}
Nous pouvons écrire $\pi^{-1}_A\left(\tilde{A}_k\right)$ sous différentes formes
$$ \pi^{-1}_A\left(\tilde{A}_k\right) =\left\lbrace x\in A \ \vert \ x\mod 1\in\tilde{A}_k  \right\rbrace = \left\lbrace x\in A \ \vert \ \#\big(\left\lbrace x+\mathbb{N} \right\rbrace\cap A\big)\geqslant k \right\rbrace =\left\lbrace x+\mathscr{L}_x \ \vert \ x\in \tilde{A}_k \right\rbrace ,$$
et donc en particulier on a $\pi^{-1}_A\left(\tilde{A}_K\right)=\left\llbracket a,a+K-1 \right\rrbracket+\tilde{A}_K$.

\subsubsection{Stratégie et Étape 1 : $k=K$}

Écrivons $A=\bigsqcup\limits_{\substack{k=1}}^K \dot{A_k},$ où $\dot{A_K}=\pi^{-1}_A\left( \tilde{A}_{K}\right)=\left\llbracket a,a+K-1 \right\rrbracket+\tilde{A}_K $ et quel que soit $k\in\left\lbrace 1,...,K-1 \right\rbrace$
$$ \dot{A_k}=\pi^{-1}_A\left( \tilde{A}_{k}\setminus \tilde{A}_{k+1}\right)=\left\lbrace x\in A \ \vert \ \#\mathscr{L}_x=k \right\rbrace  = \pi^{-1}_A\left( \tilde{A}_{k}\right)\setminus \pi^{-1}_A\left( \tilde{A}_{k+1}\right).$$
Nous allons comparer $A$ à l'ensemble $A'=\bigsqcup\limits_{\substack{k=1}}^K A'_k,$ où $A'_K=\left\llbracket a,a+K-1 \right\rrbracket+ I_K,$ et pour tout $k\in\left\lbrace 1,...,K-1 \right\rbrace$
$$A'_k=\Big(\left\llbracket a,a+k-1 \right\rrbracket+ I_K +\left( K-k\right)\big( \left[0,b_+\right]\sqcup\left[1-b_-,1\right]\big)\Big)\setminus A'_{k+1}.$$
Voici une représentation de $A'$ pour $K=5$ :
$$\begin{minipage}[l]{17cm}
\includegraphics[height=9cm]{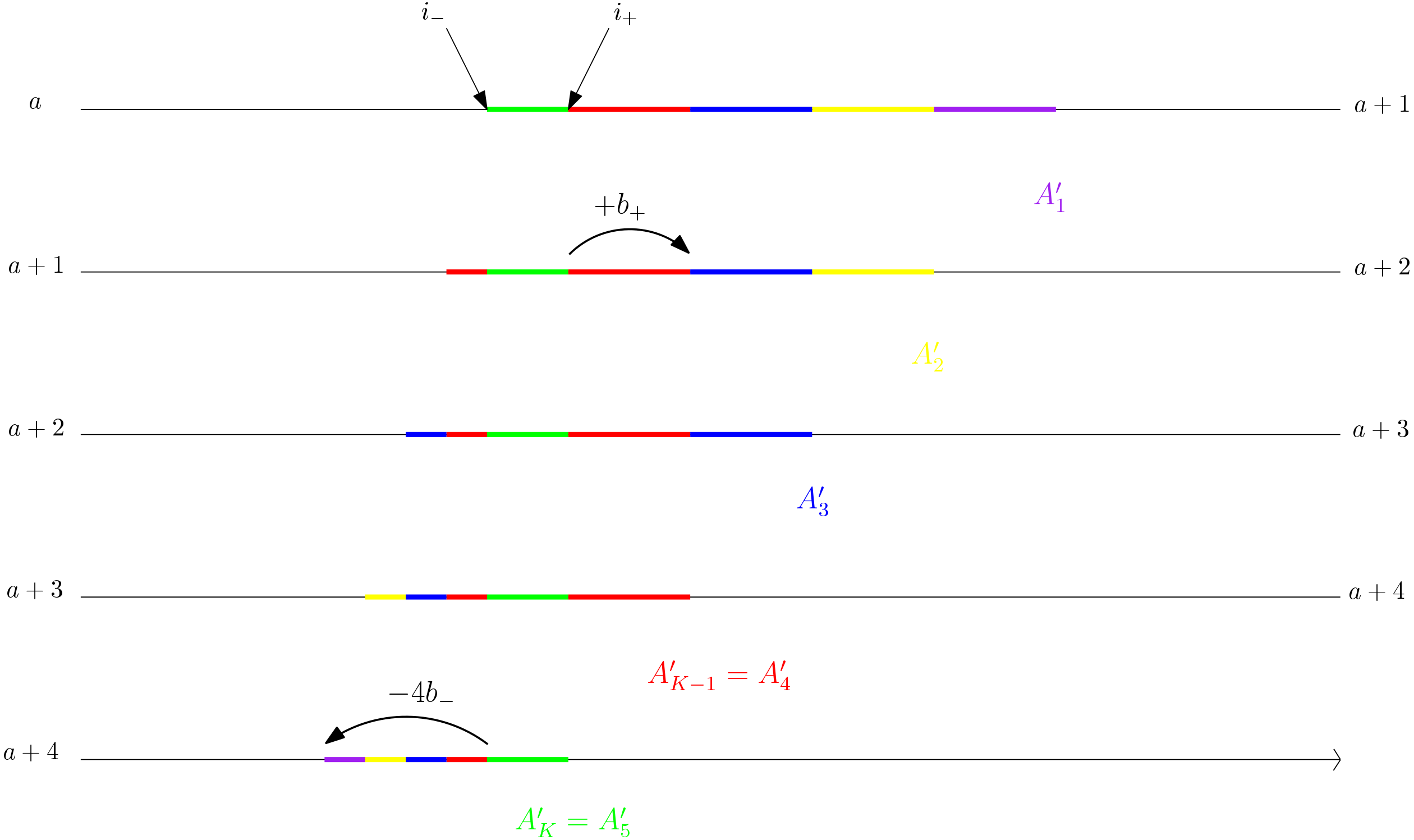}
\end{minipage}$$
$A'_K$ est $I_K$ au $K$ étages de $\Omega\left(\tilde{A}_K\right)$, puis pour tout $k<K$, $A'_{k-1}$ est l'ajout de $\left[0,b_+\right]$ à droite de chacun des $k-1$ premiers étages de $A'_k$ et de $\left[-b_-,0\right]$ à gauche de chacun des $k-1$ derniers étages de $A'_k$. Il faut le voir comme un ensemble "idéal" (sans petit trou) proche de l'ensemble $A_0$ des théorèmes \ref{thm_Anne} et \ref{main_result}.

Pour comparer $A$ et $A'$, nous montrerons que pour tout $k\in\left\lbrace 1,...,K \right\rbrace$, $A'_k$ et $\dot{A_k}$ sont très proches. Tout d'abord, on a $\tilde{A}_K\subseteq I_K$ et $\Omega(\tilde{A}_K)=\left\llbracket a,a+K-1 \right\rrbracket$ donc $\dot{A_K}\subseteq A'_K$ et donc
\begin{equation}\label{etape_K}
\lambda\left(A'_K\cap \dot{A_K}\right)=\lambda\big( \dot{A_K}\big).
\end{equation} 
De plus $\mu(I_K)\leqslant\mu\left(\tilde{A}_K\right)+\varepsilon^2_K b$ donc $\lambda\left(A'_K\setminus \dot{A_K}\right)\leqslant K\varepsilon^2_K b$. Pour les autres indices, le raisonnement sera similaire bien que plus technique.

\subsubsection{Étapes suivantes.}

Soit $k\in\left\lbrace 2,...,K \right\rbrace$, on a $\dot{A_{k-1}}=\pi^{-1}_A\left( \tilde{A}_{k-1}\setminus \tilde{A}_{k}\right),$ et 
$$ A'_{k-1}=\Big(\left\llbracket a,a+k-2 \right\rrbracket+ I_K +\left( K-k+1\right)\big( \left[0,b_+\right]\sqcup\left[1-b_-,1\right]\big)\Big)\setminus A'_{k}.$$
Pour montrer que ces deux ensembles sont proches, nous commençons par montrer que $\ddot{A_{k-1}}=\dot{A_{k-1}}+\left\lbrace 0,1 \right\rbrace$ est proche de
\begin{align*}
A''_{k-1} & =A'_{k-1}+\left\lbrace 0,1 \right\rbrace = \Big(\left\llbracket a,a+k-1 \right\rrbracket+ I_K +(K-k+1)\big( \left[0,b_+\right]\sqcup\left[1-b_-,1\right]\big)\Big)\setminus \big( A'_{k}+\left\lbrace 0,1 \right\rbrace\big) \\ & = \Big( A'_k+\big( \left[0,b_+\right]\sqcup\left[1-b_-,1\right]\big)\Big)\setminus \big( A'_{k}+\left\lbrace 0,1 \right\rbrace\big) .
\end{align*}
En effet, \eqref{presque_égalités} impliquera que chacun de ces deux ensembles est proche de $\dot{S_{k}}=\pi^{-1}_S\left(\tilde{S}_{k}\setminus\tilde{S}_{k+1}\right)$. Nous allons donc contrôler $\lambda\left(\ddot{A_{k-1}}\cap \dot{S_{k}}\right)$ puis $\lambda\left(A''_{k-1}\cap \dot{S_{k}}\right)$ pour pouvoir contrôler $\lambda\Big(\big( A''_{k-1}\cap\dot{S_{k}}\big)\cap\big( \ddot{A_{k-1}}\cap \dot{S_{k}}\big)\Big)$ et finalement $\lambda\left(A'_{k-1}\cap \dot{A_{k-1}}\right)$. 
Lors des première et troisième étapes, nous utilisons le fait que si $E$ et $F$ sont deux ensembles bornés tous deux inclus dans $H$ (borné également) alors 
\begin{equation}\label{EinterF}
\lambda\big( E\cap F \big)=\lambda(E)+\lambda(F)-\lambda\big( E\cup F\big)\geqslant \lambda(E)+\lambda(F)-\lambda(H).
\end{equation}
\begin{enumerate}
\item \textbf{Minoration de} $\boldsymbol{\lambda\left(\ddot{A_{k-1}}\cap \dot{S_{k}}\right) :}$ \\
\\
Comme $0,1\in B$, $\pi^{-1}_A\big(\tilde{A}_k\big)+\left\lbrace 0,1 \right\rbrace\subseteq\pi^{-1}_S\big(\tilde{S}_{k+1}\big)$ donc d'une part
$$\dot{S_{k}}=\pi^{-1}_S\big(\tilde{S}_k\big)\setminus \pi^{-1}_S\big(\tilde{S}_{k+1}\big)\subseteq \pi^{-1}_S\big(\tilde{S}_k\big)\setminus\left(\pi^{-1}_A\big(\tilde{A}_k\big)+\left\lbrace 0,1 \right\rbrace\right) ,$$ 
et d'autre part $\ddot{A_{k-1}}=\dot{A_{k-1}}+\left\lbrace 0,1 \right\rbrace\subseteq \pi^{-1}_S\big(\tilde{S}_k\big)$. De plus par définition 
$$\dot{A_{k-1}}= \pi^{-1}_A\big(\tilde{A}_{k-1}\big)\setminus \pi^{-1}_A\big(\tilde{A}_k\big) =\pi^{-1}_A\big(\tilde{A}_{k-1}\setminus\tilde{A}_k\big),$$
donc $\left(\dot{A_{k-1}}+\mathbb{N}\right)\cap\left(\tilde{A}_k+\mathbb{N}\right)=\varnothing $, ainsi $\ddot{A_{k-1}}\cap \left(\pi^{-1}_A\big(\tilde{A}_k\big)+\left\lbrace 0,1 \right\rbrace\right)=\varnothing $ (car $\pi^{-1}_A\big(\tilde{A}_k\big)+\left\lbrace 0,1 \right\rbrace\subset \tilde{A}_k+\mathbb{N}$ ), d'où
$$\ddot{A_{k-1}}\subseteq \pi^{-1}_S\big(\tilde{S}_k\big)\setminus\left(\pi^{-1}_A\big(\tilde{A}_k\big)+\left\lbrace 0,1 \right\rbrace\right).$$
On peut à présent utiliser \eqref{EinterF} avec $H=\pi^{-1}_S\big(\tilde{S}_k\big)\setminus\left(\pi^{-1}_A\big(\tilde{A}_k\big)+\left\lbrace 0,1 \right\rbrace\right)$, $E=\ddot{A_{k-1}}$ et $F=\dot{S_{k}}$. On obtient
$$\lambda\left(\ddot{A_{k-1}}\cap \dot{S_{k}}\right) \geqslant \lambda\left(\ddot{A_{k-1}}\right) + \lambda\left(\dot{S_{k}}\right) - \lambda\Big(\pi^{-1}_S\big(\tilde{S}_k\big)\setminus\left(\pi^{-1}_A\big(\tilde{A}_k\big)+\left\lbrace 0,1 \right\rbrace\right)\Big) .$$
ainsi
$$\lambda\left(\ddot{A_{k-1}}\cap \dot{S_{k}}\right) \geqslant \lambda\left(\ddot{A_{k-1}}\right) + \lambda\left(\dot{S_{k}}\right) -\lambda\Big(\pi^{-1}_S\big(\tilde{S}_k\big)\Big)+\lambda\left(\pi^{-1}_A\big(\tilde{A}_k\big)+\left\lbrace 0,1 \right\rbrace\right) , $$
et par \eqref{lemme+(0,1)}
$$\lambda\left( \pi^{-1}_A\big(\tilde{A}_k\big)+\left\lbrace 0,1 \right\rbrace\right)\geqslant\lambda\left(\pi^{-1}_A\big(\tilde{A}_k\big)\right)+\mu\left(\tilde{A}_k\right) ,$$
donc
$$\lambda\left(\ddot{A_{k-1}}\cap \dot{S_{k}}\right) \geqslant \lambda\left(\ddot{A_{k-1}}\right) + \lambda\left(\dot{S_{k}}\right) -\Bigg(\lambda\Big(\pi^{-1}_S\big(\tilde{S}_k\big)\Big)-\lambda\Big(\pi^{-1}_A\big(\tilde{A}_k\big)\Big)\Bigg)+\mu\left(\tilde{A}_k\right).$$
De plus \eqref{pi^-1(tilde(A)} et \eqref{pi^-1(tilde(S)} impliquent
\begin{align*}
& \lambda\Big(\pi^{-1}_S\big(\tilde{S}_k\big)\Big)-\lambda\left(\pi^{-1}_A\big(\tilde{A}_k\big)\right)\\ & = k\mu\left(\tilde{S}_k\right)+\sum\limits_{\substack{i=k+1}}^{K+1}\mu\left(\tilde{S}_{i}\right)+\sum\limits_{\substack{i\geqslant K+2}}\varepsilon^3_i b -k\mu\left(\tilde{A}_k\right)-\sum\limits_{\substack{i=k+1}}^{K}\mu\left(\tilde{A}_{i}\right) \\ & = \sum\limits_{\substack{i=k}}^{K}\left(\mu\left(\tilde{S}_{i+1}\right)-\mu\left(\tilde{A}_{i}\right)\right) +k\mu\left(\tilde{S}_k\right)+\sum\limits_{\substack{i\geqslant K+2}}\varepsilon^3_i b -(k-1)\mu\left(\tilde{A}_k\right) \\ & = \sum\limits_{\substack{i=k}}^{K}\varepsilon^1_{i+1}b +k\mu\left(\tilde{S}_k\right)+\sum\limits_{\substack{i\geqslant K+2}}\varepsilon^3_i b -(k-1)\mu\left(\tilde{A}_k\right),
\end{align*}
d'où
$$\lambda\left(\ddot{A_{k-1}}\cap \dot{S_{k}}\right) \geqslant \lambda\left(\ddot{A_{k-1}}\right) + \lambda\left(\dot{S_{k}}\right)+k\mu\left(\tilde{A}_k\right)-k\mu\left(\tilde{S}_k\right)-\sum\limits_{\substack{i=k}}^{K}\varepsilon^1_{i+1}b-\sum\limits_{\substack{i\geqslant K+2}}\varepsilon^3_i b $$
et finalement
\begin{equation}\label{etape_k-1_1}
\lambda\left(\ddot{A_{k-1}}\cap \dot{S_{k}}\right) \geqslant k\mu\left(\tilde{A}_{k-1}\right) + \lambda\left(\dot{S_{k}}\right)-k\mu\left(\tilde{S}_k\right)-\sum\limits_{\substack{i=k}}^{K}\varepsilon^1_{i+1}b-\sum\limits_{\substack{i\geqslant K+2}}\varepsilon^3_i b ,
\end{equation}
puisque $\lambda\left(\ddot{A_{k-1}}\right)=k\left(\mu\left(\tilde{A}_{k-1}\right)-\mu\left( \tilde{A}_k\right)\right)$. \\
\\
\item \textbf{Minoration de} $\boldsymbol{\lambda\left(A''_{k-1}\cap \dot{S_{k}}\right) :}$ \\
\\
$\dot{S_{k}}=\pi^{-1}_S\left(\tilde{S}_{k}\setminus\tilde{S}_{k+1}\right)=\pi^{-1}_S\left(\tilde{S}_{k}\right)\setminus\pi^{-1}_S\left(\tilde{S}_{k+1}\right)$ donc
$$
\lambda\left( A''_{k-1}\cap \dot{S_{k}}\right) = \lambda\left( A''_{k-1}\cap \pi^{-1}_S\left(\tilde{S}_{k}\right)\right) - \lambda\left( A''_{k-1}\cap \pi^{-1}_S\left(\tilde{S}_{k+1}\right)\right) .$$
Or $\pi^{-1}_A\left(\tilde{A}_k\right)+\left\lbrace 0,1 \right\rbrace \subseteq \pi^{-1}_S\left(\tilde{S}_{k+1}\right)$ car $0,1\in B$ et $\left(\pi^{-1}_A\left(\tilde{A}_k\right)+\left\lbrace 0,1 \right\rbrace\right)\cap A''_{k-1}=\varnothing$, donc
\begin{align*}
\lambda\left( A''_{k-1}\cap \pi^{-1}_S\left(\tilde{S}_{k+1}\right)\right) & \leqslant \lambda\big( \pi^{-1}_S\left(\tilde{S}_{k+1}\right)\big)-\lambda\left( \pi^{-1}_A\left(\tilde{A}_k\right)+\left\lbrace 0,1 \right\rbrace\right)  \\ & \leqslant \lambda\left(\pi^{-1}_S\left(\tilde{S}_{k+1}\right)\right) -\lambda\left(\pi^{-1}_A\big(\tilde{A}_k\big)\right) -\mu\left(\tilde{A}_k\right),
\end{align*}
par \eqref{lemme+(0,1)}. De plus par \eqref{pi^-1(tilde(A)} et \eqref{pi^-1(tilde(S)}, on a
\begin{align*}
\lambda\left(\pi^{-1}_S\left(\tilde{S}_{k+1}\right)\right)-\lambda\left(\pi^{-1}_A\big(\tilde{A}_k\big)\right)= & (k+1)\mu\left(\tilde{S}_{k+1}\right)+\sum\limits_{\substack{i=k+2}}^{K+1}\mu\left(\tilde{S}_{i}\right)+\sum\limits_{\substack{i\geqslant K+2}}\varepsilon^3_i b  \\ & \ - \Big(  k\mu\left(\tilde{A}_k\right)+\sum\limits_{\substack{i=k+1}}^{K}\mu\left(\tilde{A}_{i}\right)\Big) \\ & \leqslant \mu\left(\tilde{A}_k\right)+(k+1)\varepsilon^1_{k+1}b+\sum\limits_{\substack{i=k+2}}^{K+1}\varepsilon^1_i b+\sum\limits_{\substack{i\geqslant K+2}}\varepsilon^3_i b.
\end{align*}
D'autre part, comme $i^-_K-(K-k-1)b_-,i^+_K+(K-k-1)b_+ \in\tilde{A}_k $ puisque $i^-_K,i^+_K\in\tilde{A}_K $, et $b_+,1-b_-\in B$ (donc les bords sont atteints à chaque étape), on a
$$\lambda\left( A''_{k-1}\cap \pi^{-1}_S\left(\tilde{S}_{k}\right)\right)\geqslant k b  ,$$
donc finalement
\begin{equation}\label{etape_k-1_2}
\lambda\left( A''_{k-1}\cap \dot{S_{k}}\right)\geqslant k b -(k+1)\varepsilon^1_{k+1}b-\sum\limits_{\substack{i=k+2}}^{K+1}\varepsilon^1_i b-\sum\limits_{\substack{i\geqslant K+2}}\varepsilon^3_i b .
\end{equation}
\item \textbf{Minoration de} $\boldsymbol{\lambda\left(\big( A''_{k-1}\cap\dot{S_{k}}\big)\cap\big( \ddot{A_{k-1}}\cap \dot{S_{k}}\big)\right) :}$ \\
\\
D'après \eqref{EinterF}, puis \eqref{etape_k-1_1} et \eqref{etape_k-1_2}, on a 
\begin{align*}
\lambda\Big( \big( A''_{k-1}\cap\dot{S_{k}}\big)\cap\big( \ddot{A_{k-1}}\cap \dot{S_{k}}\big)\Big) & \geqslant \lambda\big( A''_{k-1}\cap\dot{S_{k}}\big)+\lambda\big( \ddot{A_{k-1}}\cap \dot{S_{k}}\big) -\lambda\big(\dot{S_{k}}\big) \\ & \geqslant k b -(k+1)\varepsilon^1_{k+1}b-\sum\limits_{\substack{i=k+2}}^{K+1}\varepsilon^1_i b-\sum\limits_{\substack{i\geqslant K+2}}\varepsilon^3_i b+k\mu\left(\tilde{A}_{k-1}\right) \\ & \ \ \  + \lambda\left(\dot{S_{k}}\right)-k\mu\left(\tilde{S}_k\right)-\sum\limits_{\substack{i=k}}^{K}\varepsilon^1_{i+1}b-\sum\limits_{\substack{i\geqslant K+2}}\varepsilon^3_i b-\lambda\big(\dot{S_{k}}\big) \\ & \geqslant k b -(k+2)\varepsilon^1_{k+1}b-2\sum\limits_{\substack{i=k+2}}^{K+1}\varepsilon^1_i b-2\sum\limits_{\substack{i\geqslant K+2}}\varepsilon^3_i b \\ & \ \ \  +k\mu\left(\tilde{A}_{k-1}\right)-k\mu\left(\tilde{S}_k\right) ,
\end{align*}
d'où, comme $\mu(\tilde{A}_{k-1})\leqslant\mu(\tilde{S}_k)$ pour tout $k\geqslant 2$, on obtient
\begin{equation}\label{etape_k-1_3}
\lambda\Big( \big( A''_{k-1}\cap\dot{S_{k}}\big)\cap\big( \ddot{A_{k-1}}\cap \dot{S_{k}}\big)\Big)\geqslant k b-k\varepsilon^1_k b -(k+2)\varepsilon^1_{k+1}b-2\sum\limits_{\substack{i=k+2}}^{K+1}\varepsilon^1_i b-2\sum\limits_{\substack{i\geqslant K+2}}\varepsilon^3_i b .
\end{equation}
\item \textbf{Minoration de} $\boldsymbol{\lambda\left(A'_{k-1}\cap \dot{A_{k-1}}\right) :}$ \\
\\
Soit $x\in \tilde{A}_{k-1}\setminus\tilde{S}_{k+1}$. Comme $x+\mathscr{L}_x\subset A$ et $0,1\in B$, on a $x+\mathscr{L}_x+\left\lbrace 0,1 \right\rbrace\subset S$. De plus, $\#\mathscr{L}_x = k-1$ donc $\#\left(\mathscr{L}_x+\left\lbrace 0,1 \right\rbrace\right)\geqslant k$. Or $x\notin \tilde{S}_{k+1}$ donc $\#\left(\mathscr{L}_x+\left\lbrace 0,1 \right\rbrace\right)\leqslant k$ et donc finalement
$$\#\left(\mathscr{L}_x+\left\lbrace 0,1 \right\rbrace\right)=k=\# \mathscr{L}_x+1,$$
et donc nécessairement $\mathscr{L}_x$ est composé d'entiers consécutifs. Or $A'_{k-1}$ est également composé d'éléments à des étages consécutifs par construction et $A''_{k-1}=A'_{k-1}+\left\lbrace 0,1 \right\rbrace$, donc si $i>1$
\begin{align*}
& \mu\left( x\in \tilde{A}_{k-1}\setminus\tilde{S}_{k+1} \ \vert \ \#\left\lbrace \big(x+\mathscr{L}_x+\left\lbrace 0,1 \right\rbrace\big)\cap A''_{k-1} \right\rbrace =i\right) \\ & \leqslant \mu \left( x\in \tilde{A}_{k-1}\setminus\tilde{S}_{k+1} \ \vert \ \#\left\lbrace \big(x+\mathscr{L}_x\big)\cap A'_{k-1} \right\rbrace =i-1\right) .
\end{align*}
Finalement
\begin{align*}
\lambda\left(A'_{k-1}\cap \dot{A_{k-1}}\right) & \geqslant \lambda\left(A'_{k-1}\cap \dot{A_{k-1}}\cap\dot{S_{k}} \right)  \\ & \geqslant \lambda\big( A''_{k-1}\cap\dot{S_{k}}\cap \ddot{A_{k-1}}\big)-\mu\Big(\big( A''_{k-1}\cap\dot{S_{k}}\cap \ddot{A_{k-1}}\big)\mod 1\Big) \\ & \geqslant \lambda\big( A''_{k-1}\cap\dot{S_{k}}\cap \ddot{A_{k-1}}\big)-\big(1+\min\limits_{\substack{i=1,...,K}}\varepsilon^2_i\big) b  \\ & \geqslant k b-k\varepsilon^1_k b -(k+2)\varepsilon^1_{k+1}b-2\sum\limits_{\substack{i=k+2}}^{K+1}\varepsilon^1_i b-2\sum\limits_{\substack{i\geqslant K+2}}\varepsilon^3_i b-\big(1+\min\limits_{\substack{i=1,...,K}}\varepsilon^2_i\big)b \\ & \geqslant (k-1)b - \left( k\varepsilon^1_k +(k+2)\varepsilon^1_{k+1}+2\sum\limits_{\substack{i=k+2}}^{K+1}\varepsilon^1_i +2\sum\limits_{\substack{i\geqslant K+2}}\varepsilon^3_i+\min\limits_{\substack{i=1,...,K}}\varepsilon^2_i\right) b.
\end{align*}
\end{enumerate}
Ainsi pour tout $k\in\left\lbrace 1,...,K-1 \right\rbrace$
$$\lambda\left(A'_{k}\cap \dot{A_{k}}\right) \geqslant kb - \left( (k+1)\varepsilon^1_{k+1} +(k+3)\varepsilon^1_{k+2}+2\sum\limits_{\substack{i=k+3}}^{K+1}\varepsilon^1_i +2\sum\limits_{\substack{i\geqslant K+2}}\varepsilon^3_i+\min\limits_{\substack{i=1,...,K}}\varepsilon^2_i\right) b ,$$
et donc
\begin{equation}\label{etape_<K-1}
\lambda\left(A'_{k}\cap \dot{A_{k}}\right) \geqslant \lambda\big( \dot{A_{k}}\big) - \left( k\varepsilon^2_{k+1}+\varepsilon^1_{k+1} +(k+1)\varepsilon^1_{k+2}+2\sum\limits_{\substack{i=k+2}}^{K+1}\varepsilon^1_i +2\sum\limits_{\substack{i\geqslant K+2}}\varepsilon^3_i+\min\limits_{\substack{i=1,...,K}}\varepsilon^2_i\right) b
\end{equation}
car $ \lambda\big( \dot{A_{k}}\big)=k\big( 1+\varepsilon^2_{k+1}-\varepsilon^1_{k+1}\big) b$ par \eqref{presque_égalités}.

\subsubsection{Conclusion : Structure principale de $A$.}

Nous sommes désormais en mesure de donner une localisation précise de la majorité des éléments de $A$. Rappelons que $A=\bigsqcup\limits_{\substack{k=1}}^K \dot{A_k}$ et qu $A'=\bigsqcup\limits_{\substack{k=1}}^K A'_k$. Par \eqref{etape_K} et \eqref{etape_<K-1}, on a 
\begin{align*}
\lambda\big(A\cap A'\big) & \geqslant\sum\limits_{\substack{k=1}}^K\lambda\big(\dot{A_k}\cap A'_k\big)= \lambda\big(\dot{A}_K\big)+\sum\limits_{\substack{k=1}}^{K-1}\lambda\big(\dot{A_k}\cap A'_k\big) \\ & \geqslant \lambda(A)-\sum\limits_{\substack{k=1}}^{K-1} \left( k\varepsilon^2_{k+1}+\varepsilon^1_{k+1} +(k+1)\varepsilon^1_{k+2}+2\sum\limits_{\substack{i=k+2}}^{K+1}\varepsilon^1_i +2\sum\limits_{\substack{i\geqslant K+2}}\varepsilon^3_i+\min\limits_{\substack{i=1,...,K}}\varepsilon^2_i\right) b .
\end{align*}
Or après calculs et en utilisant \eqref{eps=somme} on peut obtenir l'inégalité
$$\lambda\big(A\cap A'\big)  \geqslant  \lambda(A)-3K\varepsilon +\sum\limits_{\substack{k=1}}^{\lfloor K/4\rfloor} (K-4k)\varepsilon^2_{K+1-k} ,$$
et donc
$$\lambda(A)-\lambda\big(A\cap A'\big) \leqslant 3K\varepsilon +\sum\limits_{\substack{k=1}}^{\lfloor K/4\rfloor} (K-4k)\varepsilon^2_{K+1-k} .$$
Enfin par \eqref{maj_eps_2}, on obtient 
\begin{equation}\label{AetA'>}
\lambda\big(A\cap A'\big)\geqslant \lambda(A)-\Big(K^2\log\left( K\right)-K\big(K(1+\log 4)-7\big) \Big)\varepsilon b .
\end{equation}
On a donc montré que $A$ était principalement inclus dans
$$A'= a'+\bigsqcup\limits_{\substack{k=0}}^{K-1} \left[ k-kb_-, k+\mu\left(I_K\right) +(K-k-1)b_+ \right] .$$
Or par le corollaire \ref{mu(I_K)} puis par \eqref{eps=somme}
$$
\mu\left(I_K\right)  \leqslant \delta b +\dfrac{1}{K}\sum\limits_{\substack{k=1}}^{K-1} k\left(\varepsilon^1_{k+1} -\varepsilon^2_{k+1}\right) b + \varepsilon^2_K b \leqslant \delta b +\dfrac{1}{K}\sum\limits_{\substack{k=1}}^{K-1} k\varepsilon^1_{k+1} b + \frac{1}{K}\varepsilon^2_K b \leqslant \left(\delta +\varepsilon\right) b .$$
Ainsi $\mu\left(I_K\right) \leqslant \left(\delta  +\varepsilon\right) b$ et 
$$A' \subseteq a'+\bigsqcup\limits_{\substack{k=0}}^{K-1} \left[ k-kb_-, k+\left(\delta  +\varepsilon\right) b +(K-k-1)b_+ \right] ,$$
où $a'=\min \tilde{A}_K$. Notons que l'hypothèse \eqref{hyp_0} implique que la réunion est bien disjointe. Pour plus de clarté, nous redéfinissons $A'$ en ce nouvel ensemble :
$$A' = a'+\bigsqcup\limits_{\substack{k=0}}^{K-1} \left[ k-kb_-, k+\left(\delta  +\varepsilon\right) b +(K-k-1)b_+ \right] .$$
Voici une représentation de $A'$ pour $K=5$.
$$\begin{minipage}[l]{16cm}
\includegraphics[height=8cm]{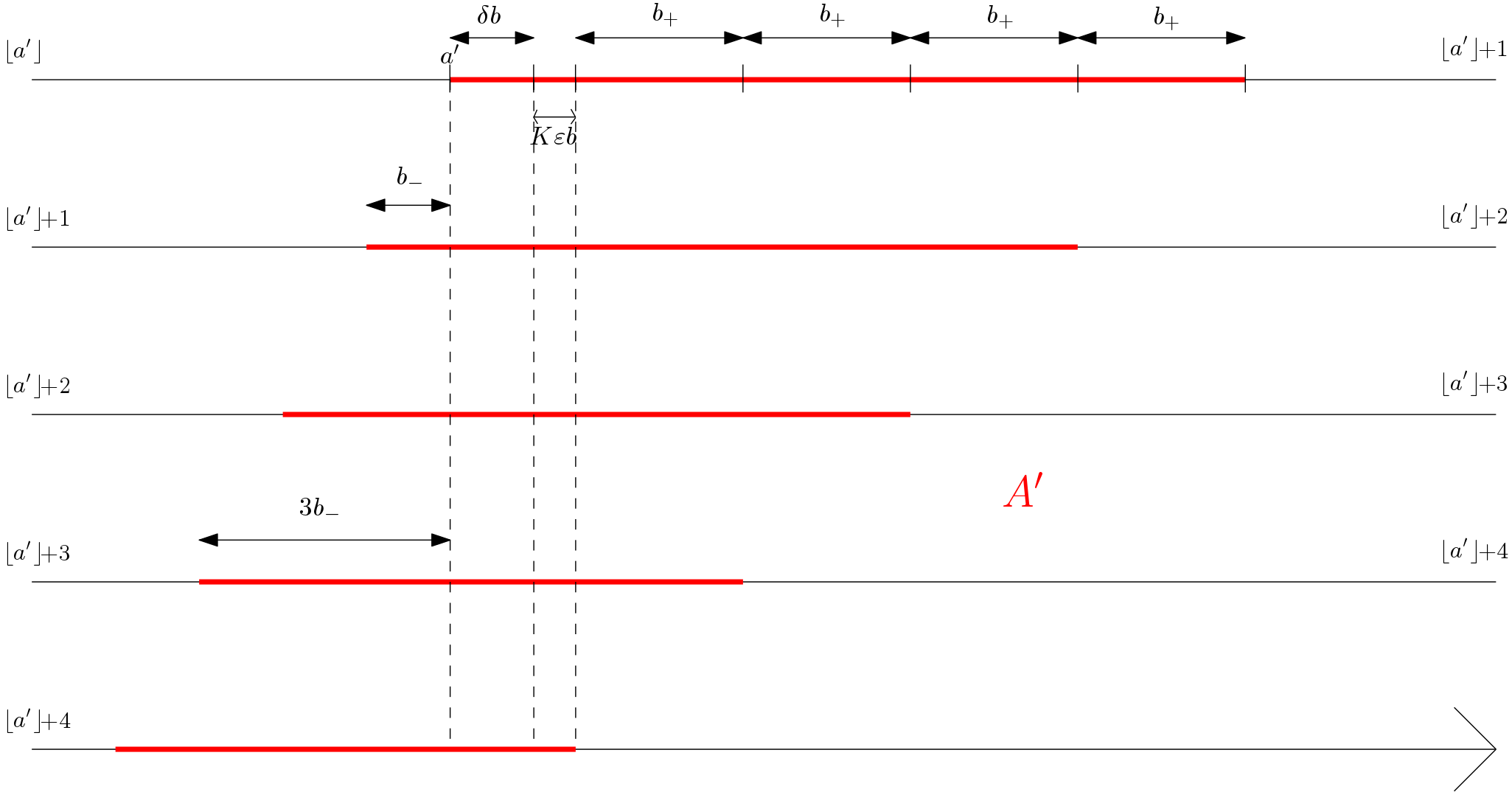}
\end{minipage}$$
$$ $$

\subsection{Structure totale de A}

\subsubsection{Stratégie et premiers résultats}

La partie précédente démontre qu'il existe un réel $a'$ tel que 
$$\lambda\big(A\cap A'\big)\geqslant \lambda(A)-\Big(K^2\log\left( K\right)-K\big(K(1+\log 4)-6\big) \Big)\varepsilon b ,$$
où
$$A'= a'+\bigsqcup\limits_{\substack{k=0}}^{K-1} \left[ k-kb_-, k+\delta b+\varepsilon b +(K-k-1)b_+ \right] .$$
Ainsi l'essentiel de $A$ se concentre dans cette réunion d'intervalles $A'$. Nous allons désormais prouver que tous les éléments de $A$ sont inclus dans un petit voisinage métrique de $A'$. Commençons par établir une minoration de la mesure de $(A\cap A')+B$.
\begin{lemma}\label{AetA'+B>}
On a
$$\lambda\big( (A\cap A')+B\big)\geqslant \lambda(A\cap A')+(K+\delta')b,$$
où $\delta'=\delta-\big(K\log\left( K\right)-K(1+\log 4)+7\big)\varepsilon$.
\end{lemma}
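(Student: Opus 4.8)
Le plan est de réappliquer l'inégalité de Ruzsa \eqref{lemme_Ruzsa}, non plus à $A$ mais à l'ensemble "nettoyé" $C:=A\cap A'$. Tout repose sur deux observations : d'une part passer de $A$ à $C$ ne coûte qu'un terme en $\varepsilon$ au niveau des mesures (grâce à \eqref{AetA'>}), d'autre part ce passage préserve la valeur $K_C=K$. En réinjectant ces deux faits dans \eqref{lemme_Ruzsa}, on obtiendra exactement la minoration annoncée, avec une perte $\delta-\delta'$ explicitement contrôlée.

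On commencerait par vérifier que $K_C=K$ ; c'est là, à mon sens, que se situe la seule subtilité réelle de l'argument. D'une part $C\subseteq A$ donne $\tilde{C}_k\subseteq\tilde{A}_k$ pour tout $k$, donc $K_C\leqslant K_A=K$. D'autre part, on a vu juste avant \eqref{etape_K} que $\dot{A_K}\subseteq A'_K\subseteq A'$, et par ailleurs $\dot{A_K}\subseteq A$, d'où $\dot{A_K}\subseteq C$ ; de plus $\dot{A_K}=\llbracket a,a+K-1\rrbracket+\tilde{A}_K$ (lemme \ref{omega(tilde(AK)}), donc tout $x\in\tilde{A}_K$ vérifie $x+\llbracket a,a+K-1\rrbracket\subseteq\dot{A_K}\subseteq C$, c'est-à-dire $\#\{n\in\mathbb{N}\ \vert\ n+x\in C\}\geqslant K$, soit $\tilde{A}_K\subseteq\tilde{C}_K$. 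En particulier $\tilde{C}_K\neq\varnothing$, d'où $K_C\geqslant K$, puis l'égalité. Autrement dit, le nettoyage ne détruit pas le point de multiplicité $K$, ce qui est précisément ce qui rend l'argument possible.

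On vérifierait ensuite que l'inégalité de Ruzsa \eqref{lemme_Ruzsa} s'applique au couple $(C,B)$. Comme $C\subseteq A$, on a $C+B\subseteq A+B$, donc $\lambda(C+B)\leqslant\lambda(A+B)=\lambda(A)+(K+\delta+\varepsilon)b$ ; en combinant avec $\lambda(C)\geqslant\lambda(A)-\Theta\varepsilon b$ (inégalité \eqref{AetA'>}), où $\Theta:=K^2\log(K)-K\big(K(1+\log 4)-7\big)$, et avec l'inégalité $\big(K+\delta+(\Theta+1)\varepsilon\big)b<1$ (comparaison directe des coefficients avec \eqref{hyp_4}), on obtient $\lambda(C+B)<\lambda(C)+1=\lambda(C)+D_B$. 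L'inégalité \eqref{lemme_Ruzsa}, jointe à $K_C=K$ (et à $\lambda(C)\neq 0$ pour $\varepsilon$ petit), donne alors
$$\lambda(C+B)\geqslant\frac{K+1}{K}\lambda(C)+\frac{K+1}{2}b=\lambda(C)+\frac{1}{K}\lambda(C)+\frac{K+1}{2}b.$$

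Il ne resterait plus qu'à traduire cela sous la forme voulue. En réinjectant $\lambda(C)\geqslant\big(\tfrac{K(K-1)}{2}+K\delta\big)b-\Theta\varepsilon b$ (via \eqref{A/B=K(K-1)2+Kdelta} et \eqref{AetA'>}) dans le terme $\tfrac{1}{K}\lambda(C)$, il vient $\tfrac{1}{K}\lambda(C)+\tfrac{K+1}{2}b\geqslant\big(K+\delta-\tfrac{\Theta}{K}\varepsilon\big)b$, et comme $\tfrac{\Theta}{K}=K\log(K)-K(1+\log 4)+7$, le membre de droite est exactement $(K+\delta')b$ ; d'où $\lambda(C+B)\geqslant\lambda(C)+(K+\delta')b$, soit le lemme. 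Les seuls points demandant un peu de soin sont le suivi de la constante $\Theta$ — il faut partir de l'estimation \eqref{AetA'>} avec le $-7$ pour que $\tfrac{\Theta}{K}$ coïncide avec le coefficient apparaissant dans $\delta'$ — et la vérification que l'hypothèse \eqref{hyp_4} est bien assez forte pour légitimer l'emploi de \eqref{lemme_Ruzsa}.
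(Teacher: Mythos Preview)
Your proof is correct, but it takes a somewhat different route from the paper's. The paper applies Ruzsa's theorem (Theorem~\ref{thm_Ruzsa}) directly to the pair $(A\cap A',B)$: from \eqref{AetA'>} one has $\tfrac{K(K-1)}{2}<\tfrac{\lambda(A\cap A')}{b}\leqslant\tfrac{K(K-1)}{2}+K\delta$, so the ratio-defined pair for $A\cap A'$ is $(K,\delta')$ with $\delta'\geqslant\delta-\tfrac{\Theta}{K}\varepsilon$, and since $(K+\delta')b\leqslant(K+\delta)b<1$ by \eqref{hyp_0}, Theorem~\ref{thm_Ruzsa} gives the conclusion immediately. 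No structural information about $C=A\cap A'$ is needed beyond the measure bound \eqref{AetA'>}.

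You instead go one level deeper and apply the underlying inequality \eqref{lemme_Ruzsa}, which forces you to identify $K_C$ rather than the ratio-defined $K$. Your argument that $K_C=K$ via $\dot{A}_K\subseteq A'_K\subseteq A'$ is correct (and survives the redefinition of $A'$, since the new $A'$ contains the old one), and your verification that \eqref{hyp_4} suffices for $\lambda(C+B)<\lambda(C)+D_B$ is also fine. The final computation matches. What your approach buys is that it makes the mechanism more transparent---one sees explicitly why the constant $\Theta/K$ appears---at the cost of invoking the structural inclusion $\dot{A}_K\subseteq A'$, which the paper's black-box application of Theorem~\ref{thm_Ruzsa} avoids entirely.
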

\begin{proof}
D'après \eqref{AetA'>}, on a
$$\lambda\big(A\cap A'\big)\geqslant \lambda(A)-\Big(K^2\log\left( K\right)-K\big(K(1+\log 4)-7\big) \Big)\varepsilon b ,$$
donc
\begin{align*}
\dfrac{\lambda\big(A\cap A'\big)}{b} & \geqslant \dfrac{\lambda(A)}{b}-\dfrac{\Big(K^2\log\left( K\right)-K\big(K(1+\log 4)-7\big) \Big)\varepsilon b}{b} \\ & \geqslant \frac{K(K-1)}{2}+K\delta-\Big(K^2\log\left( K\right)-K\big(K(1+\log 4)-7\big) \Big)\varepsilon    \\ & > \frac{K(K-1)}{2},
\end{align*}
où la dernière ligne est assurée par l'hypothèse \eqref{hyp_1}. D'autre part,
$$\dfrac{\lambda\big(A\cap A'\big)}{b}\leqslant\dfrac{\lambda (A)}{b}=\frac{K(K-1)}{2}+K\delta.$$
Ainsi il existe $\delta'$ tel que 
$$0<\delta-\big(K\log\left( K\right)-K(1+\log 4)+7\big)\varepsilon\leqslant\delta'\leqslant\delta<1,$$
et 
$$ \dfrac{\lambda\big(A\cap A'\big)}{b}=\frac{K(K-1)}{2}+K\delta'.$$
Donc finalement, comme $(K+\delta')b\leqslant (K+\delta)b<1$ par \eqref{hyp_0}, on peut utiliser le théorème de Ruzsa (théorème \ref{thm_Ruzsa}), et on a
$$\lambda\big( (A\cap A')+B\big)\geqslant \lambda(A\cap A')+(K+\delta')b,$$
où $\delta'\geqslant\delta-\big(K\log\left( K\right)-K(1+\log 4)+7\big)\varepsilon$. En redéfinissant naturellement $\delta'$, on obtient le lemme.
\end{proof}
Établissons maintenant un contrôle de la contribution de $(A\setminus A')+B$ à $S$.
\begin{lemma}\label{maj_erreur_AsansA'}
Soit $\tau_{K} =  K(K+1)\log (K)-K\big(K(1+\log 4)-6+\log 4\big)+7$. On a
$$\lambda\Big( \big((A\setminus A')+B\big)\setminus \big( (A\cap A')+B\big)\Big)\leqslant \tau_{K}\varepsilon b.$$
\end{lemma}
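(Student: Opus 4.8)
The plan is to identify the set $\big((A\setminus A')+B\big)\setminus\big((A\cap A')+B\big)$ as exactly the part of $S=A+B$ that is not already produced by the ``good'' piece $A\cap A'$, and then to invoke the two preceding results. Since $A=(A\cap A')\sqcup(A\setminus A')$ one has $S=\big((A\cap A')+B\big)\cup\big((A\setminus A')+B\big)$; as $(A\cap A')+B\subseteq S$, this yields the disjoint decomposition $S=\big((A\cap A')+B\big)\sqcup\Big(\big((A\setminus A')+B\big)\setminus\big((A\cap A')+B\big)\Big)$, so that
$$\lambda\Big(\big((A\setminus A')+B\big)\setminus\big((A\cap A')+B\big)\Big)=\lambda(S)-\lambda\big((A\cap A')+B\big).$$

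First I would replace $\lambda(S)$ by $\lambda(A)+(K+\delta+\varepsilon)b$ using the standing hypothesis, and lower bound $\lambda\big((A\cap A')+B\big)$ by $\lambda(A\cap A')+(K+\delta')b$ using Lemma \ref{AetA'+B>}. The $K$-terms cancel on subtraction, leaving an upper bound of the form $\lambda(A\setminus A')+(\delta+\varepsilon-\delta')b$, i.e. $\lambda(A\setminus A')$ plus a term linear in $\varepsilon b$ coming from $\delta-\delta'=\big(K\log K-K(1+\log 4)+7\big)\varepsilon$. Then I would bound $\lambda(A\setminus A')=\lambda(A)-\lambda(A\cap A')$ by $\big(K^2\log K-K(K(1+\log 4)-7)\big)\varepsilon b$ via \eqref{AetA'>}. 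Adding the two contributions and regrouping (using $K^2\log K+K\log K=K(K+1)\log K$ and collecting the terms linear in $K$) produces the bound $\tau_K\varepsilon b$, up to the same kind of $\pm1$ bookkeeping already absorbed in the redefinition of $\delta'$ in Lemma \ref{AetA'+B>}.

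The step expected to carry any content is the set-theoretic identity of the first paragraph; once it is in place, the lemma is a one-line consequence of Lemma \ref{AetA'+B>} (the good part $A\cap A'$ expands almost optimally under addition of $B$) together with \eqref{AetA'>} (the bad part $A\setminus A'$ has tiny measure). Consequently I do not anticipate a genuine obstacle here: the only real care is in tracking the precise value of $\tau_K$ through the arithmetic and in noting that the applicability of the Ruzsa inequality to $A\cap A'$ — in particular that $(K+\delta')b<1$ — was already verified inside the proof of Lemma \ref{AetA'+B>}.
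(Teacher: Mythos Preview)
Your proposal is correct and is essentially identical to the paper's own proof: the same disjoint decomposition of $S$, the same use of Lemma~\ref{AetA'+B>} to lower-bound $\lambda\big((A\cap A')+B\big)$, and the same appeal to \eqref{AetA'>} to bound $\lambda(A\setminus A')$, after which the two contributions are added to recover $\tau_K\varepsilon b$. Your remark about the $\pm1$ bookkeeping is apt; the paper itself silently absorbs the extra $+\varepsilon$ from $\delta-\delta'+\varepsilon$ into the constant.
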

\begin{proof}
Par hypothèse, on a $\lambda(A+B)= \lambda(A)+\left( K+\delta+\varepsilon\right)b$. De plus
\begin{align*}
\lambda(A+B) & = \lambda\Big( \big((A\cap A')\sqcup (A\setminus A') \big)+B\Big) \\ & = \lambda\big( (A\cap A')+B\big)+ \lambda\Big( \big((A\setminus A')+B\big)\setminus \big( (A\cap A')+B\big)\Big),
\end{align*}
ainsi, directement par le lemme \ref{AetA'+B>}, on obtient
\begin{align*}
\lambda\Big( \big((A\setminus A')+B\big)\setminus \big( (A\cap A')+B\big)\Big) & \leqslant  \lambda(A)+\left( K+\delta+\varepsilon\right)b-\left(\lambda(A\cap A')+(K+\delta')b\right) \\ & \leqslant \lambda\left( A\right)-\lambda(A\cap A')+(\delta-\delta'+\varepsilon)b \\ & \leqslant  \lambda\left( A\right)- \lambda(A\cap A')+\big(K\log\left( K\right)-K(1+\log 4)+7\big)\varepsilon b ,
\end{align*}
et donc finalement par \eqref{AetA'>}, on a
\begin{align*}
& \lambda\Big( \big((A\setminus A')+B\big)\setminus \big( (A\cap A')+B\big)\Big) \\ & \leqslant \Big(K^2\log\left( K\right)-K\big(K(1+\log 4)-7\big) \Big)\varepsilon b + \big(K\log\left( K\right)-K(1+\log 4)+7\big)\varepsilon b \\ & \leqslant K\big((K+1)\log (K)-K(1+\log 4)+6-\log 4\big)\varepsilon b  +7\varepsilon b.
\end{align*}
\end{proof}
Le lemme \ref{maj_erreur_AsansA'} nous donnera un contrôle de l'erreur commise en remplaçant $A$ par $A\cap A'$. Il permettra en particulier pour $x\in A\setminus A'$ de contrôler la contribution de $x+B$ hors de $A'+B$ et de conclure que $x$ est proche de $A'$. Nous sommes donc parés pour exhiber une région limite autour de $A'$ en dehors de laquelle aucun élément de $A$ ne peut se trouver. Commençons par établir une région limite autour de $\left[\min A',\max A'\right]$.

\subsubsection{Le cas des éléments de $A\setminus \left[\min A',\max A'\right]$}

On rappelle que 
$$ A'= a'+\bigsqcup\limits_{\substack{k=0}}^{K-1} \left[ k-kb_-, k+(\delta+\varepsilon) b +(K-k-1)b_+ \right] ,$$
et on pose pour tout $k\in\left\lbrace 0,...,K-1 \right\rbrace$
$$ A'_k=a'+\left[ k-kb_-, k+(\delta +\varepsilon)b +(K-k-1)b_+ \right],$$
de sorte que $A'=\bigsqcup\limits_{\substack{k=0}}^{K-1} A'_k$
(on peut voir $A'_k$ comme le $k$-ème étage de $A'$). 
$$ $$
$$\begin{minipage}[l]{17cm}
\includegraphics[height=8cm]{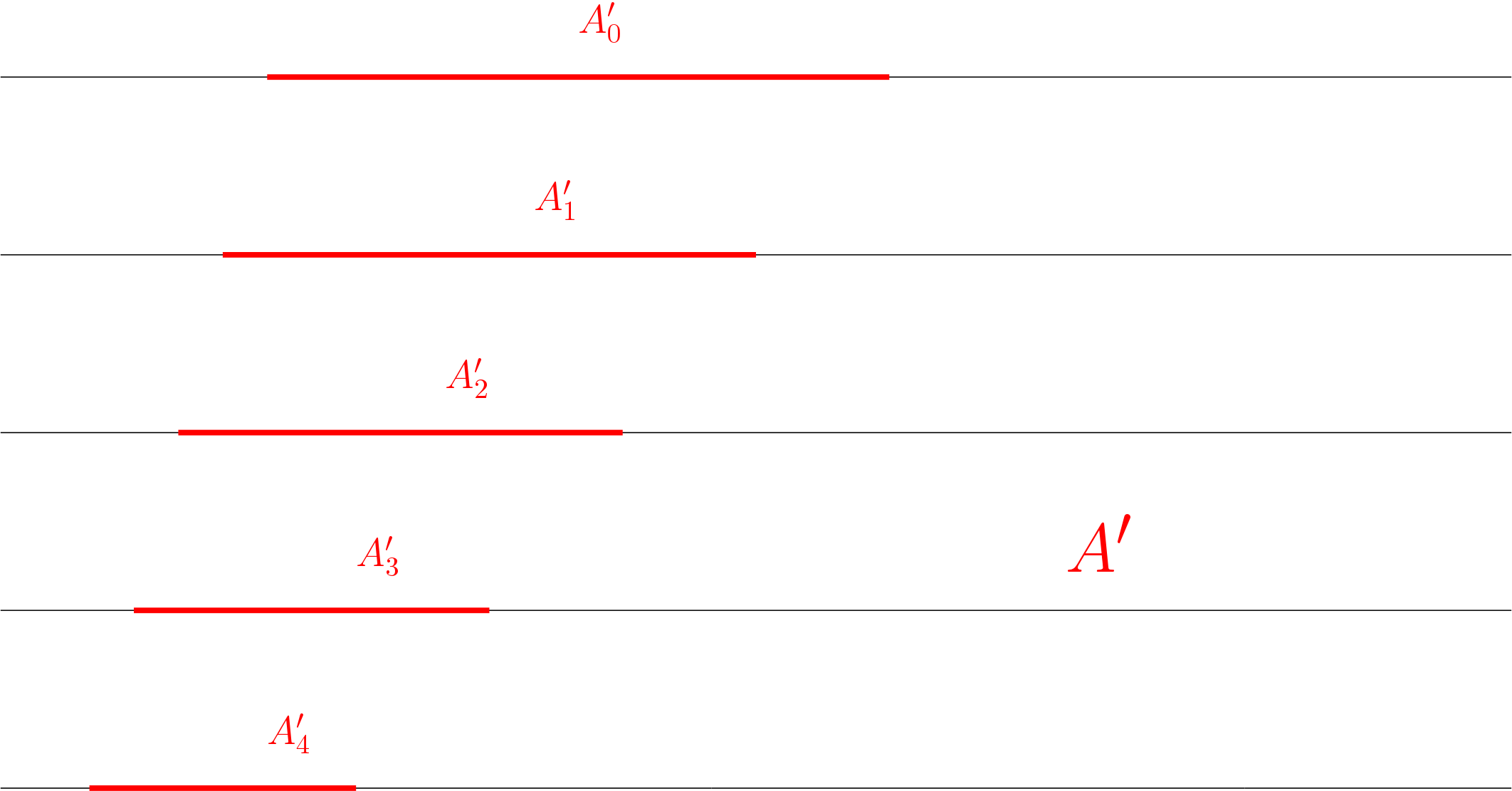}
\end{minipage}$$
$$ $$
Nous allons commencer par montrer que les éléments de $A\setminus A'$ ne peuvent pas être trop inférieurs à $\min A'$. 
\begin{lemma}\label{AsansA'pas_trop_avant}
Soit $x\in A$ tel que $x<\min A'$, on a nécessairement
$$x\geqslant \min A'-(\tau_{K}+1)\varepsilon b ,$$
où on rappelle que $\tau_{K}= (K(K+1)\log (K)-K\big(K(1+\log 4)-6+\log 4\big)+7 $.
\end{lemma}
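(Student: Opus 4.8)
The plan is to suppose, for contradiction, that some $x \in A$ satisfies $x < \min A' - (\tau_K+1)\varepsilon b$, and to extract from this a chunk of $x+B$ that lies strictly to the left of $A'+B$, hence outside $(A\cap A')+B$, large enough to contradict Lemma~\ref{maj_erreur_AsansA'}. Since $0 \in B$, we have $x = x + 0 \in x+B \subseteq (A\setminus A')+B$; the point is to show that a whole sub-interval of $x+B$ of measure $> \tau_K \varepsilon b$ avoids $(A\cap A')+B$. The natural candidate is $x + B_0$, where $B_0 = B \cap [0,b_+]$ is the left component of $B$, because $B_0$ accumulates at $0$ and so $x+B_0$ stays just to the right of $x$.

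First I would record that $\min\big( (A\cap A')+B \big) = \min(A\cap A') + \min B = \min A'$ (using $\min B = 0$ and the fact that $\min(A\cap A') = \min A'$, since the leftmost interval of $A'$, namely $a' + [0, (\delta+\varepsilon)b + (K-1)b_+]$, meets $A$ — indeed $a' = \min\tilde A_K \in \tilde A_K \subseteq A'$ and $a' \in A$ after the earlier normalisations). Then for any $y \in B_0 \subseteq [0,b_+]$ with $x + y < \min A'$ we automatically have $x+y \notin (A\cap A')+B$. So I would estimate the measure of $\{\, y \in B_0 : x+y < \min A' \,\} = B_0 \cap [0, \min A' - x)$. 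By hypothesis $\min A' - x > (\tau_K+1)\varepsilon b$, so this set contains $B_0 \cap [0, (\tau_K+1)\varepsilon b)$, whose measure is at least $\lambda(B_0) - \big(\lambda(B_0) - (\tau_K+1)\varepsilon b\big)_+$; but we need a genuine lower bound, which is where $b_+$ versus $\varepsilon b$ must be weighed.

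The key observation making the argument go through is that $x + B_0$ translated by anything $\le \lambda(B_0)$ cannot "hide" inside $(A\cap A')+B$ from the left: more precisely, $\big(x + B_0\big) \cap \big(-\infty, \min A'\big)$ is disjoint from $(A\cap A')+B$ and has measure at least $\min\big(\lambda(B_0),\, (\tau_K+1)\varepsilon b\big)$. If $\lambda(B_0) \ge (\tau_K+1)\varepsilon b$ this is $(\tau_K+1)\varepsilon b > \tau_K \varepsilon b$, contradicting Lemma~\ref{maj_erreur_AsansA'} directly since $x+B_0 \subseteq (A\setminus A')+B$ (as $x \in A\setminus A'$). If instead $\lambda(B_0) < (\tau_K+1)\varepsilon b$, then $B_0$ is tiny and I would instead work with $x + B$ as a whole, or with $x + B_1$ shifted by $1$: the translate $x + 1 + B_1$ (with $B_1 = B\cap[1-b_-,1]$) lies near $x+1 < \min A' + 1$, and one compares with the leftmost relevant stratum of $A'+B$; alternatively, since then $b_- = \lambda(B) - \lambda(B_0) \pm \varepsilon b$ is almost all of $b$, the set $x+B$ has measure $b$ concentrated just below $x+1$, again producing a contradiction with the error bound.

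The main obstacle I anticipate is the bookkeeping in the borderline regime $\lambda(B_0)$ small: one must check that the piece of $x+B$ (or $x+B_1+1$) that escapes $(A\cap A')+B$ still has measure exceeding $\tau_K\varepsilon b$, which forces a careful comparison of $\min A' - x$ (known $> (\tau_K+1)\varepsilon b$) against the gap structure of $A'+B$ near its left end, using $\min B = 0$, $b_+ + b_- \le (1+\varepsilon)b$, and hypothesis~\eqref{hyp_0} to guarantee the relevant intervals of $A'+B$ do not overlap. Once the escaping measure is bounded below by something $> \tau_K\varepsilon b$, Lemma~\ref{maj_erreur_AsansA'} is contradicted and the stated inequality $x \ge \min A' - (\tau_K+1)\varepsilon b$ follows.
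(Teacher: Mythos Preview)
Your plan is essentially the paper's own proof: bound $\lambda\big((x+B)\setminus((A\cap A')+B)\big)$ via Lemma~\ref{maj_erreur_AsansA'}, split according to whether $\lambda(B_0)$ exceeds $\tau_K\varepsilon b$, handle the large-$B_0$ case with $x+B_0$, and in the small-$B_0$ case switch to $x+B_1$. The large-$B_0$ case goes through exactly as you outline once you use $b_+\le\lambda(B_0)+\varepsilon b$ (your own remark ``we need a genuine lower bound'' is the right instinct; the missing $\varepsilon b$ is absorbed by the ``$+1$'' in $\tau_K+1$).

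Where your sketch is thin is precisely the regime you flag. When $\lambda(B_0)\le\tau_K\varepsilon b$, it is not enough to say ``$x+B_1$ is concentrated near $x+1$ and this contradicts the error bound'': the set $x+B_1$ can genuinely meet $A'+B$, and one must locate \emph{which} piece of $A'+B$ it meets. The paper splits into three sub-cases according to whether $x+B_1$ meets $A'_0+B_0$, $A'_1+B_0$, or both; the first two are shown impossible (using that the gap between $A'_0+B_0$ and $A'_1+B_0$ exceeds $b_-$ via \eqref{hyp_0} and \eqref{maj_b++b-}, and that $A'_0+B_0$ itself is too short to absorb $x+B_1$ when $b_+$ is tiny), and only in the remaining case does one read off $x\ge\min A'-(\tau_K+1)\varepsilon b$ directly. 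Your plan would succeed once this trichotomy is carried out, but as written it does not yet rule out, say, $x+B_1\subseteq A'_0+B_0$.

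One small correction: you do not need the claim $\min(A\cap A')=\min A'$. It suffices (and is what the paper uses) that $(A\cap A')+B\subseteq A'+B\subseteq[\min A',+\infty)$, so anything in $x+B$ lying strictly left of $\min A'$ already escapes $(A\cap A')+B$.
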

\begin{proof}
Soit $x\in A$ tel que $x<\min A'$. D'après le lemme \ref{maj_erreur_AsansA'}, on a
\begin{equation}\label{maj_x+BsansA'+B}
\lambda\big( (x+B)\setminus (A'+B)\big)\leqslant\lambda\Big(  \big((A\setminus A')+B\big)\setminus\big((A\cap A')+B\big)\Big)\leqslant \tau_{K} b\varepsilon<b,
\end{equation}
car par l'hypothèse \eqref{hyp_1}, on a $\varepsilon<\left(\frac{\delta}{3K}\right)^3\leqslant\frac{1}{\tau_K}$. 
Ainsi nécessairement $(x+B)\cap (A'+B)\neq\varnothing$. De plus, comme $x<\min A'$ et $B\subseteq\left[ 0,1\right]$, $x+B\subseteq\left[x,\min A'+1\right[$ et donc $x+B$ ne peut intersecter que les deux premiers morceaux de $A'+B$
$$(x+B)\cap (A'+B)=(x+B)\cap\big( (A'_0+B_0)\sqcup (A'_1+B_0)\big),$$
(car on a par construction $A'_0+B_1\subseteq A'_1+B_0$).
Aussi, $x+B=(x+B_0)\sqcup(x+B_1)$ donc 
$$ \big( (x+B_0)\sqcup(x+B_1) \big) \cap \big( (A'_0+B_0)\sqcup (A'_1+B_0)\big)\neq\varnothing,$$
et comme $x<\min A'$, $(x+B_0)\cap(A'_1+B_0)=\varnothing$, et donc on a
$$  \big((x+B_0)\cap(A'_0+B_0)\big)\sqcup\big((x+B_1)\cap(A'_0+B_0)\big)\sqcup\big((x+B_1)\cap(A'_1+B_0)\big)\neq\varnothing.$$
Nous allons discuter selon la taille de $B_0$. 
Commençons par une remarque : sous l'hypothèse \eqref{hyp_1}, on a $\varepsilon<\left(\frac{\delta}{3K}\right)^3$, ce qui implique quel que soit $K\geqslant 2$
\begin{equation}\label{eps<1/2tauK}
\varepsilon<\frac{1}{2(\tau_K+1)}.
\end{equation}
\textbf{\underline{A) Premier cas } : $\lambda(B_0)> \tau_{K}\varepsilon b $}. \\
\\
Si $\lambda(B_0)> \tau_{K}\varepsilon b $, par \eqref{maj_x+BsansA'+B}, on a 
$$\lambda\big( (x+B_0)\cap (A'+B)\big)\geqslant\lambda(B_0)-\tau_{K}\varepsilon b,$$
or comme $(x+B_0)\subseteq x+\left[0,b_+\right]$, $(A'+B)\subseteq\left[\min A',+\infty\right[$ et $x<\min A'$, on a
$$\lambda\big( \left[ \min A',x+b_+\right]\big)=\lambda\big( \left[ x,x+b_+\right]\cap \left[\min A',+\infty\right[\big)\geqslant\lambda(B_0)-\tau_{K}\varepsilon b.$$
Ainsi comme $b_+\leqslant\lambda(B_0)+\varepsilon b$ par \eqref{maj_b+}, on a 
$$x\geqslant\lambda(B_0)-\tau_K\varepsilon b -(b_+-\min A')\geqslant \min A'-(\tau_{K}+1)\varepsilon b.$$

\textbf{\underline{B) Second cas } : $\lambda(B_0)\leqslant \tau_{K}\varepsilon b $}. \\
\\
Si $\lambda(B_0)\leqslant \tau_{K}\varepsilon  b $, alors $\lambda(B_1)\geqslant b- \tau_{K}\varepsilon  b >\tau_{K}\varepsilon  b $ par \eqref{eps<1/2tauK} et donc nécessairement par le lemme \ref{maj_erreur_AsansA'}, on a
$$(x+B_1)\cap (A'+B)\neq\varnothing.$$
On rappelle que par construction $A'_0+B_1\subseteq A'_1+B_0$, il y a donc trois possibilités :

a) $(x+B_1)\cap (A'_0+B_0)\neq\varnothing$ et $(x+B_1)\cap (A'_1+B_0)\neq\varnothing$

b) $(x+B_1)\cap (A'_0+B_0)\neq\varnothing$ et $(x+B_1)\cap (A'_1+B_0)=\varnothing$

c) $(x+B_1)\cap (A'_0+B_0)=\varnothing$ et $(x+B_1)\cap (A'_1+B_0)\neq\varnothing$ \\
que nous allons traiter séparément. \\
\\
\textbf{a) le cas $\boldsymbol{(x+B_1)\cap (A'_0+B_0)\neq\varnothing$ et $(x+B_1)\cap (A'_1+B_0)\neq\varnothing}$ est impossible.} \\
$$ $$
$$\begin{minipage}[l]{17cm}
\includegraphics[height=1.5cm]{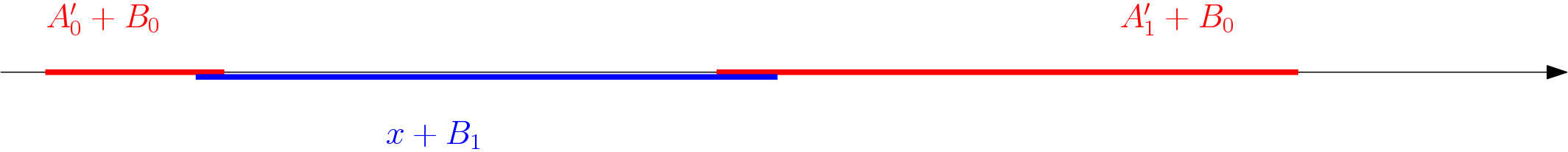}
\end{minipage}$$
$$ $$
En effet, pour qu'on soit dans cette configuration, il faudrait que l'écart entre les deux premiers morceaux de $A'$ soit inférieur au diamètre de $B_1$, or nous sommes dans le second cas et $\lambda(B_0)$ est petit donc les deux premiers morceaux $A'_0$ et $A'_1$ sont petits et donc l'écart entre eux est grand. C'est l'idée que nous allons développer ici. 

On suppose donc que $(x+B_1)\cap (A'_0+B_0)\neq\varnothing$ et $(x+B_1)\cap (A'_1+B_0)\neq\varnothing$. Ainsi nécessairement, $\left[\max A'_0+b_+ , \min A'_1 \right]\subseteq x+\left[1-b_-,1\right]$ et donc par \eqref{EinterF}
\begin{align*}
\lambda\big( (x+B_1)\setminus(A'+B)\big) & \geqslant \lambda\big( (x+B_1)\cap\left[\max A'_0+b_+ , \min A'_1 \right]\big) \\ & \geqslant \lambda\big(\left[\max A'_0+b_+ , \min A'_1 \right]\big)+\lambda(x+B_1)-\lambda\big(x+\left[1-b_-,1\right]\big) \\ & \geqslant \lambda\big(\left[\max A'_0+b_+ , \min A'_1 \right]\big)-\varepsilon b.
\end{align*}
Or
\begin{align*}
\lambda\big( \left[ \max A'_0+b_+ , \min A'_1 \right]\big) & =1-\big( \delta b+\varepsilon b+(K-1)b_++b_+\big)-b_- \ \ \text{ par définition de } A' \\ & \geqslant 1-\big( \delta b+\varepsilon b+(K-1)b_+\big)-(b+\varepsilon b) \ \ \text{ par \eqref{maj_b-}} \\ & \geqslant 1-\big( \delta b+\varepsilon b+(K-1)(b-b_-+\varepsilon b)\big)-(b+\varepsilon b) \ \text{ par \eqref{maj_b++b-}} \\ & \geqslant 1-\big( K+\delta+2\varepsilon \big)b+(K-1)b_- -(K-1)\varepsilon b \\ & \geqslant (K-1)b_- -2(K-1)\varepsilon b \ \ \text{ par \eqref{hyp_0}} \\ & \geqslant  (K-1)\big( b_--2\varepsilon b\big) \\ & \geqslant  \big( b_--2\varepsilon b\big) \ \ \text{ car $K\geqslant 2$ (cf. remarque \ref{K>1})}.
\end{align*}
Ainsi
$$\lambda\big( (x+B)\setminus(A'+B)\big)\geqslant \lambda\big( (x+B_1)\setminus(A'+B)\big)\geqslant b_--2\varepsilon b,$$
et donc finalement par le lemme \ref{maj_erreur_AsansA'} et parce qu'on est dans le cas \textbf{B}
$$\tau_{K}\varepsilon b \geqslant b-\tau_{K}\varepsilon  b-2\varepsilon b,$$
ce qui contredit \eqref{eps<1/2tauK}. Ce cas de figure est donc impossible. \\
\\
\textbf{b) le cas $\boldsymbol{(x+B_1)\cap (A'_0+B_0)\neq\varnothing$ et $(x+B_1)\cap (A'_1+B_0)=\varnothing}$ est impossible.} \\
\\
On suppose par l'absurde que $x\in A$ est tel que $x<\min A'$, $\lambda(B_0)\leqslant \tau_{K}\varepsilon  b $ et 
$$ \left\lbrace \begin{array}{ll} (x+B_1)\cap (A'_0+B_0)\neq\varnothing \\ (x+B_1)\cap (A'_1+B_0)=\varnothing \end{array} \right. .$$
$$ $$
$$\begin{minipage}[l]{17cm}
\includegraphics[height=1.4cm]{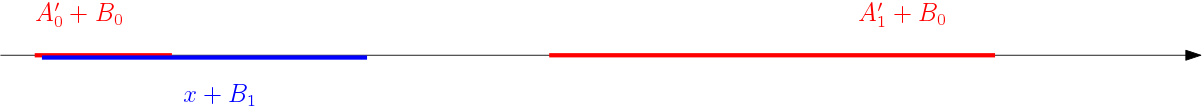}
\end{minipage}$$
$$ $$
Commençons par voir que $(x+B_0)\cap (A'+B)=\varnothing$. Supposons que ce ne soit pas le cas. Comme $(x+B_1)\cap (A'_0+B_0)\neq\varnothing$, la distance entre $B_0$ et $B_1$ est inférieure à la mesure de $A'_0+B_0$ car $A'_0+B_0$ est un intervalle.
$$ $$
$$\begin{minipage}[l]{17cm}
\includegraphics[height=2.5cm]{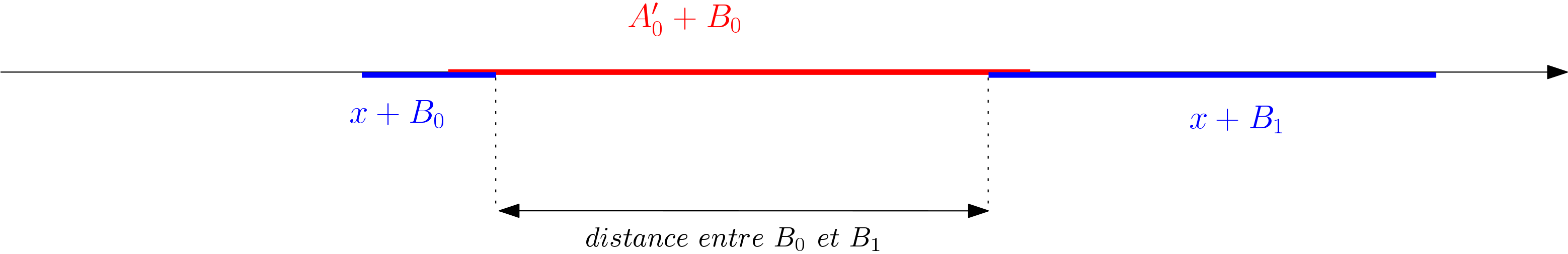}
\end{minipage}$$
$$ $$
Or $d\big( B_0 , B_1\big)\geqslant 1-b_+-b_- \geqslant 1-b-\varepsilon b,$ et $\lambda (A'_0+B_0)\leqslant\delta b+K\varepsilon b +Kb_+$. Donc 
$$ 1-b-\varepsilon b \leqslant \delta b+K\varepsilon b +Kb_+\leqslant (\delta+2K\varepsilon +K)b-Kb_-,$$
par \eqref{maj_b++b-}, donc comme $(K+\delta+\varepsilon)b<1-\varepsilon b$ par l'hypothèse \eqref{hyp_0}, on a en utilisant $\lambda(B_0)\leqslant \tau_{K}\varepsilon  b$,
$$\left( K(1-\tau_K \varepsilon)-1\right) b\leqslant Kb_--b\leqslant (2K-1)\varepsilon b, $$
ce qui contredit l'hypothèse \eqref{hyp_1} quel que soit $K\geqslant 2$. Ainsi on a bien $(x+B_0)\cap (A'+B)=\varnothing$. 
Donc d'après le lemme \ref{maj_erreur_AsansA'}, 
\begin{align*}
\tau_{K}\varepsilon  b & \geqslant \lambda\big( (x+B)\setminus (A'+B)\big) \geqslant \lambda\big( (x+B_0)\setminus (A'+B)\big)+\lambda\big( (x+B_1)\setminus (A'+B)\big) \\ & \geqslant \lambda(B_0)+\lambda\big( (x+B_1)\setminus (A'_0+B_0)\big) \geqslant \lambda(B_0)+\lambda(B_1)-\lambda \big((A'_0+B_0)\cap(x+B_1)\big) \\ & \geqslant b-\lambda (A'_0+B_0),
\end{align*}
or par construction de $A'$ puis par \eqref{maj_b+} puis parce qu'on est dans le cas \textbf{B}, on a
\begin{align*}
\lambda (A'_0+B_0) & \leqslant\delta b+K\varepsilon b+(K-1)b_++b_+ \leqslant \delta b+K\varepsilon b+K(\lambda(B_0)+\varepsilon b)\\ & \leqslant \delta b+\big(  K+K(\tau_K+1\big)\varepsilon b.
\end{align*}
Ainsi
$$\tau_{K}\varepsilon  b \geqslant (1-\delta) b-\big(  K+K(\tau_K+1\big)\varepsilon b,$$
ce qui contredit l'hypothèse \eqref{hyp_2}. Finalement, on doit donc avoir : \\
\\
\textbf{c)  $\boldsymbol{(x+B_1)\cap (A'_0+B_0)=\varnothing$ et $(x+B_1)\cap (A'_1+B_0)\neq\varnothing}$.}
$$ $$
$$\begin{minipage}[l]{17cm}
\includegraphics[height=1.3cm]{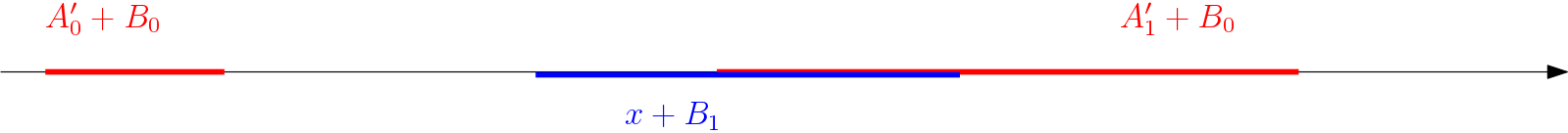}
\end{minipage}$$
$$ $$
Le lemme \ref{maj_erreur_AsansA'} assure que $ \lambda\big( (x+B)\setminus (A'+B)\big)\leqslant \tau_{K}\varepsilon  b$ mais comme $(x+B_1)\cap (A'_0+B_0)=\varnothing$ et $(x+B_1)\cap (A'_1+B_0)\neq\varnothing$, on a
$$ \lambda\big( (x+B)\setminus (A'+B)\big)=\lambda\big( (x+B_0)\setminus (A'_0+B_0)\big)+\lambda\big( (x+B_1)\setminus (A'_1+B_0)\big).$$
Ainsi
\begin{align*}
\tau_{K}\varepsilon b & \geqslant \lambda\big( (x+B_0)\setminus (A'_0+B_0)\big)+\lambda\big( (x+B_1)\setminus (A'_1+B_0)\big) \\ & \geqslant \lambda(B_0)-\lambda\big( (x+B_0)\cap (A'_0+B_0)\big) +\lambda(B_1)-\lambda\big( (x+B_1)\cap (A'_1+B_0)\big) \\ & \geqslant b-\lambda\big( \left[x,x+b_+\right] \cap\left[ \min A',+\infty\right[\big) -\lambda\big( \left[1+x-b_-,1+x\right] \cap\left[ 1+\min A'-b_-,+\infty\right[\big).
\end{align*}
Finalement, soit $\left[x,x+b_+\right] \cap\left[ \min A',+\infty\right[$ est non vide et
\begin{align*}
\tau_{K}\varepsilon  b & \geqslant b-\lambda\big( \left[\min A',x+b_+\right]\big)-\lambda\big( \left[1+\min A' -b_-,1+x\right]\big) \\ & \geqslant b-(x+b_+-\min A')-(1+x-(1+\min A'-b_-)) \\ & \geqslant 2(\min A'-x)-\varepsilon b \ \ \ \text{ d'après \eqref{maj_b++b-}},
\end{align*}
d'où
$$x\geqslant \min A'-\frac{1}{2}\big( \tau_{K}\varepsilon +\varepsilon\big) b.$$
Soit $\left[x,x+b_+\right] \cap\left[ \min A',+\infty\right[$ est vide et 
\begin{align*}
\tau_{K}\varepsilon  b & \geqslant b-\lambda\big( \left[1+\min A' -b_-,1+x\right]\big) \\ & \geqslant b-(1+x-(1+\min A'-b_-)) \\ & \geqslant \min A'-x+b-b_- \\ & \geqslant \min A'-x-\varepsilon b \ \ \ \text{ d'après \eqref{maj_b-}},
\end{align*}
d'où
$$x\geqslant \min A'-\big( \tau_{K}\varepsilon +\varepsilon\big) b.$$
Cette inégalité est donc valable dans tous les cas, ce qui termine la preuve.
\end{proof}
Traitons désormais le cas des éléments supérieurs à $\max A'$.
\begin{lemma}\label{AsansA'pastropaprès}
Soit $x\in A$ tel que $x>\max A'$, on a nécessairement
$$x\leqslant \max A'+(\tau_{K}+1)\varepsilon b,$$
où on rappelle que $\tau_K=(K(K+1)\log (K)-K\big(K(1+\log 4)-6+\log 4\big)+7$.
\end{lemma}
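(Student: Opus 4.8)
The plan is to mirror the proof of Lemme~\ref{AsansA'pas_trop_avant} under the reflection that exchanges $b_+$ with $b_-$, $B_0$ with $B_1$, and $\min A'$ with $\max A'$. So let $x\in A$ with $x>\max A'$. By Lemme~\ref{maj_erreur_AsansA'} and \eqref{hyp_1} one has $\lambda\big((x+B)\setminus(A'+B)\big)\leqslant\tau_K\varepsilon b<b$, hence $(x+B)\cap(A'+B)\neq\varnothing$; and since $x+B\subseteq\,]\max A',\,\max A'+1]$, the only pieces of $A'+B$ that $x+B$ can meet are the two topmost ones, $A'_{K-1}+B_0$ and $A'_{K-1}+B_1$, the relation $A'_{K-2}+B_1\subseteq A'_{K-1}+B_0$ (symmetric to $A'_0+B_1\subseteq A'_1+B_0$, and valid because $2b_+\leqslant 2(1+\varepsilon)b<1$ by \eqref{hyp_0} and $K\geqslant2$) absorbing all the lower pieces. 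Writing $x+B=(x+B_0)\sqcup(x+B_1)$, I would split on the size of $\lambda(B_1)$, exactly as the previous proof split on $\lambda(B_0)$.

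If $\lambda(B_1)>\tau_K\varepsilon b$, then $\lambda\big((x+B_1)\cap(A'+B)\big)\geqslant\lambda(B_1)-\tau_K\varepsilon b$; since $x+B_1\subseteq[\,x+1-b_-,\,x+1\,]$ while $A'+B\subseteq\,]{-\infty},\,\max A'+1]$, this forces $\max A'+b_--x\geqslant\lambda(B_1)-\tau_K\varepsilon b$, and with $b_-\leqslant\lambda(B_1)+\varepsilon b$ from \eqref{maj_b-} one reads off $x\leqslant\max A'+(\tau_K+1)\varepsilon b$. If $\lambda(B_1)\leqslant\tau_K\varepsilon b$, then $\lambda(B_0)\geqslant b-\tau_K\varepsilon b>\tau_K\varepsilon b$ by \eqref{eps<1/2tauK}, so $(x+B_0)\cap(A'+B)\neq\varnothing$, and one distinguishes three sub-cases according to whether $x+B_0$ meets $A'_{K-1}+B_1$, $A'_{K-1}+B_0$, or both, just as in Lemme~\ref{AsansA'pas_trop_avant}.

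The sub-cases "both" and "$A'_{K-1}+B_1$ only" must be ruled out: in this regime $b_-\leqslant\lambda(B_1)+\varepsilon b\leqslant(\tau_K+1)\varepsilon b$ is small, so the gap between the two top pieces of $A'+B$ has length $\geqslant b_+-\varepsilon b$ by the computation symmetric to the one in Lemme~\ref{AsansA'pas_trop_avant} (unfolding the definition of $A'$ and using \eqref{maj_b++b-}, \eqref{maj_b-}, \eqref{hyp_0} and $K\geqslant2$); since $b_+\geqslant\lambda(B_0)\geqslant b-\tau_K\varepsilon b$ is then large, both configurations would push $\lambda\big((x+B)\setminus(A'+B)\big)$ above $\tau_K\varepsilon b$, contradicting Lemme~\ref{maj_erreur_AsansA'} through \eqref{hyp_1} and \eqref{hyp_2}. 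In the surviving sub-case "$x+B_0$ meets $A'_{K-1}+B_0$ only", one bounds $\lambda\big((x+B_0)\setminus(A'_{K-1}+B_0)\big)+\lambda\big((x+B_1)\setminus(A'_{K-1}+B_1)\big)$ from below via \eqref{EinterF} and \eqref{maj_b++b-}, then reads off $x\leqslant\max A'+(\tau_K+1)\varepsilon b$, keeping the weaker of the two estimates produced, exactly as in the final displays of the previous proof.

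The one genuinely non-routine point is re-deriving this "large gap" estimate in the reflected direction; everything else is the symmetric transcription of Lemme~\ref{AsansA'pas_trop_avant}. An alternative that bypasses it altogether is to observe that the whole configuration — standing hypotheses, $K,\delta,\varepsilon$, $D_B=1$, and the set $A'$ itself, which is built from $\tilde A_K$, $I_K$, $\Omega(\tilde A_K)$ and $b_\pm$ — is equivariant, up to an irrelevant translation, under the map sending $A$ to $(\min A'+\max A')-A$ and $B$ to $1-B$; this map swaps $b_+\leftrightarrow b_-$ and $\{x<\min A'\}\leftrightarrow\{x>\max A'\}$, so Lemme~\ref{AsansA'pas_trop_avant} applied to the reflected data yields the present statement verbatim. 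The cost of that shortcut is that one must first check this equivariance of the construction of $A'$, which is the main thing a careful reader would want verified.
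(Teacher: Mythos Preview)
Your proposal is correct. The paper's own proof is precisely the reflection shortcut you describe at the end: it applies the map $\chi:E\mapsto\sup E-E$ to both $A$ and $B$, notes that $\chi(A')=\chi(A)'$ (i.e.\ the construction of $A'$ is equivariant under this reflection, swapping $b_+\leftrightarrow b_-$ and $B_0\leftrightarrow B_1$), and then invokes Lemme~\ref{AsansA'pas_trop_avant} on the reflected data. So your ``alternative that bypasses it altogether'' is in fact the paper's entire argument, and your longer case-by-case mirroring, while valid, is not needed.
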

\begin{proof}
La preuve est immédiate en utilisant la transformation $\chi$ définie par
\begin{align*}
\chi \ : \ & \overline{\mathcal{P}}\left(\mathbb{R}\right) \longrightarrow \overline{\mathcal{P}}\left(\mathbb{R}\right) \\ & E \longmapsto \sup E-E
\end{align*}
et le lemme \ref{AsansA'pas_trop_avant}. Il faut simplement veiller à ce que $\chi(A')=\chi(A)'$, ce qui est bien le cas\footnote{Pour plus de détail voir https://hal.univ-lorraine.fr/tel-03368154v1 où l'argument y est développé.}.
\end{proof}

\subsubsection{Le cas des éléments $x$ de $A\setminus A'$ dans $\left]\max A'_k,\min A'_{k+1}\right[$ pour $k\in\left\lbrace 0,...,K-1 \right\rbrace$}

\begin{lemma}\label{A_dans_ecartsdumilieu}
Si $k\in\left\lbrace 0,...,K-1 \right\rbrace$ et si $x$ est un élément de $A$ tel que $x\in \left]\max A'_k,\min A'_{k+1}\right[$, alors
$$d\left( x,A'\right)\leqslant (\tau_{K}+1)\varepsilon b,$$
où on rappelle que $\tau_{K}= (K(K+1)\log (K)-K\big(K(1+\log 4)-6+\log 4\big)+7.$
\end{lemma}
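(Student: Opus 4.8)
Le plan est de raisonner par l'absurde, en transposant la stratégie du lemme \ref{AsansA'pas_trop_avant} au cas d'un trou \emph{intérieur} de $A'$. On peut supposer $x\notin A'$ (sinon $d(x,A')=0$) et $0\leqslant k\leqslant K-2$, le cas $k=K-1$ — où le « trou » est $]\max A',+\infty[$ — relevant directement du lemme \ref{AsansA'pastropaprès}. Supposons donc, par l'absurde, que $x-\max A'_k>(\tau_K+1)\varepsilon b$ et $\min A'_{k+1}-x>(\tau_K+1)\varepsilon b$.

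D'abord, le lemme \ref{maj_erreur_AsansA'} appliqué à $x\in A\setminus A'$ donne $\lambda\big((x+B)\setminus(A'+B)\big)\leqslant\tau_K\varepsilon b$. On décompose $x+B=(x+B_0)\sqcup(x+B_1)$, avec $x+B_0\subseteq[x,x+b_+]$ (« à l'étage $k$ ») et $x+B_1\subseteq[x+1-b_-,x+1]$ (« à l'étage $k+1$ »). Grâce aux identités de bord $\operatorname{conv}(A'_j+[1-b_-,1])=\operatorname{conv}(A'_{j+1}+[0,b_+])$ et aux inclusions $A'_j+B_1\subseteq A'_{j+1}+B_0$ (obtenues comme $A'_0+B_1\subseteq A'_1+B_0$ dans la preuve du lemme \ref{AsansA'pas_trop_avant}), on voit que dans la plage parcourue par $x+B_0$ l'ensemble $A'+B$ se réduit à $A'_k+B_0$ (qui finit en $\max A'_k+b_+$) et $A'_{k+1}+B_0$ (qui débute en $\min A'_{k+1}$), et que dans la plage de $x+B_1$ il se réduit à $A'_{k+1}+B_0$ et $A'_{k+2}+B_0$ ($=\varnothing$ si $k=K-2$). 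Enfin, un calcul analogue à celui fait sur $\lambda\big([\max A'_0+b_+,\min A'_1]\big)$ dans le lemme \ref{AsansA'pas_trop_avant}, utilisant \eqref{maj_b++b-}, \eqref{hyp_0} et la définition de $\tau_K$, montre que le trou de $A'+B$ séparant deux morceaux consécutifs $A'_j+B_0$ et $A'_{j+1}+B_0$ est de longueur $>(\tau_K+1)\varepsilon b$.

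On conclut alors en calquant la discussion du lemme \ref{AsansA'pas_trop_avant}. Si $\lambda(B_0)>\tau_K\varepsilon b$ : comme $[x,x+b_+]$ est de longueur $b_+$ et $\max A'_k<x<\min A'_{k+1}$, l'ensemble $(x+B_0)\cap(A'+B)\subseteq[x,\max A'_k+b_+]\cup[\min A'_{k+1},x+b_+]$ est de mesure $<b_+-(\tau_K+1)\varepsilon b$ (si le trou est plus large que $b_+$, un seul de ces deux intervalles est non vide ; sinon leur mesure totale vaut $b_+$ moins la longueur du trou), d'où, via \eqref{maj_b+}, $\lambda\big((x+B_0)\setminus(A'+B)\big)>\tau_K\varepsilon b$ : contradiction avec le lemme \ref{maj_erreur_AsansA'}. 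Si $\lambda(B_0)\leqslant\tau_K\varepsilon b$, alors $\lambda(B_1)\geqslant b-\tau_K\varepsilon b>\tau_K\varepsilon b$ par \eqref{eps<1/2tauK}, donc $x+B_1$ rencontre $A'+B$ et l'on distingue, exactement comme dans les cas a), b), c) du lemme \ref{AsansA'pas_trop_avant} : (a) $x+B_1$ rencontre à la fois $A'_{k+1}+B_0$ et $A'_{k+2}+B_0$ (ce qui suppose $k\leqslant K-3$) — impossible, car lorsque $\lambda(B_0)$ est petit le trou entre ces deux morceaux est plus long que $\lambda(B_1)-\tau_K\varepsilon b$ (même calcul que le cas a), avec $b_-$ au lieu de $b_+$, et en utilisant $k+3\leqslant K$), d'où $\lambda\big((x+B_1)\setminus(A'+B)\big)>\tau_K\varepsilon b$ ; (b) $x+B_1$ ne rencontre que $A'_{k+1}+B_0$ et $x+B_0$ manque $A'+B$ — alors $\lambda\big((x+B)\setminus(A'+B)\big)\geqslant\lambda(B_0)+\lambda(B_1)-\lambda\big(A'_{k+1}+B_0\big)$, et majorer $\lambda\big(A'_{k+1}+B_0\big)$ par \eqref{maj_b+} et la petitesse de $\lambda(B_0)$ contredit \eqref{hyp_2}, comme au cas b) ; (c) $x+B_1$ ne rencontre que $A'_{k+2}+B_0$ (donc $x+B_0$, entièrement à droite de $A'_k+B_0$ puisque $b_+$ est petit, ne peut rencontrer que $A'_{k+1}+B_0$) — alors $\lambda\big((x+B)\setminus(A'+B)\big)=\lambda\big((x+B_0)\setminus(A'_{k+1}+B_0)\big)+\lambda\big((x+B_1)\setminus(A'_{k+2}+B_0)\big)$, et le calcul d'extrémités du lemme \ref{AsansA'pas_trop_avant} donne $\min A'_{k+1}-x\leqslant(\tau_K+1)\varepsilon b$, donc $x-\max A'_k\leqslant\min A'_{k+1}-x\leqslant(\tau_K+1)\varepsilon b$ : absurde. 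La difficulté principale est donc purement comptable : il faut suivre, selon la position de $x$ dans le trou, lesquels des morceaux $A'_k+B_0$, $A'_{k+1}+B_0$, $A'_{k+2}+B_0$ chacun de $x+B_0$ et $x+B_1$ peut atteindre, et contrôler que dans chaque configuration la perte $\lambda\big((x+B)\setminus(A'+B)\big)$ excède $\tau_K\varepsilon b$ ; les constantes ont été calibrées via $\tau_K$ et les hypothèses \eqref{hyp_1}--\eqref{hyp_4} pour que ces inégalités passent tout juste.
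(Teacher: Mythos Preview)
Your plan to transpose the case analysis of Lemma~\ref{AsansA'pas_trop_avant} has a genuine gap in your case~(b). In Lemma~\ref{AsansA'pas_trop_avant}, case~b) was shown to be \emph{impossible} by bounding $\lambda\big((x+B)\setminus(A'+B)\big)\geqslant b-\lambda(A'_0+B_0)$ and then using that $A'_0+B_0$ is small when $\lambda(B_0)$ is small: indeed $\lambda(A'_0+B_0)\leqslant(\delta+\varepsilon)b+Kb_+$, and this contradicts~\eqref{hyp_2}. The point is that $A'_0$ has \emph{no} $b_-$ contribution. But your transposed case~(b) uses $A'_{k+1}+B_0$ in place of $A'_0+B_0$, and for $k\geqslant 0$ one has
\[
\lambda(A'_{k+1}+B_0)=(\delta+\varepsilon)b+(K-k-1)b_++(k+1)b_-.
\]
In the regime $\lambda(B_0)\leqslant\tau_K\varepsilon b$ we have $b_-\geqslant\lambda(B_1)\geqslant b-\tau_K\varepsilon b$, so $(k+1)b_-$ alone is already $\geqslant b-\tau_K\varepsilon b$, and $b-\lambda(A'_{k+1}+B_0)$ is \emph{negative}: your lower bound is vacuous and no contradiction with~\eqref{hyp_2} follows. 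Your case~(b) therefore does not rule anything out, and the argument collapses. (A smaller slip: in your case~(c) the clause ``donc $x-\max A'_k\leqslant\min A'_{k+1}-x$'' is unjustified and unnecessary --- the inequality $\min A'_{k+1}-x\leqslant(\tau_K+1)\varepsilon b$ is already the contradiction.)

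The paper avoids this difficulty by a symmetry trick that makes case~B disappear entirely: applying the involution $\chi$ if needed, one assumes $\lambda(B_0)\geqslant b/2$ from the start. Then only $x+B_0$ is analyzed, and one bounds $\lambda\big((x+B_0)\cap\,]\max A'_k+b_+,\min A'_{k+1}[\big)$ from below by a direct four-case discussion according to whether $x+B_0$ protrudes on the left, on the right, on both sides, or on neither side of this gap; in every case the measure exceeds $\tau_K\varepsilon b$, contradicting Lemma~\ref{maj_erreur_AsansA'}. This is both shorter and sidesteps the asymmetry between $A'_0$ and the interior pieces $A'_{k+1}$ that breaks your case~(b).
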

\begin{proof} 
Soit $k\in\left\lbrace 0,...,K-1 \right\rbrace$ et $x\in A\cap\left]\max A'_k,\min A'_{k+1}\right[$. On peut supposer que $\lambda(B_0)\geqslant b/2$. Sinon cela signifie que $\lambda(B_1)\geqslant b/2$ et donc quitte à appliquer $\chi$, on se ramène à $\chi(B_0)\geqslant b/2$. Nous allons raisonner par l'absurde, supposons que $d\left( x, A'\right)>(\tau_{K}+1)\varepsilon b$, ce qui implique que 
$$\lambda\big(\left]\max A'_k,\min A'_{k+1}\right[\big)>2(\tau_{K}+1)\varepsilon b,$$ et donc 
$$x\in \left]\max A'_k+(\tau_{K}+1)\varepsilon b,\min A'_{k+1}-(\tau_{K}+1)\varepsilon b\right[ .$$
On a 
\begin{equation}\label{avantlesquatrecas}
\lambda\big(\left( x+B_0\right)\setminus (A'+B)\big) \geqslant \lambda\Big(\left( x+B_0\right)\cap \left]\max A'_k+b_+,\min A'_{k+1}\right[\Big).
\end{equation}
Or on rappelle que le lemme \ref{maj_erreur_AsansA'} donne la majoration
$$\lambda\Big( \big((A\setminus A')+B\big)\setminus \big( (A\cap A')+B\big)\Big)\leqslant \tau_{K}\varepsilon b.$$
Nous allons voir que ces deux dernières inégalités sont en contradiction. Pour cela nous allons distinguer les quatre configurations possibles : 
\begin{itemize}
\item $\left( x+B_0\right)\cap \left]-\infty,\max A'_k+b_+\right[=\varnothing$ et $\left( x+B_0\right)\cap \left]\min A'_{k+1},+\infty\right[=\varnothing$
\item $\left( x+B_0\right)\cap \left]-\infty,\max A'_k+b_+\right[\neq\varnothing$ et $\left( x+B_0\right)\cap \left]\min A'_{k+1},+\infty\right[\neq\varnothing$
\item $\left( x+B_0\right)\cap \left]-\infty,\max A'_k+b_+\right[\neq\varnothing$ et $\left( x+B_0\right)\cap \left]\min A'_{k+1},+\infty\right[=\varnothing$
\item $\left( x+B_0\right)\cap \left]-\infty,\max A'_k+b_+\right[=\varnothing$ et $\left( x+B_0\right)\cap \left]\min A'_{k+1},+\infty\right[\neq\varnothing$
\end{itemize}
et nous allons voir que quelle que soit la configuration dans laquelle on se trouve, $\lambda\big(\left( x+B_0\right)\setminus (A'+B)\big)$ est supérieur à $\tau_{K}\varepsilon b$ ce qui nous conduira donc à une absurdité. \\
\\
\textbf{\underline{Cas 1} : $\boldsymbol{\left( x+B_0\right)\cap \left]-\infty,\max A'_k+b_+\right[=\varnothing$ et $\left( x+B_0\right)\cap \left]\min A'_{k+1},+\infty\right[=\varnothing}$} \\
\\
$$\begin{minipage}[l]{17cm}
\includegraphics[height=1.9cm]{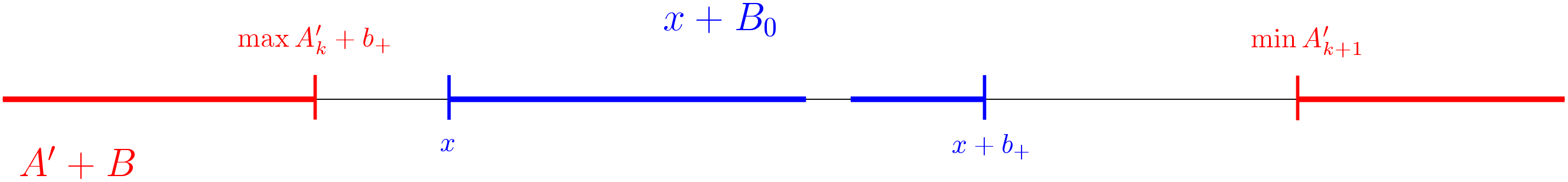}
\end{minipage}$$
\\
Dans ce cas on a alors $x+B_0\subseteq\left]\max A'_k+b_+,\min A'_{k+1}\right[$, et donc
$$\lambda\Big(\left( x+B_0\right)\cap \left]\max A'_k+b_+,\min A'_{k+1}\right[\Big)\geqslant \lambda(B_0)\geqslant\frac{b}{2} .$$
Or $\frac{b}{2}> \tau_{K}\varepsilon b$, car $\varepsilon<\left(\frac{\delta}{3K}\right)^3$ par l'hypothèse \eqref{hyp_1} et $\left(\frac{\delta}{3K}\right)^3\leqslant\frac{1}{2\tau_K}$ quel que soit $K\geqslant 2$. Ainsi finalement
$$\lambda\Big(\left( x+B_0\right)\cap \left]\max A'_k+b_+,\min A'_{k+1}\right[\Big)> \tau_{K}\varepsilon  b.$$
\textbf{\underline{Cas 2} : $\boldsymbol{\left( x+B_0\right)\cap \left]-\infty,\max A'_k+b_+\right[\neq\varnothing$ et $\left( x+B_0\right)\cap \left]\min A'_{k+1},+\infty\right[\neq\varnothing}$} \\
\\
$$\begin{minipage}[l]{17cm}
\includegraphics[height=1.6cm]{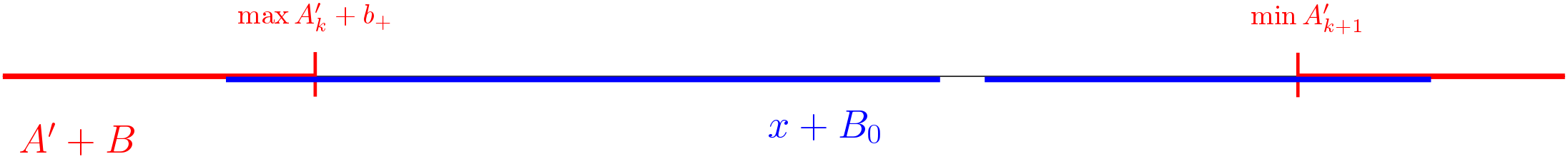}
\end{minipage}$$
\\
On rappelle ici que $B_0\subseteq\left[0,b_+\right]$ et $b_+\leqslant b+\varepsilon b$ (cf. \eqref{maj_b+}), donc
\begin{align*}
\lambda\Big(\left( x+B_0\right)\cap \left]\max A'_k+b_+,\min A'_{k+1}\right[\Big) & \geqslant \lambda\Big(\left[ x,x+b_+\right]\cap \left]\max A'_k+b_+,\min A'_{k+1}\right[\Big) \\ & \ \ \ - \lambda\Big(\left[ x,x+b_+\right]\setminus\left( x+B_0\right)\Big)\\ & \geqslant \lambda\Big( \left]\max A'_k+b_+,\min A'_{k+1}\right[\Big)-\varepsilon b,
\end{align*}
or
\begin{align*}
\lambda\Big( \left]\max A'_0+b_+,\min A'_{1}\right[\Big) & =1-(k+1)b_--(K-k)b_+-\delta b-\varepsilon b \\ & \geqslant 1-(b+\varepsilon b)-kb_-(K-k-1)b_+-\delta b-\varepsilon b \ \ \text{ par \eqref{maj_b++b-}} \\ & \geqslant 1-K(b+\varepsilon b)-\delta b-\varepsilon b \ \ \text{ par \eqref{maj_b+} et \eqref{maj_b-}} \\ & \geqslant 1-\big(K+\delta+(\tau_K+K+1)\varepsilon \big) b+\tau_{K}\varepsilon  b \\ & > \tau_{K}\varepsilon  b,
\end{align*}
où la dernière ligne provient de l'hypothèse \eqref{hyp_4}. Ainsi finalement
$$\lambda\Big(\left( x+B_0\right)\cap \left]\max A'_k+b_+,\min A'_{k+1}\right[\Big)> \tau_{K}\varepsilon  b.$$
\textbf{\underline{Cas 3} : $\boldsymbol{\left( x+B_0\right)\cap \left]-\infty,\max A'_k+b_+\right[\neq\varnothing$ et $\left( x+B_0\right)\cap \left]\min A'_{k+1},+\infty\right[=\varnothing}$} \\
\\
$$\begin{minipage}[l]{17cm}
\includegraphics[height=1.6cm]{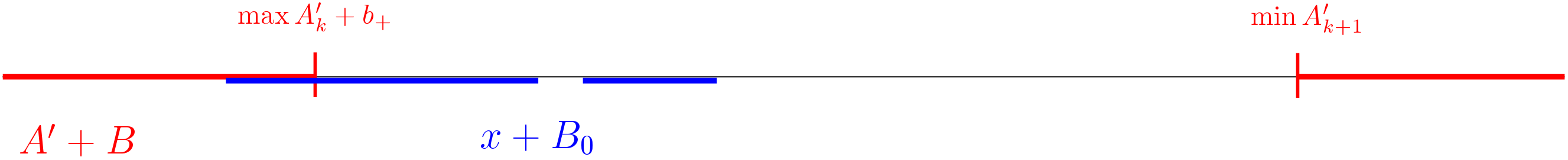}
\end{minipage}$$
\\
On a 
\begin{align*}
 & \lambda\Big(\left( x+B_0\right)\cap \left]\max A'_k+b_+,\min A'_{k+1}\right[\Big) \\ & \geqslant \lambda\Big( \big(x+\left[ 0,b_+\right]\big)\cap \left]\max A'_k+b_+,\min A'_{k+1}\right[\Big)  - \lambda\Big( \big( x+\left[ 0,b_+\right]\big)\setminus (x+B_0)\Big) \\ & \geqslant \lambda\Big( \left]\max A'_k+b_+,x+b_+\right]\Big) - \Big(\lambda\big( \left[0,b_+\right]\big)-\lambda(B_0)\Big),
\end{align*}
et comme $x\in \left]\max A'_k+(\tau_{K}+1)\varepsilon b,\min A'_{k+1}-(\tau_{K}+1)\varepsilon b\right[$, $B_0\subseteq\left[0,b_+\right]$ et $b_+\leqslant b+\varepsilon b$, on a
$$ \lambda\Big(\left( x+B_0\right)\cap \left]\max A'_k+b_+,\min A'_{k+1}\right[\Big)> (\tau_K+1)\varepsilon b-\varepsilon b,$$
c'est à dire
$$\lambda\Big(\left( x+B_0\right)\cap \left]\max A'_k+b_+,\min A'_{k+1}\right[\Big)> \tau_{K}\varepsilon  b.$$
\textbf{\underline{Cas 4} : $\boldsymbol{\left( x+B_0\right)\cap \left]-\infty,\max A'_k+b_+\right[=\varnothing$ et $\left( x+B_0\right)\cap \left]\min A'_{k+1},+\infty\right[\neq\varnothing}$} \\
\\
$$\begin{minipage}[l]{17cm}
\includegraphics[height=1.6cm]{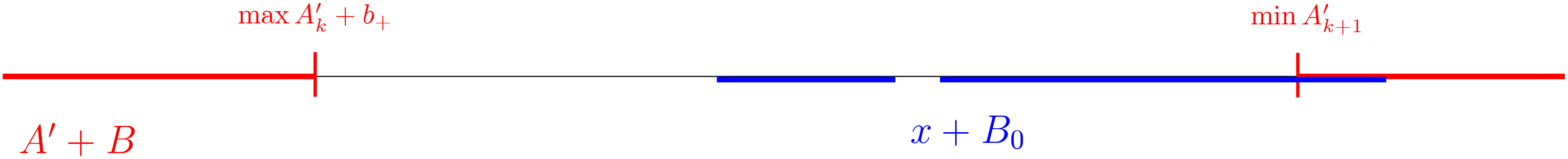}
\end{minipage}$$
\\
Ce cas est similaire au précédent et on a
$$ \lambda\Big(\left( x+B_0\right)\cap \left]\max A'_k+b_+,\min A'_{k+1}\right[\Big)> \tau_K\varepsilon b.$$

Ainsi quel que soit le cas dans lequel on se trouve, on obtient toujours une contradiction du lemme \ref{maj_erreur_AsansA'} et donc l'absurdité. Ainsi $d\left( x, A'\right)\leqslant (\tau_K+1)\varepsilon b$ ce qui termine la preuve de ce lemme.
\end{proof}
Finalement les lemmes \ref{AsansA'pas_trop_avant}, \ref{AsansA'pastropaprès} et \ref{A_dans_ecartsdumilieu} nous amènent à la conclusion suivante 
$$A\subseteq A'+\left[-(\tau_{K}+1)\varepsilon b,(\tau_{K}+1)\varepsilon b\right], $$
où $\tau_{K}= (K(K+1)\log (K)-K\big(K(1+\log 4)-6+\log 4\big)+7 $ et
$$ A'= a'+\bigsqcup\limits_{\substack{k=0}}^{K-1} \left[ k-kb_-, k+(\delta+\varepsilon) b +(K-k-1)b_+ \right] .$$

\subsubsection{Affinement}

Pour tout $i\in\left\lbrace 1,...,K+1 \right\rbrace$, on pose $S_i=(A+B)\cap\big( a'-Kb_-+\left[i-1,i\right[\big)$ et pour tout $i\in\left\lbrace 1,...,K \right\rbrace$, on pose 
$$A_i=A\cap\Big( a'\big[ i-ib_--(\tau_{K}+1)\varepsilon b,i+(K-i-1)b_+ +\delta b+(\tau_{K}+2)\varepsilon b\big]\Big)$$  
et 
$$f_i(\varepsilon)b=\lambda(A_i)-(K-i)\lambda(B_0)-(i-1)\lambda(B_1)-\delta b.$$
On a donc
\begin{align*}
\lambda(A) & =\sum\limits_{\substack{i=1}}^{K}\lambda(A_i)=\sum\limits_{\substack{i=1}}^{K}\big( (K-i)\lambda(B_0)+(i-1)\lambda(B_1)+\delta b+f_i(\varepsilon)b\big) \\ & =\dfrac{K(K-1)}{2}b+K\delta b+\sum\limits_{\substack{i=1}}^{K}f_i(\varepsilon)b = \lambda(A)+\sum\limits_{\substack{i=1}}^{K}f_i(\varepsilon)b,
\end{align*}
et donc $\sum\limits_{\substack{i=1}}^{K}f_i(\varepsilon)=0$. Ainsi il existe $i_0\in\left\lbrace 1,...,K \right\rbrace$ tel que $f_{i_0}(\varepsilon)\geqslant 0.$
\begin{lemma}\label{bonne_taille_des_morceaux}
Quel que soit $k\in\left\lbrace 1,...,K \right\rbrace$, $A_k$ est inclus dans un intervalle de mesure $(K-k)\lambda(B_0)+(k-1)\lambda(B_1)+\delta b+\varepsilon b$.
\end{lemma}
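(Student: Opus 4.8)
Le plan est de combiner la structure déjà établie avec un argument de comptage sur les étages de $S=A+B$. On dispose de $A=\bigsqcup_{k=1}^K A_k$, chaque $A_k$ étant non vide (l'ensemble $\dot{A_K}=\left\llbracket a,a+K-1\right\rrbracket+\tilde{A}_K$ rencontrant les $K$ étages) et contenu dans l'étage correspondant de $A'$ à $(\tau_K+1)\varepsilon b$ près ; l'étage $S_j$ de $S$ ne reçoit alors de contributions que de $A_j+B_0$ et $A_{j-1}+B_1$, c'est-à-dire $S_j=(A_{j-1}+B_1)\cup(A_j+B_0)$ avec la convention $A_0=A_{K+1}=\varnothing$. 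Comme $\lambda(S_j)\geqslant\lambda(A_j+B_0)\geqslant\lambda(A_j)+\lambda(B_0)$ et $\lambda(S_j)\geqslant\lambda(A_{j-1}+B_1)\geqslant\lambda(A_{j-1})+\lambda(B_1)$, en coupant la somme $\sum_{j=1}^{K+1}\lambda(S_j)=\lambda(S)=\lambda(A)+(K+\delta+\varepsilon)b$ en un bloc $j\leqslant k$ (où l'on utilise la première minoration) et un bloc $j\geqslant k+1$ (où l'on utilise la seconde), et en remarquant que $A_k$ y est compté exactement une fois dans chacun, on obtient
$$\lambda(A)+(K+\delta+\varepsilon)b\geqslant\lambda(A)+\lambda(A_k)+k\lambda(B_0)+(K-k+1)\lambda(B_1),$$
d'où, via $\lambda(B_0)+\lambda(B_1)=b$, la majoration $\lambda(A_k)\leqslant(K-k)\lambda(B_0)+(k-1)\lambda(B_1)+\delta b+\varepsilon b$, c'est-à-dire $f_k(\varepsilon)\leqslant\varepsilon$ pour tout $k$ ; jointe à $\sum_{i=1}^K f_i(\varepsilon)=0$ elle force aussi $f_k(\varepsilon)\geqslant-(K-1)\varepsilon$.

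Il reste à passer de cette borne sur la \emph{mesure} de $A_k$ à un intervalle le contenant, autrement dit à montrer que $A_k$ ne laisse qu'un trou de mesure au plus $(\varepsilon-f_k(\varepsilon))b$ dans son enveloppe convexe $\widehat{A_k}=[\alpha_k,\beta_k]$ (ses bornes sont dans $A$). L'idée est d'exploiter plus finement l'identité $S_j=(A_{j-1}+B_1)\cup(A_j+B_0)$ : comme $0,b_+\in B_0$ et $1-b_-,1\in B_1$, on a $A_j+B_0\supseteq A_j\cup(A_j+b_+)$, $A_{j-1}+B_1\supseteq(A_{j-1}+1-b_-)\cup(A_{j-1}+1)$, tandis que $A_j+B_0\subseteq[\alpha_j,\beta_j+b_+]$ et $A_{j-1}+B_1\subseteq[\alpha_{j-1}+1-b_-,\beta_{j-1}+1]$ ; en reportant ces bornes dans l'égalité $\sum_j\lambda(S_j)=\lambda(A)+(K+\delta+\varepsilon)b$, qui est minimale à $\varepsilon b$ près d'après la première étape, on montre que dans chaque étage les deux contributions $A_{j-1}+B_1$ et $A_j+B_0$ doivent remplir presque entièrement leurs enveloppes convexes et s'aligner presque parfaitement, ce qui donne quantitativement $\beta_k-\alpha_k\leqslant\lambda(A_k)+(\varepsilon-f_k(\varepsilon))b$, donc le lemme. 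Quitte à appliquer la réflexion $\chi:E\mapsto\sup E-E$, qui échange $B_0$ et $B_1$ et renverse l'ordre des étages, on pourra de plus supposer $\lambda(B_0)\geqslant b/2$.

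La première étape est élémentaire ; l'obstacle principal est la seconde : transformer l'égalité scalaire $\sum_j\lambda(S_j)=\lambda(A)+(K+\delta+\varepsilon)b$ en un contrôle \emph{étage par étage} de l'alignement de $A_{j-1}+B_1$ et $A_j+B_0$ et des trous de chaque $A_j$. La difficulté vient de ce que ces deux ensembles se recouvrent largement dans $S_j$ — ils coïncident dans le cas d'égalité — de sorte que l'estimation naïve consistant à additionner leurs mesures est trop lâche ; il faudra raisonner via les extrémités $\alpha_j,\beta_j$ des $A_j$ et leur propagation d'un étage au suivant, en distinguant selon la taille de $B_0$ (ou $B_1$), comme dans les analyses de cas des sections précédentes.
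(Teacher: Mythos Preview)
Your first step — deriving $f_k(\varepsilon)\leqslant\varepsilon$ (and hence $f_k(\varepsilon)\geqslant -(K-1)\varepsilon$) by splitting $\sum_j\lambda(S_j)$ at level $k$ — is exactly what the paper does, and is correct.

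The gap is in your second step. You need to pass from the bound on $\lambda(A_k)$ to a bound on $\operatorname{diam}(A_k)$, and the mechanism you propose does not supply it. Knowing only that $0,b_+\in B_0$ and $1-b_-,1\in B_1$ gives inclusions like $A_j\cup(A_j+b_+)\subseteq A_j+B_0$, but $\lambda\big(A_j\cup(A_j+b_+)\big)$ is governed by overlaps of $A_j$ with its translate, not by $\operatorname{diam}(A_j)$; a set $A_k$ with a large hole can still satisfy $\lambda(A_k+B_0)\leqslant\lambda(A_k)+\lambda(B_0)+K\varepsilon b$ if $B_0$ itself is porous. Your ``alignment/filling of convex hulls'' heuristic is precisely the content of the later refinement (the section \emph{Affinement}, with the quantities $g_k$, $\varepsilon^4_i$, $\varepsilon^<_i$, $\varepsilon^>_i$), but there that argument \emph{uses} the present lemma as input: the key inequality $\lambda(A_k+B_0)\geqslant\lambda(A_k)+\lambda(B_0)+g_k(\varepsilon)b$ of Lemma~\ref{min_Ak+B0} is obtained via Ruzsa's dichotomy (Lemma~\ref{lemmeRuzsa2}) only after one knows $g_k(\varepsilon)\leqslant K\varepsilon$, which comes from Lemma~\ref{bonne_taille_des_morceaux}. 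So your plan is circular as stated.

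The paper breaks the circle by applying Theorem~\ref{Anne_Pablo} directly to the pair $(A_k,B_0)$ (or $(A_k,B_1)$, whichever has measure $>K\varepsilon b$): from $\lambda(A_k+B_0)\leqslant\lambda(A_k)+\lambda(B_0)+(\varepsilon-f_k(\varepsilon))b$ with $(\varepsilon-f_k(\varepsilon))b\leqslant K\varepsilon b\leqslant\rho_0$, one gets $n_kA_k$ inside an interval $I'_k$ of the right size, and $n_k=1$ because $B_0\subseteq[0,b_+]$ is already contained in a short interval (same argument as in Lemma~\ref{tildes-presque-intervalles}). That structural input is what your endpoint-propagation sketch is missing. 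If you want to avoid Theorem~\ref{Anne_Pablo}, you would need to replace it by something of comparable strength — for instance, feeding the upper bound $\lambda(A_k+B_0)\leqslant\lambda(A_k)+\lambda(B_0)+K\varepsilon b$ into Ruzsa's Lemma~\ref{lemmeRuzsa2} and arguing that the second alternative forces $\lambda(B_0)\leqslant K\varepsilon b$ (contradiction) while the first gives $\operatorname{diam}(A_k)\leqslant\lambda(A_k)+K\varepsilon b$; but this is a different argument from the one you outlined, and the case $k=K$ (where one cannot assume $\operatorname{diam}(A_K)>b_+$) still needs separate care.
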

\begin{proof}
Soit $k\in\left\lbrace 1,...,K \right\rbrace$. Commençons par donner une majoration de la mesure de $A_k$. Par l'hypothèse \eqref{hyp_4}, pour tout $i\in\left\lbrace 1,...,K \right\rbrace$, on a $A_i+B_0\subseteq S_i$ et $A_i+B_1\subseteq S_{i+1}$. De plus, quel que soit $i\in\left\lbrace 1,...,K \right\rbrace$, $a'+i+\tilde{A}_K\subseteq A_i$ et donc en particulier $A_i\neq\varnothing$. Ainsi
\begin{align*}
\lambda(A+B) & =\sum\limits_{\substack{i=1}}^{K+1}\lambda(S_i)\geqslant\sum\limits_{\substack{i=1}}^{k}\lambda(A_i+B_0)+\sum\limits_{\substack{i=k}}^{K}\lambda(A_i+B_1) \\ & \geqslant \sum\limits_{\substack{i=1}}^{k}\lambda(A_i)+k\lambda(B_0)+\sum\limits_{\substack{i=k}}^{K}\lambda(A_i)+(K-k+1)\lambda(B_1) \\ & \geqslant \lambda(A)+k\lambda(B_0)+(K-k+1)\lambda(B_1)+\lambda(A_k),
\end{align*}
or $\lambda(A+B)=\lambda(A)+(K+\delta+\varepsilon)b$ par hypothèse, donc
$$f_k(\varepsilon)b=\lambda(A_k)-(K-k)\lambda(B_0)-(k-1)\lambda(B_1)-\delta b\leqslant \varepsilon b .$$
De plus, comme $\sum\limits_{\substack{i=1}}^{K}f_i(\varepsilon)=0$, on a $f_k(\varepsilon)=-\sum\limits_{\substack{i\neq k}}f_i(\varepsilon) \geqslant -(K-1)\varepsilon $.
Ainsi, par l'hypothèse \eqref{hyp_3}, on a
\begin{equation}\label{onpeututiliserAnnePablo}
0\leqslant\varepsilon b-f_k(\varepsilon)b\leqslant K\varepsilon b\leqslant \rho_0.
\end{equation}
D'autre part, on a
\begin{align*}
\lambda(A+B) & =\sum\limits_{\substack{i=1}}^{K+1}\lambda(S_i)\geqslant\sum\limits_{\substack{i=1}}^{k}\lambda(A_i+B_0)+\sum\limits_{\substack{i=k}}^{K}\lambda(A_i+B_1) \\ & \geqslant \sum\limits_{\substack{i=1}}^{k}\lambda(A_i)+k\lambda(B_0)+\lambda(A_k+B_1)+\sum\limits_{\substack{i=k+1}}^{K}\lambda(A_i)+(K-k)\lambda(B_1) \\ & \geqslant \lambda(A)+k\lambda(B_0)+(K-k)\lambda(B_1)+\lambda(A_k+B_1).
\end{align*}
Or encore une fois, $\lambda(A+B)=\lambda(A)+(K+\delta+\varepsilon)\lambda(B)$ par hypothèse, donc
$$\lambda(A_k+B_1)\leqslant (K-k)\lambda(B_0)+k\lambda(B_1)+\delta b+\varepsilon b,$$
d'où par \eqref{onpeututiliserAnnePablo}
\begin{equation}\label{Pour_B1}
\lambda(A_k+B_1)\leqslant \lambda(A_k)-f_k(\varepsilon )b+\lambda(B_1)+\varepsilon b< \lambda(A_k)+\lambda(B_1)+\rho_0.
\end{equation}
De même, on a les inégalités 
\begin{align*}
\lambda(A+B) & =\sum\limits_{\substack{i=1}}^{K+1}\lambda(S_i)\geqslant\sum\limits_{\substack{i=1}}^{k}\lambda(A_i+B_0)+\sum\limits_{\substack{i=k}}^{K}\lambda(A_i+B_1) \\ & \geqslant \sum\limits_{\substack{i=1}}^{k-1}\lambda(A_i)+(k-1)\lambda(B_0)+\lambda(A_k+B_0)+\sum\limits_{\substack{i=k}}^{K}\lambda(A_i)+(K-k+1)\lambda(B_1) \\ & \geqslant \lambda(A)+(k-1)\lambda(B_0)+(K-k+1)\lambda(B_1)+\lambda(A_k+B_0),
\end{align*}
qui entraînent par \eqref{onpeututiliserAnnePablo}
\begin{equation}\label{pour_B0}
\lambda(A_k+B_0)\leqslant \lambda(A_k)+\lambda(B_0)+\big(\varepsilon-f_k(\varepsilon )\big) b< \lambda(A_k)+\lambda(B_0)+\rho_0.
\end{equation}
Comme $\max\big( \lambda(B_0),\lambda(B_1) \big)\geqslant b/2>K\varepsilon b\geqslant \big(\varepsilon -f_k(\varepsilon )\big)b$, au moins l'une des inéquations \eqref{Pour_B1} et \eqref{pour_B0} nous permet d'utiliser le théorème \ref{Anne_Pablo}. Sans perdre en généralité, supposons donc que $\lambda(B_0)>K\varepsilon b$. Par \eqref{pour_B0} et d'après le théorème \ref{Anne_Pablo}, il existe $n_k\in\mathbb{N}^*$ tel que $n_k B_0\subseteq J'_k$, $n_k A_k\subseteq I'_k$, $\mu(J'_k)\leqslant\mu(B_0)+\big(\varepsilon-f_k(\varepsilon )\big) b$ et $\mu(I'_k)\leqslant\mu(A_k)+\big(\varepsilon-f_k(\varepsilon )\big) b$. Cependant, $B_0\subseteq\left[ 0,b_+\right]$ donc d'après le lemme \ref{tildes-presque-intervalles}, nécessairement $n_k=1$. Finalement, $A_k\subseteq I'_k$ et par définition de $f_k(\varepsilon )$, on a
$$\mu(I'_k)  \leqslant\mu(A_k)+\big(\varepsilon-f_k(\varepsilon )\big) b \leqslant (K-k)\lambda(B_0)+(k-1)\lambda(B_1)+\delta b+\varepsilon b.$$
\end{proof}
Ce lemme signifie que chaque morceau de $A$ ne peut pas être plus gros que le morceau de $A_0$ correspondant plus $\varepsilon b$. Il reste à montrer plus précisément que $A\subseteq a'+A_0+\left[0,\varepsilon b\right]$. Commençons par donner une majoration de $\lambda(A_k+B_0)$ pour tout $k\in\left\lbrace 1,...,K \right\rbrace$, ce sera l'objet du lemme \ref{min_Ak+B0} ci après.

Quel que soit $k\in\left\lbrace 1,...,K \right\rbrace$, on rappelle qu'on a posé $f_k(\varepsilon )b=\lambda(A_k)-\big( (K-k)\lambda(B_0)+(k-1)\lambda(B_1)+\delta b\big)$, on pose également $g_k(\varepsilon) b=\operatorname{diam}(A_k)-\lambda(A_k)$. Rappelons qu'on a
$$-(K-1)\varepsilon\leqslant f_k(\varepsilon )\leqslant\varepsilon.$$
Ainsi, par le lemme \ref{bonne_taille_des_morceaux}, on a 
\begin{equation}\label{encadrement_gk}
0\leqslant g_k(\varepsilon )\leqslant K\varepsilon.
\end{equation}

Ruzsa \cite{Ruzsa_minoration_AplusB} a prouvé le lemme suivant.
\begin{lemma}[Ruzsa]\label{lemmeRuzsa2}
Soient $E$ et $F$ deux ensembles bornés, non vides de réels. On a soit
$$ \lambda(E+F) \geqslant \lambda(E)+\operatorname{diam}(F)$$
soit
$$\lambda(E+F) \geqslant \frac{k+1}{k} \lambda(E)+\frac{k+1}{2} \lambda(F)$$
où $k$ est l'entier naturel défini par
$$ k=\max \left\{k^{\prime} \in \mathbb{N} \mid \exists x \in\left[0, \operatorname{diam}(F)\right[, \#\left\{n \in \mathbb{N} \mid x+n \operatorname{diam}(F) \in E\right\} \geqslant k^{\prime}\right\}. $$
\end{lemma}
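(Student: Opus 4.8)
\emph{Proof sketch.} The plan is to reduce everything to a count modulo $\operatorname{diam}(F)$, use the layer decomposition of Lebesgue measure that way, and then obtain the dichotomy from a whole family of ``interpolated'' lower bounds for $\lambda(E+F)$, which one combines by a one-line averaging. First I would normalise: after a dilation assume $\operatorname{diam}(F)=1$, and after translations assume $E\subseteq[0,+\infty[$ and $F\subseteq[0,1]$ with $\inf F=0$, $\sup F=1$; a routine approximation then lets us assume $F$ compact, so that $0,1\in F$ (in the present paper $F=B$ is already closed, so this last reduction can be skipped). For a bounded $G\subseteq\mathbb{R}_+$ and $j\geqslant1$ keep the notation $\widetilde{G}_j=\{x\in[0,1[\,:\,\#\{n\in\mathbb{N}:x+n\in G\}\geqslant j\}$, $K_G=\max\{j:\widetilde{G}_j\neq\varnothing\}$, and the identity $\lambda(G)=\sum_{j\geqslant1}\mu(\widetilde{G}_j)$ (cf. \eqref{lambda(E)=somme_mu(Ek)}). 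The integer $k$ of the statement is exactly $K_E$; set $K=K_E$, $S=E+F$, and write $\pi$ for reduction modulo $1$ (so $\mu(\pi(F))=\lambda(F)$ since $F\subseteq[0,1]$).

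The whole argument rests on two containments in $\mathbb{T}$. Since $0,1\in F$, adding $\{0,1\}$ to a residue occurring $j$ times in $E$ yields a residue occurring at least $j+1$ times in $S$, hence $\widetilde{E}_j\subseteq\widetilde{S}_{j+1}$ for every $j\geqslant1$. Translating instead the $j$ copies of such a residue by an arbitrary element of $F$ gives $\pi(\widetilde{E}_j+F)\subseteq\widetilde{S}_j$ for all $j\geqslant1$; and by the torus sumset inequality (Macbeath, or Kneser \cite{Kneser}) one has $\mu\big(\pi(\widetilde{E}_j+F)\big)\geqslant\min\big(1,\mu(\widetilde{E}_j)+\lambda(F)\big)$.

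Then comes the dichotomy. If $\mu(\widetilde{E}_1)+\lambda(F)\geqslant1$, the second containment at $j=1$ gives $\mu(\widetilde{S}_1)\geqslant1=\operatorname{diam}(F)$, while summing the first containment over $j=1,\dots,K$ gives $\sum_{j\geqslant2}\mu(\widetilde{S}_j)\geqslant\sum_{i=1}^{K}\mu(\widetilde{E}_i)=\lambda(E)$ (here $K_E=K$); hence $\lambda(E+F)=\sum_{j\geqslant1}\mu(\widetilde{S}_j)\geqslant\lambda(E)+\operatorname{diam}(F)$, the first alternative. Otherwise $\mu(\widetilde{E}_j)+\lambda(F)<1$ for every $j$, so $\mu(\widetilde{S}_j)\geqslant\mu(\widetilde{E}_j)+\lambda(F)$, and for each $m\in\{1,\dots,K\}$
\begin{align*}
\lambda(E+F) &\geqslant \sum_{j=1}^{m}\mu(\widetilde{S}_j)+\sum_{j=m+1}^{K+1}\mu(\widetilde{S}_j) \geqslant \sum_{j=1}^{m}\big(\mu(\widetilde{E}_j)+\lambda(F)\big)+\sum_{i=m}^{K}\mu(\widetilde{E}_i) \\
&= \lambda(E)+\mu(\widetilde{E}_m)+m\,\lambda(F),
\end{align*}
the first block using the second containment and the second block the first. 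Averaging these $K$ inequalities over $m$, and using $\sum_{m=1}^{K}\mu(\widetilde{E}_m)=\lambda(E)$ and $\sum_{m=1}^{K}m=\tfrac{K(K+1)}{2}$, yields $\lambda(E+F)\geqslant\tfrac{K+1}{K}\lambda(E)+\tfrac{K+1}{2}\lambda(F)$, the second alternative.

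The only genuinely delicate point is the normalisation step, i.e. checking that $K_E$, $\lambda(E)$, $\lambda(F)$, $\operatorname{diam}(F)$ behave correctly under the reduction to compact sets (and, as noted, this is sidestepped here since the lemma is applied only with $F=B$ closed). Everything else is bookkeeping; the one idea worth isolating is that one should \emph{not} try to use both containments inside a single layer --- there the best estimate is a maximum, which loses the constant $\tfrac{K+1}{2}$ --- but instead keep all the interpolated bounds $\lambda(E+F)\geqslant\lambda(E)+\mu(\widetilde{E}_m)+m\,\lambda(F)$ and observe that $\lambda(E+F)$, exceeding each of them, exceeds their average, which is exactly the target bound.
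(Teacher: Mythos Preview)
Your argument is correct. The paper does not reprove this lemma --- it is quoted from Ruzsa \cite{Ruzsa_minoration_AplusB} --- but it does indicate (just before \eqref{lemme_Ruzsa}) that the proof goes through the layer decomposition \eqref{lambda(E)=somme_mu(Ek)} modulo $\operatorname{diam}(F)$, which is precisely what you carry out. Your two containments $\widetilde{E}_j\subseteq\widetilde{S}_{j+1}$ and $\pi(\widetilde{E}_j+F)\subseteq\widetilde{S}_j$, the Macbeath/Kneser bound in $\mathbb{T}$, and the averaging over $m$ together reproduce Ruzsa's original scheme; this is essentially the same approach the paper is pointing to, just spelled out in full.
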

Quitte à appliquer $\chi$, on peut supposer que $b_+\geqslant b_-$.
\begin{lemma}\label{min_Ak+B0}
Quel que soit $k\in\left\lbrace 1,...,K \right\rbrace$, on a
$$\lambda(A_k+B_0)\geqslant \lambda(A_k)+\lambda(B_0)+g_k(\varepsilon)b.$$
\end{lemma}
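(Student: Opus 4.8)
The plan is to reduce the statement to one application of Ruzsa's Lemma~\ref{lemmeRuzsa2}, fed with the two summands in the right order. Observe first that, by the very definition $g_k(\varepsilon)b=\operatorname{diam}(A_k)-\lambda(A_k)$, the inequality to prove is nothing but
$$\lambda(A_k+B_0)\geqslant\operatorname{diam}(A_k)+\lambda(B_0).$$
This is exactly the shape of the favourable alternative in Lemma~\ref{lemmeRuzsa2} applied to the pair $(E,F)=(B_0,A_k)$, provided one puts $A_k$ in the \emph{second} slot, so that the diameter appearing there is $\operatorname{diam}(A_k)$ and matches the target. So I would apply that lemma with $E=B_0$, $F=A_k$: either it yields directly $\lambda(B_0+A_k)\geqslant\lambda(B_0)+\operatorname{diam}(A_k)$ and we are done, or it yields
$$\lambda(B_0+A_k)\geqslant\frac{k'+1}{k'}\lambda(B_0)+\frac{k'+1}{2}\lambda(A_k)$$
for the associated integer $k'\geqslant 1$ (I write $k'$ to avoid the clash with the index $k$).

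In this second alternative I would split on $k'$. If $k'=1$ the bound reads $\lambda(B_0+A_k)\geqslant 2\lambda(B_0)+\lambda(A_k)=\lambda(A_k)+\lambda(B_0)+\lambda(B_0)$; if $k'\geqslant 2$, using $\frac{k'+1}{k'}\geqslant 1$ and $\frac{k'+1}{2}\geqslant\frac32$, it gives $\lambda(B_0+A_k)\geqslant\lambda(B_0)+\frac32\lambda(A_k)=\lambda(A_k)+\lambda(B_0)+\frac12\lambda(A_k)$. Hence in both subcases it suffices to check the coarse bounds
$$\lambda(B_0)\geqslant g_k(\varepsilon)b\qquad\text{and}\qquad\tfrac12\lambda(A_k)\geqslant g_k(\varepsilon)b,$$
which, since $g_k(\varepsilon)\leqslant K\varepsilon$ by~\eqref{encadrement_gk}, both follow once one knows $\lambda(B_0)\geqslant K\varepsilon b$ and $\lambda(A_k)\geqslant 2K\varepsilon b$.

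Finally I would establish these two lower bounds from the standing hypotheses. For $\lambda(B_0)$: since $B\subseteq[0,b_+]\cup[1-b_-,1]$ one has $b\leqslant b_++b_-$, and as $b_+\geqslant b_-$ this forces $b_+\geqslant b/2$; then~\eqref{maj_b+} gives $\lambda(B_0)\geqslant b_+-\varepsilon b\geqslant b/2-\varepsilon b$, which is $\geqslant K\varepsilon b$ because $(K+1)\varepsilon<\tfrac12$ by~\eqref{hyp_1}. For $\lambda(A_k)$: by the definition of $f_k$ one has $\lambda(A_k)=(K-k)\lambda(B_0)+(k-1)\lambda(B_1)+\delta b+f_k(\varepsilon)b\geqslant\delta b-(K-1)\varepsilon b$ since $f_k(\varepsilon)\geqslant-(K-1)\varepsilon$; and~\eqref{hyp_1} gives $\delta>3K\varepsilon^{1/3}\geqslant 3K\varepsilon$, whence $\lambda(A_k)\geqslant 3K\varepsilon b-(K-1)\varepsilon b\geqslant 2K\varepsilon b$. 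This closes every case.

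Nothing in this argument is deep: the only genuine idea is ordering the summands in Ruzsa's lemma so that the diameter term becomes $\operatorname{diam}(A_k)$; the rest is the elementary verification that the ``bad'' Ruzsa alternative cannot be too weak. The main (mild) obstacle is simply bookkeeping — keeping straight which of the recalled floor-wise estimates ($f_k\geqslant-(K-1)\varepsilon$, $g_k\leqslant K\varepsilon$, and \eqref{maj_b+}) is used where, together with the places where the size hypothesis~\eqref{hyp_1} and the normalization $b_+\geqslant b_-$ enter.
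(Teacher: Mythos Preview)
Your proof is correct and follows essentially the same approach as the paper: apply Ruzsa's Lemma~\ref{lemmeRuzsa2} with $(E,F)=(B_0,A_k)$ so that the diameter term is $\operatorname{diam}(A_k)$, and verify that the second alternative also suffices using $g_k(\varepsilon)\leqslant K\varepsilon$ together with lower bounds on $\lambda(B_0)$ and $\lambda(A_k)$. The only cosmetic difference is that the paper splits into $k<K$ (forcing $k'=1$ via $\operatorname{diam}(A_k)>\operatorname{diam}(B_0)$) and $k=K$ (where $k'\geqslant 2$ is possible), whereas you treat all $k$ uniformly by splitting on $k'$ instead; your version is slightly more streamlined.
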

\begin{proof}
Soit $k\in\left\lbrace 1,...,K-1 \right\rbrace$, nous traiterons le cas $k=K$ à part. Par hypothèse $\delta>\varepsilon$, donc 
$$\operatorname{diam}(A_k)\geqslant\lambda(A_k)>\lambda(B_0)+\varepsilon b\geqslant \operatorname{diam}(B_0).$$
Ainsi
$$\max \left\{k^{\prime} \in \mathbb{N} \mid \exists x \in\left[0, \operatorname{diam}(A_k)\right[, \#\left\{n \in \mathbb{N} \mid x+n \operatorname{diam}(A_k) \in B_0\right\} \geqslant k^{\prime}\right\}=1,$$
et donc le lemme \ref{lemmeRuzsa2} implique
$$\lambda(A_k+B_0)\geqslant\min\big( \lambda(B_0)+\operatorname{diam}(A_k),\lambda(A_k)+2\lambda(B_0)\big).$$
Or $g_k(\varepsilon )\leqslant K\varepsilon$ (cf. \eqref{encadrement_gk}) et $K\varepsilon\leqslant\lambda(B_0)$ par hypothèse (et parce qu'on a supposé $b_+\geqslant b_-$), donc $\lambda(B_0)+\operatorname{diam}(A_k)\leqslant\lambda(A_k)+2\lambda(B_0)$, et
$$\lambda(A_k+B_0)\geqslant \lambda(B_0)+\operatorname{diam}(A_k)=\lambda(A_k)+\lambda(B_0)+g_k(\varepsilon )b.$$
Il ne reste qu'à traiter le cas $k=K$. Posons
$$l=\max \left\{k^{\prime} \in \mathbb{N} \mid \exists x \in\left[0, \operatorname{diam}(A_K)\right[, \#\left\{n \in \mathbb{N} \mid x+n \operatorname{diam}(A_K) \in B_0\right\} \geqslant k^{\prime}\right\}.$$
Si $l=1$ alors il suffit de reprendre le stratégie précédente. Supposons donc désormais que $l\geqslant 2$. Par le lemme \ref{lemmeRuzsa2}, on a
$$\lambda(A_K+B_0)\geqslant\min\big( \lambda(B_0)+\operatorname{diam}(A_k),\dfrac{l+1}{2}\lambda(A_K)+\dfrac{l+1}{l}\lambda(B_0)\big) .$$
Or comme $l\geqslant 2$, on a
$$\dfrac{l+1}{2}\lambda(A_K)+\dfrac{l+1}{l}\lambda(B_0)\geqslant\lambda(A_K)+\dfrac{1}{2}\lambda(A_K)+\lambda(B_0).$$
De plus 
$$\lambda(A_K)=(K-1)\lambda(B_1)+\delta b+f_K(\varepsilon)b\geqslant\delta b-K\varepsilon b,$$
et $\delta\geqslant (3K+2)\varepsilon$ par l'hypothèse \eqref{hyp_1}. Ainsi $\frac{1}{2}(\delta-K\varepsilon)b\geqslant (K+1)\varepsilon b$ et donc
\begin{align*}
\dfrac{1}{2}\lambda(A_K)\geqslant (K+1)\varepsilon b\geqslant g_K(\varepsilon)b.
\end{align*}
Finalement on a bien également
$$\lambda(A_K+B_0)\geqslant \lambda(A_K)+\lambda(B_0)+g_K(\varepsilon)b.$$
\end{proof}
Nous sommes désormais prêts à conclure. Comme $(A_i+B_0)\sqcup\big((A_{i-1}+B_1)\setminus(A_i+B_0)\big)\subseteq S_i$ pour tout $i\in\left\lbrace 2,...,K \right\rbrace$, on a
$$\lambda(S_i)\geqslant\lambda(A_i+B_0)+\lambda\big((A_{i-1}+B_1)\setminus(A_i+B_0)\big).$$
Ainsi par hypothèse, on a
\begin{align*}
\lambda(A)+(K+\delta+\varepsilon)b & =\lambda(A+B) =\sum\limits_{\substack{i=1}}^{K+1}\lambda(S_i) \\ & \geqslant\sum\limits_{\substack{i=1}}^{K}\lambda(A_i+B_0)+\sum\limits_{\substack{i=2}}^{K}\lambda\big((A_{i-1}+B_1)\setminus(A_i+B_0)\big)+\lambda(A_K+B_1),
\end{align*}
et donc par le lemme \ref{min_Ak+B0} et par définition de $f_K(\varepsilon)$
\begin{align*}
\lambda(A)+(K+ & \delta +\varepsilon)b \\ & \geqslant \sum\limits_{\substack{i=1}}^{K}\big(\lambda(A_i)+\lambda(B_0)+g_i(\varepsilon)b\big)+\sum\limits_{\substack{i=2}}^{K}\lambda\big((A_{i-1}+B_1)\setminus(A_i+B_0)\big) +\lambda(A_K)+\lambda(B_1) \\ & \geqslant \lambda(A)+(K+\delta)b+\sum\limits_{\substack{i=2}}^{K}\lambda\big((A_{i-1}+B_1)\setminus(A_i+B_0)\big) + \sum\limits_{\substack{i=1}}^{K}g_i(\varepsilon)b+f_K(\varepsilon)b.
\end{align*}
Ceci entraîne
$$0\leqslant\sum\limits_{\substack{i=2}}^{K}\lambda\big((A_{i-1}+B_1)\setminus(A_i+B_0)\big)\leqslant \varepsilon b-\sum\limits_{\substack{i=1}}^{K}g_i(\varepsilon)b-f_K(\varepsilon)b,$$
et donc il existe des réels positifs $\varepsilon^4_2,...,\varepsilon^4_K$ tels que
\begin{equation}\label{somme_eps4}
\sum\limits_{\substack{i=2}}^{K}\varepsilon^4_i=\varepsilon -\sum\limits_{\substack{i=1}}^{K}g_i(\varepsilon)-f_K(\varepsilon),
\end{equation}
et pour tout $i\in\left\lbrace 2,...,K \right\rbrace$
\begin{equation}\label{maj_par_eps4}
\lambda\big((A_{i-1}+B_1)\setminus(A_i+B_0)\big)\leqslant\varepsilon^4_i b.
\end{equation}
De plus pour tout $i\in\left\lbrace 2,...,K \right\rbrace$, on a
\begin{align*}
& (A_{i-1}+B_1)\cap\big(\left[\min(A_{i-1}+B_1) ,\min (A_i+B_0) \right]\sqcup\left[\max(A_i+B_0) ,\max (A_{i-1}+B_1) \right]\big) \\ & \ \ \ \subseteq \big((A_{i-1}+B_1)\setminus(A_i+B_0)\big) ,
\end{align*}
notons que les intervalles peuvent être vides. On peut donc définir deux réels positifs ou nuls $\varepsilon^<_i$ et $\varepsilon^>_i$ tels que
\begin{equation}\label{maj_eps-}
\lambda\big((A_{i-1}+B_1)\cap\left[\min(A_{i-1}+B_1) ,\min (A_i+B_0) \right]\big)=\varepsilon^<_i b,
\end{equation}
et
\begin{equation}\label{maj_eps+}
\lambda\big((A_{i-1}+B_1)\cap\left[\max(A_i+B_0) ,\max (A_{i-1}+B_1) \right]\big)=\varepsilon^>_i b.
\end{equation}
On a alors
\begin{equation}\label{eps-+eps+}
\varepsilon^<_i+\varepsilon^>_i\leqslant\varepsilon^4_i ,
\end{equation}
On rappelle que $\operatorname{diam}(A_{i-1})=\lambda(A_{i-1})+g_{i-1}(\varepsilon) b$. Comme $A_{i-1}+1-b_-\subseteq (A_{i-1}+B_1)$, $A_{i-1}+1\subseteq (A_{i-1}+B_1)$ et 
\begin{align*}
d\Big(\left[\min(A_{i-1}+B_1) ,\min (A_i+B_0) \right],\left[\max(A_i+B_0) ,\max (A_{i-1}+B_1) \right]\Big) & =\operatorname{diam}(A_{i}+B_0)\\ & >b_-,
\end{align*}
il existe deux réels positifs $g^<_{i-1}(\varepsilon)$ et $g^>_{i-1}(\varepsilon)$ tels que
\begin{align*}
& \lambda\big((A_{i-1}+1-b_-)\cap\left[\min(A_{i-1}+B_1) ,\min (A_i+B_0) \right]\big) \\ & \hspace*{6.5cm} \geqslant \lambda\big(\left[\min(A_{i-1}+B_1) ,\min (A_i+B_0) \right]\big)  - g^<_{i-1}(\varepsilon)b ,
\end{align*}
\begin{align*}
& \lambda\big((A_{i-1}+1)\cap\left[\max(A_i+B_0) ,\max (A_{i-1}+B_1) \right]\big) \\ & \hspace*{6.5cm} \geqslant \lambda\big(\left[\max(A_i+B_0) ,\max (A_{i-1}+B_1) \right]\big)  - g^>_{i-1}(\varepsilon)b ,
\end{align*}
et 
\begin{equation}\label{g<+g>=g}
g^<_{i-1}(\varepsilon)+g^>_{i-1}(\varepsilon)\leqslant g_{i-1}(\varepsilon).
\end{equation}
Or soit $\min(A_{i-1}+B_1) >\min (A_i+B_0)$ soit
\begin{align*}
\lambda\big(\left[\min(A_{i-1}+B_1) ,\min (A_i+B_0) \right]\big) & = \min (A_i+B_0)-\min(A_{i-1}+B_1) \\ & = \min A_i-(\min A_{i-1}+1-b_-),
\end{align*}
et donc par \eqref{maj_eps-}
$$\min A_i-(\min A_{i-1}+1-b_-)\leqslant \varepsilon^<_i b+g^<_{i-1}(\varepsilon)b.$$
Finalement comme $\varepsilon^<_i b$ et $g^<_{i-1}(\varepsilon)b$ sont positifs, dans tous les cas on a
\begin{equation}\label{min_etage_dessus}
\min A_{i-1}\geqslant \min A_i-1+b_--\varepsilon^<_i b-g^<_{i-1}(\varepsilon)b.
\end{equation}
De la même manière, par \eqref{maj_eps+} on établit l'inégalité
\begin{equation}\label{max_etage_dessus}
\max A_{i-1}\leqslant \max A_i-1+b_++\varepsilon^>_i b+g^>_{i-1}(\varepsilon)b.
\end{equation}
Ces deux dernières inégalités étaient le maillon manquant afin de terminer cette preuve. En effet, en posant
$$a_0=\min A_K-\sum\limits_{\substack{i=2}}^{K}\varepsilon^<_ib-\sum\limits_{\substack{i=1}}^{K-1}g^<_i(\varepsilon)b,$$
on a $\operatorname{diam}(A_{K})=(K-1)\lambda(B_1)+\delta b +f_K(\varepsilon)b+g_K(\varepsilon)b$ et donc par \eqref{eps-+eps+}, \eqref{g<+g>=g} puis \eqref{somme_eps4}
\begin{align*}
\operatorname{diam}(A_{K})+\lambda(\left[a_0,\min A_K\right]) & =\operatorname{diam}(A_{K})+\sum\limits_{\substack{i=2}}^{K}\varepsilon^<_ib+\sum\limits_{\substack{i=1}}^{K-1}g^<_i(\varepsilon)b \\ & \leqslant \operatorname{diam}(A_{K})+\sum\limits_{\substack{i=2}}^{K}\varepsilon^4_ib+\sum\limits_{\substack{i=1}}^{K-1}g_i(\varepsilon)b \\ & \leqslant \operatorname{diam}(A_{K})+\varepsilon b-(f_K(\varepsilon)b+g_K(\varepsilon)b) \\ & \leqslant (K-1)\lambda(B_1)+\delta b +\varepsilon b.
\end{align*}
Ainsi
$$A_K\subseteq a_0+\left[K-1-(K-1)b_-,K-1+\delta b+\varepsilon b\right],$$
et donc $A_K\subseteq a_0+A'+\left[ 0,\varepsilon b\right].$ De même, par \eqref{min_etage_dessus} et \eqref{max_etage_dessus}, quel que soit $k\in\left\lbrace 1,...,K-1 \right\rbrace$ on a
$$A_k\subseteq a_0+A'+\left[ 0,\varepsilon b\right],$$
et donc finalement $A$ est inclus dans un translaté de $A'+\left[ 0,\varepsilon b\right]$, ce qui termine la preuve du théorème \ref{main_result}.

\section{Remarques}

Dans le cas symétrique $A=B$ le théorème \ref{main_result} rejoint la conclusion d'un précédent résultat de Candela et de Roton (\cite{Anne_Pablo} théorème 4.1). 
\begin{theorem}[Candela et de Roton]
Soit $0<\varepsilon<1/3$ tel que $\varepsilon<10^{-4}$. Soit $A\subset\left[0,1\right]$ un ensemble fermé, de diamètre $1$, de mesure non nulle et tel que $\lambda(A+A)\leqslant (3+\varepsilon )\lambda(A)$ et $\lambda(A)<\frac{1}{2(1+\varepsilon)}$. Alors $A\mod 1$ est inclus dans un intervalle $I\subset\mathbb{T}$ tel que $\mu(I)\leqslant (1+\varepsilon)\lambda(A)$.
\end{theorem}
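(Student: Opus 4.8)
The plan is to reduce the statement to the torus and then run the symmetric specialisation of the machinery of this paper. Normalising, we may assume $A\subseteq[0,1]$ with $\min A=0$ and $\max A=1$; as $A$ is closed this forces $0,1\in A$, so the only identification the projection $\pi:\mathbb{R}\to\mathbb{T}$ makes on $A$ is $0\sim 1$, a set of measure zero, whence $\mu(\pi(A))=\lambda\big(A\cap[0,1)\big)=\lambda(A)$. Writing $\bar A=\pi(A)$, it suffices to exhibit an interval $I\subset\mathbb{T}$ with $\bar A\subseteq I$ and $\mu(I)\leqslant(1+\varepsilon)\lambda(A)$. One cannot simply quote Theorem~\ref{main_result}: in the symmetric case $\lambda(A)/\lambda(A)=1$ forces $K=2$ and $\delta=0$, and then hypothesis~\eqref{hyp_1} degenerates to $\varepsilon<0$. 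So the symmetric case must be handled on its own — which is also why a far wider range for $\varepsilon$ becomes available, since the astronomically small $\rho_0$ no longer enters.

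First I would invoke Ruzsa's Theorem~\ref{thm_Ruzsa}. With $K=2$, $\delta=0$, $D_A=1$, and $\lambda(A)<\frac{1}{2(1+\varepsilon)}<\frac12$ (so $2\lambda(A)<1$), it gives $\lambda(A+A)\geqslant 3\lambda(A)$; combined with the hypothesis $\lambda(A+A)\leqslant(3+\varepsilon)\lambda(A)$ we may write $\lambda(A+A)=(3+\varepsilon')\lambda(A)$ for some $0\leqslant\varepsilon'\leqslant\varepsilon$. If $\varepsilon'=0$ we are exactly in the situation of de Roton's Theorem~\ref{thm_Anne} (with $K=2$, $\delta=0$, $A=B$, and $3\lambda(A)<\lambda(A)+1$), so $A$ is a translate of $[0,b_+]\cup[1-b_-,1]$ with $b_++b_-=\lambda(A)$; projecting mod $1$, $\bar A$ is a translate of an interval of length $\lambda(A)\leqslant(1+\varepsilon)\lambda(A)$, and we are done. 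So assume $\varepsilon'>0$.

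For $\varepsilon'>0$ the plan is to carry out the first stage of the proof of Theorem~\ref{main_result} with $K=2$, $\delta=0$. One shows $K_A=2$ (off a null set): the bound $\lambda(A+A)\geqslant\lambda(A)+K_A\lambda(A)$ forces $K_A\leqslant 2$, while the presence of $0,1\in A$ together with the quantitative form of Ruzsa's inequality~\eqref{lemme_Ruzsa} — reproduced as in the proof of~\eqref{K=K_A} — pins it down. One then records $\mu(\tilde A_2)=O(\varepsilon'\lambda(A))$ and $\mu(\tilde A_1)=\lambda(A)+O(\varepsilon'\lambda(A))$, and applies a structure statement in $\mathbb{T}$ to the near-critical pair coming from $\tilde A_k+\bar A\subseteq\tilde S_k$ (the inclusion $0,1\in B=A$ making the relevant doubling small), concluding that $\bar A$ lies in $m$ equally spaced short arcs which it fills up to an error $O(\varepsilon'\lambda(A))$, for some integer $m\geqslant1$. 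The remaining, and hardest, step is to show $m=1$: if $m\geqslant2$ then $\bar A$, hence $A\subseteq[0,1]$, would be spread over several well-separated arcs, which contradicts $\operatorname{diam}A=1$ together with the doubling being only $3+\varepsilon'$ — this is the analogue of Lemma~\ref{tildes-presque-intervalles}, but here it can be pushed through by a direct real-line counting argument, which is precisely what allows the mild constraint $\varepsilon<10^{-4}$ in place of $\varepsilon<\rho_0/\lambda(A)$ and why the sharp torus theorem~\ref{Anne_Pablo} is not needed. Once $m=1$, $\bar A$ sits inside a single arc $I$ with $\mu(I)\leqslant(1+\varepsilon')\lambda(A)\leqslant(1+\varepsilon)\lambda(A)$, as required. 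The main obstacle is thus the $m=1$ step combined with the fact that the $\delta=0$ degeneracy prevents importing the estimates of Theorem~\ref{main_result} verbatim, so the circle-structure input has to be supplied by a self-contained, sharper argument.
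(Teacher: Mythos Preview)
The paper does not prove this theorem. It is quoted from \cite{Anne_Pablo} (Candela--de Roton, Theorem~4.1 there) in the final ``Remarques'' section, solely to observe that in the symmetric situation $A=B$ the conclusion of the main Theorem~\ref{main_result} recovers what was already known --- with the far better range $\varepsilon<10^{-4}$ that Candela--de Roton obtain. There is therefore no ``paper's own proof'' to compare your proposal with.

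As for your sketch itself: you correctly notice that Theorem~\ref{main_result} cannot be invoked because $A=B$ forces $K=2$, $\delta=0$, so hypothesis~\eqref{hyp_1} collapses. But your plan to rerun the first stage of its proof in this degenerate setting is more problematic than you indicate. With $A\subseteq[0,1]$ and $D_B=1$, the set $\tilde A_2$ reduces to $\{0\}$, so $\mu(\tilde A_2)=0$; this means $I_K$ is a single point and the entire apparatus built around $\tilde A_K$, $I_K$, $\mathcal L_A$, Lemma~\ref{mu(A_K)}, Corollary~\ref{mu(I_K)}, and the later $m=1$ argument simply does not get off the ground. Your proposal hides this under the phrase ``a direct real-line counting argument'' for the $m=1$ step, but that is precisely the whole content of the theorem, and you have not supplied it. In short: the reduction to $\varepsilon'\in[0,\varepsilon]$ via Ruzsa is fine, the $\varepsilon'=0$ case via Theorem~\ref{thm_Anne} is fine, but for $\varepsilon'>0$ the argument is not a proof --- it is a pointer to where a genuinely different argument (the one in \cite{Anne_Pablo}) is needed.
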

Même en dehors du cas symétrique, la conclusion du théorème \ref{main_result} est en fait optimale en un certain sens car si nous prenons $B=\left[0,\lambda(B)\right]\cup\left\lbrace 1 \right\rbrace$ et pour tout $i\in\left\lbrace 1,...,K \right\rbrace$ et tout $0\leqslant\varepsilon'\leqslant\varepsilon$,
$$A_i(\varepsilon')=A_0\cup \Big( i-1+(K-i)b+\delta b+\left[ 0 ,\varepsilon' b\right]\Big)\cup \Big( (i-1)+\left[ -(\varepsilon-\varepsilon') b,0\right]\Big),$$
où on rappelle que $b=\lambda(B)$ et
$$A_0=\bigcup\limits_{\substack{k=1}}^{K}\left[ k-1 ,k-1+(K-k)b+\delta b\right],$$
chaque couple d'ensembles $\big( A_i(\varepsilon'),B\big)$ respecte les conditions du théorème \ref{main_result} (en particulier on a bien $\lambda(A_i(\varepsilon')+B)=(K+\delta+\varepsilon)b$) et le plus petit ensemble (au sens de l'inclusion) contenant tous les $A_i(\varepsilon')$ à translation près est bien $A_0+\left[ 0,\varepsilon b\right]$.

Il est toutefois possible d'améliorer le théorème \ref{main_result} en affaiblissant les hypothèses sur $\varepsilon$. En ce sens, pour plus de lisibilité nous considérons l'hypothèse $\big(K+\delta+ (K^2\log K+12)\varepsilon\big)\lambda(B)<D_B$ alors que dans la démonstration, nous n'utilisons que l'hypothèse plus faible 
$$\big(K+\delta+ (K^2\log (K)-K\big(K(1+\log 4)-\log K-7+\log 4\big)+8)\varepsilon\big)\lambda(B)<D_B.$$

\newpage
\def\refname{R\'ef\'erences}
\bibliographystyle{plain}
\bibliography{/Users/robin/Desktop/mabiblio.bib}
\end{document}